\newif\ifdebug                                                      %
\newif\ifinfootnote
\let\footnoteasusual\footnote
\renewcommand{\footnote}[1]
{\infootnotetrue\footnoteasusual{#1}\infootnotefalse}
\newcommand{\printname}[1]
{\ifmmode{ \smash{ \raisebox{5pt}{\text{\tiny{#1}}} } }
 \else   {\ifinfootnote \smash{\raisebox{0pt}{\tiny{#1}}}
             \else { \marginpar{
                     \smash{ \makebox[0pt]{\raisebox{-12pt}{\tiny{#1}}} }
                               } } \fi} \fi}
\let\labelasusual\label
\newcommand{\labell}[1]
{\ifdebug {\labelasusual{#1}\printname{{#1}}} \else {\labelasusual{#1}} \fi}
\numberwithin{equation}{section}
\newtheorem {theorem}[equation]         {Theorem}
\newtheorem {lemma}[equation]           {Lemma}
\newtheorem {claim*}                    {Claim}
\newtheorem {corollary} [equation]      {Corollary}
\newtheorem {proposition}  [equation]   {Proposition}
\theoremstyle{definition}
\newtheorem{definition}[equation]{Definition}
\theoremstyle{remark}
\newtheorem{remark}[equation]{Remark}
\newtheorem*{remark*}{Remark}
\newtheorem{example}[equation]{Example}
\newtheorem{question}[equation]{Question}
\newcommand{\CC}{\mathbb{C}}
\newcommand{\tr}{\mathrm{Tr}}
\newcommand{\spec}{\mathrm{Spec}}
\newcommand{\hess}{\mathrm{Hess}}
\newcommand{\flag}{\mathrm{Flags}}
\newcommand{\hgt}{\mathrm{ht}}
\newcommand{\init}{\mathrm{in}}
\newcommand{\ass}{\mathrm{Ass}}
\title[Gr\"obner bases for regular nilpotent Hessenberg varieties and toric degenerations]
{Geometric vertex decomposition, Gr\"obner bases, and Frobenius splittings for regular nilpotent Hessenberg varieties}
\author{Sergio Da Silva}
\address{Dept.\ of Mathematics and Economics, Virginia State University, 
1 Hayden Drive, 
Petersburg, Virginia 23806, USA}
\email{sdasilva@vsu.edu, smd322@cornell.edu}
\author{Megumi Harada}
\address{Dept.\ of Mathematics and Statistics, McMaster University, 
1280 Main Street West, 
Hamilton, Ontario L8S 4K1, Canada}
\email{Megumi.Harada@math.mcmaster.ca}
\date{\today}
\keywords{Hessenberg variety, flag variety, geometric vertex decomposition, Gr\"obner bases, Frobenius splitting.}
\subjclass[2010]{Primary: 14M17, 13P10; Secondary: 14M06 }
\begin{document}

\maketitle

\begin{abstract}
We initiate a study of the Gr\"obner geometry of local defining ideals of Hessenberg varieties by studying the special case of regular nilpotent Hessenberg varieties in Lie type A, and focusing on the affine coordinate chart on $\flag(\mathbb{C}^n) \cong GL_n(\mathbb{C})/B$ corresponding to the longest element $w_0$ of the Weyl group $S_n$ of $GL_n(\mathbb{C})$. Our main results are as follows. Let $h$ be an indecomposable Hessenberg function. We prove that the local defining ideal $I_{w_0,h}$ in the $w_0$-chart of the regular nilpotent Hessenberg variety $\hess(\mathsf{N},h)$ associated to $h$ has a Gr\"obner basis with respect to a suitably chosen monomial order. Our Gr\"obner basis consists of a collection $\{f^{w_0}_{k,\ell}\}$ of generators of $I_{w_0,h}$ obtained by Abe, DeDieu, Galetto, and the second author. We also prove that $I_{w_0,h}$ is geometrically vertex decomposable in the sense of Klein and Rajchgot (building on work of Knutson, Miller, and Yong). We give two distinct proofs of the above results. We make this unconventional choice of exposition because our first proof introduces and utilizes a notion of a \textbf{triangular complete intersection} which is of independent interest, while our second proof using liaison theory is more likely to be generalizable to the general $w$-charts for $w \neq w_0$.  Finally, using our Gr\"obner analysis of the $f^{w_0}_{k,\ell}$ above and for $p>0$ any prime, we construct an explicit Frobenius splitting of the $w_0$-chart of $\flag(\mathbb{C}^n)$ which simultaneously compatibly splits all the local defining ideals of $I_{w_0,h}$, as $h$ ranges over the set of indecomposable Hessenberg functions. This last result is a local Hessenberg analogue of a classical result known for $\flag(\mathbb{C}^n)$ and the collection of Schubert and opposite Schubert varieties in $\flag(\mathbb{C}^n)$. 
\end{abstract}

\section{Introduction}
\labell{sec:intro}

Hessenberg varieties are subvarieties of the full flag variety $\flag(\mathbb{C}^n)$, and the investigation of their properties lies in the fruitful intersection of algebraic geometry, representation theory, and combinatorics, among other research areas.\footnote{In this manuscript, we restrict to the case of Lie type $A$, i.e., when the flag variety corresponds to the group $GL_n(\mathbb{C})$ (or $SL_n(\mathbb{C})$). Much can be said about other Lie types, but we do not delve into that here.} First introduced to the algebraic geometry community by De Mari, Procesi, and Shayman \cite{DeMPS}, they have recently garnered attention due in part to their connection to the well-known and unresolved Stanley-Stembridge conjecture in combinatorics (see e.g. \cite{HarPre} for a leisurely account of some of the history). However, there are many other reasons aside from the Stanley-Stembridge conjecture that Hessenberg varieties are of interest; for instance, they arise in the study of quantum cohomology of flag varieties, they are generalizations of the Springer fibers which arise in geometric representation theory, total spaces of families of suitable Hessenberg varieties support interesting integrable systems \cite{AbeCrooks}, and the study of some of their cohomology rings suggests that there is a rich Hessenberg analogue to the theory of Schubert calculus on $\flag(\mathbb{C}^n)$ \cite{HarTym}. 

It is this last point of view of Schubert calculus, or more specifically the geometry of Schubert varieties, which motivates the present paper.  Specifically, in this manuscript we study \textbf{local patches of Hessenberg varieties} - i.e. intersections of these Hessenberg varieties with certain choices of affine Zariski-open subsets of $\flag(\mathbb{C}^n)$. 
To be more specific, the main results of this manuscript concern the \textbf{local Hessenberg patch ideal}, denoted $I_{w_0,h}$, for the special case of a \textbf{regular nilpotent Hessenberg variety} (to be defined in Section~\ref{subsec: reg nilp Hess}), intersected with the affine coordinate chart $w_0U^-B$ of $\flag(\mathbb{C}^n) \cong GL_n(\mathbb{C})/B$ centered at the permutation flag corresponding to the maximal element in $S_n$. (It is possible to consider the intersection with the chart $wU^-B$ for arbitrary permutations $w$, but we mostly restrict to the longest permutation $w_0$ in this manuscript. Details are in Section~\ref{sec:background}.)

The analogous study of local patches of Schubert varieties is a classical topic and a great deal is known about the corresponding (local defining) ideals, from which properties of Schubert varieties can be deduced. For example, on these local patches, Schubert varieties can be degenerated to a square-free monomial ideal which is associated to a subword complex, which gives another proof that Schubert varieties are Cohen-Macaulay \cite{KM}. We view the present manuscript as a first step in this direction in the context of Hessenberg varieties, in the sense that we initiate a study of the Gr\"obner geometry associated to local defining ideals of local patches of Hessenberg varieties. This being said, we emphasize that we are not the first to consider these local patches; Insko and Yong studied the special case of the Peterson variety using such local patches in \cite{InskoYong}, and in some cases, the results of \cite{ADGH} give a set of generators for such ideals, which can be used to show reducedness. (Further details are in Section~\ref{sec:background}.)

Another motivation for the present paper is to introduce the theory of geometric vertex decomposability (GVD) to the study of the geometry of Hessenberg varieties. Geometric vertex decompositions were first defined and studied by Knutson, Miller, and Yong in their influential work \cite{KMY}, where they used their new theory to study Schubert determinantal ideals. More recently, the theory of geometric vertex decomposability, the definition of which is inherently inductive (recursive), has been linked to liaison theory and has been useful for understanding when a variety is in the Gorenstein liaison class of a complete intersections (i.e. ``glicci''). Briefly, geometric vertex decompositions can be a powerful tool for demonstrating the glicci property \cite{KleRaj}. 

This theory is relevant in our Hessenberg context because being GVD gives a convenient \emph{inductive} set-up for proving that a certain set of polynomials is a Gr\"obner basis. Indeed, in Section~\ref{sec: alternative}, we use these ideas to show that a certain set of generators form a Gr\"obner basis for local patch ideals for regular nilpotent Hessenberg varieties in the $w_0$-patch.

On the other hand, for the special case of the $w_0$-patch, which is our main focus in this manuscript, it turns out there is a way to build a Gr\"obner basis without using liaison theory which is both simpler and more general. By this we mean that our argument is valid whenever one has an ideal $I=\langle f_1, \ldots, f_n\rangle$ which is \textbf{a triangular complete intersection} in a sense we make precise in Definition~\ref{definition: lci triangular}.  The reader may then ask why we bother with the GVD theory;  the answer is that our simple argument for triangular complete intersections will not apply to the $w$-charts for $w \neq w_0$. Hence we have opted to present the argument using triangular complete intersections in Section~\ref{sec: background GVD} and Section~\ref{sec: fij} below, but we also present an alternative proof using geometric vertex decomposition in Section~\ref{sec: alternative}. At present, we expect that a full analysis of the general $w \neq w_0$ will need to rely on the GVD techniques.

We now turn to a discussion of our main results. Precise statements are in Corollary~\ref{cor: fkl are a GB} and Corollary~\ref{cor: Hess patch is GVD}. A \textbf{Hessenberg function} $h: [n] \to [n]$ is a function satisfying $h(i) \geq i$ for all $i$ and $h(1) \leq h(2) \leq \cdots \leq h(n)$. We refer to Definition~\ref{def: g v decomposable} for a precise definition of geometric vertex decomposability. The polynomials $f_{k,\ell}^{w_0}$ are defined in Definition~\ref{def: fkl}. 

\medskip

\noindent \textbf{Theorem.} Let $n$ be a positive integer with $n \geq 3$. Let $h: [n] \to [n]$ be a Hessenberg function satisfying $h(i) \geq i+1$ for all $1 \leq i\leq n-1$. Then the set of polynomials $\{f_{k,\ell}^{w_0}\}$ form a Gr\"obner basis for the Hessenberg patch ideal $I_{w_0,h}$ of $\hess(\mathsf{N},h)$ in the $w_0$-coordinate chart with respect to an appropriately chosen monomial order, and its initial ideal is an ideal of indeterminates. Furthermore, $I_{w_0,h}$ is geometrically vertex decomposable.

\medskip

We remark that, as mentioned above, we obtain the above theorem by first proving analogous results in the more general setting of triangular complete intersections. We expect these to also be of independent interest.

 In addition to our Gr\"obner basis results above, we initiate a study of Frobenius splittings in the context of Hessenberg varieties in Section~\ref{sec: frobenius}. It is known that there exists a Frobenius splitting of the flag variety $\flag(\mathbb{C}^n)$ which is compatible in a suitable sense with all Schubert and opposite Schubert varieties \cite{Brion-Kumar}; this has a geometric interpretation in terms of the anticanonical divisor class of $\flag(\mathbb{C}^n)$. Thus, it is natural to ask whether there is a Hessenberg analogue of this theory, namely, we may ask whether there exists a Frobenius splitting of $\flag(\mathbb{C}^n)$ which simultaneously compatibly splits all regular nilpotent Hessenberg varieties for (indecomposable) Hessenberg functions. It is known that a Frobenius splitting on an ambient variety restricts to a Frobenius splitting on an open dense affine coordinate chart, so if such a statement were true, then it would also hold true on a coordinate chart.  Our last main result in this manuscript is to show that for a specific and explicit choice of Frobenius splitting on the $w_0$-coordinate chart, this necessary condition holds. A precise version of what follows is contained in Corollary~\ref{cor: Hessenberg_split} and Corollary~\ref{cor: simultaneous split}. 

\medskip
\noindent \textbf{Theorem.} Let $p>0$ be a prime. There is an explicit Frobenius splitting $\varphi$ of the coordinate ring of the $w_0$-chart of $\flag(\mathbb{C}^n)$ with respect to which the local Hessenberg patch ideal $I_{w_0,h,p}$ is compatibly split. In particular, there is a partially ordered set (ordered by inclusion) of ideals $\{I_{w_0,h,p}\}$, indexed by the set of (indecomposable) Hessenberg functions $h$, which are simultaneously compatibly split with respect to $\varphi$.

\medskip
Again, we remark that our approach to the proof of the above theorem is to first prove an analogous general result for triangular complete intersections. 

Finally, in Section~\ref{sec: alternative}, we give the alternate proof of the Gr\"obner basis and GVD result for $I_{w_0,h}$ using liaison theory instead of relying on the specific setting of triangular complete intersections.

\medskip

Much of the discussion above has focused exclusively on the $w_0$-chart. It is natural to ask what happens to the other coordinate charts for $w \neq w_0$. We have some computational evidence that suggests that, for $w \neq w_0$, the restriction of the local Hessenberg patch ideals $I_{w,h}$ to the coordinates corresponding to the Schubert cell has computationally convenient properties. We also have preliminary evidence suggesting that there are conditions on $h$ and $w$ (and an appopriate choice of monomial order) such that the initial ideal of $I_{w,h}$ possesses a square-free monomial degeneration. (See also Remark~\ref{remark: other permutations}.) We expect to explore these questions further in future work.

\medskip

\noindent \textbf{Acknowledgements.} 
The first author was supported in part by a Natural Sciences and Engineering Research Council Postdoctoral Fellowship of Canada. 
The second author was supported in part by the Natural Sciences and Engineering Research Council Discovery Grant 2019-06567 and a Canada Research Chair Tier 2 Award. 
Both authors express gratitude to Patricia Klein and Jenna Rajchgot for many useful conversations and to Mike Cummings for his patient Macaulay 2 computations and for providing the examples which form the basis of Remark~\ref{remark: other permutations}. Finally, we thank the anonymous referee for many helpful substantive comments which significantly improved the paper.

\section{Background} 
\label{sec:background} 

In this section we briefly recall some background and notation necessary for the discussion that follows. 

\subsection{The flag variety $\flag(\mathbb{C}^n)$}\label{subsec: flag}
\label{subsec:flag background}

The full flag variety $\flag(\mathbb{C}^n)$ is the set of nested sequences of subspaces
$$
\flag(\mathbb{C}^n) := 
\{ V_{\bullet} = (0 \subsetneq V_1 \subseteq V_2 \subsetneq \cdots \subsetneq V_{n-1} \subsetneq V_n = \mathbb{C}^n) \, \mid \, \mathrm{dim}_{\mathbb{C}}(V_i) = i \}
$$ 
in $\mathbb{C}^n$. By representing $V_{\bullet}$ by an $n \times n$ matrix (whose leftmost $i$ many columns span $V_i$), we may identify $\flag(\mathbb{C}^n)$ as the homogeneous space $GL_n(\mathbb{C})/B$. Here, $B$ is the Borel subgroup of $GL_n(\mathbb{C})$ consisting of upper-triangular invertible matrices. Let $U^-$ denote the subgroup  in $GL_n(\mathbb{C})$ consisting of lower-triangular matrices with $1$'s along the diagonal.  Then 
$U^{-}B \subset GL_n(\mathbb{C})/B$ is the set of left cosets $uB$ with $u \in U^-$.  This is an open dense subset of $GL_n(\mathbb{C})/B \cong \flag(\mathbb{C}^n)$ and can be profitably viewed as a ``coordinate chart'' on $GL_n(\mathbb{C})/B$. 

Let $S_n$ denote the symmetric group on $n$ letters and $w \in S_n$ a permutation. We can identify $S_n$ with the set of permutation flags in $\flag(\mathbb{C}^n)$ and view it as a subgroup of $GL_n(\mathbb{C})$ by taking $w$ to the associated permutation matrix. By abuse of notation we will often denote by the same $w$ the element in $S_n$, its associated flag, and its associated permutation matrix.  Translating the coordinate chart $U^-B$ by multiplication by $w$ on the left, we can define 
\begin{equation}\labell{eq: def Nw}
\mathcal{N}_w := wU^{-}B \subseteq GL_n(\mathbb{C})/B,
\end{equation}
which is an open cell (i.e., a coordinate chart) in $GL_n(\mathbb{C})/B$ containing the permutation flag $w$.  (Note that our $\mathcal{N}_w$ are translates of the standard open dense Bruhat cell $Bw_0B/B$ in $GL_n(\CC)/B$.) 
It is well-known that $\flag(\mathbb{C}^n) \cong GL_n(\mathbb{C})/B$ can be covered by these $n!$ many coordinate charts, each centered around a permutation flag $w$. 

In fact, each $\mathcal{N}_w$ is isomorphic to a complex affine space of dimension $\frac{n(n-1)}{2}$. To see this, let 
\begin{equation}\label{eq: def M}
M := 
\begin{bmatrix} 
1 &  & & & \\
\star & 1 & & & \\
\vdots & \vdots & \ddots & & \\
\star & \star & \cdots & 1 & \\
\star & \star & \cdots & \star & 1 
\end{bmatrix} 
\end{equation}
denote an element in $U^-$ where the $\star$'s represent arbitrary complex numbers, and consider the map $M \mapsto wMB \in GL_n(\mathbb{C})/B$. Since $U^-\cap B =\{e\}$, it is not difficult to check that this defines an embedding $U^- \cong \mathbb{A}^{n(n-1)/2} \stackrel{\cong}{\rightarrow} \mathcal{N}_w \subset \flag(\mathbb{C}^n)$ parametrizing the coordinate chart $\mathcal{N}_w$. A point in $\mathcal{N}_w $ can be uniquely identified with the $w$-translate of an element $M$ in $U^-$, and thus a point in $\mathcal{N}_w$ is uniquely determined by a matrix $wM=(x_{i,j})$ satisfying
\[
x_{w(j),j}=1 \, \, \textup{ for } \, \, 
j \in [n] \, \, \textup{ and  } \, \,  x_{w(i),j}=0 \, \, \textup{ for } \, \, 
i, j \in [n], j>i.
\]
Thus the coordinate ring of $\mathcal{N}_w$, which we denote by $\mathbb{C}[\mathbf{x}_w]$, is isomorphic to the polynomial ring in the $n(n-1)/2$ variables not specified by the above relations. 

For instance, let $w_0$ be the Bruhat-longest element in $S_n$, 
so in one-line notation, 
 $$
 w_0 = [n \,\,  n-1 \,\,  n-2 \, \cdots \,  2\, \, 1].
 $$
Then for any positive integer $n$, the coordinate chart $\mathcal{N}_{w_0}$
can be parametrized by matrices of the form 
\begin{equation}\label{eq: generic w0 chart}
w_0M = 
\begin{bmatrix}
x_{1,1} & x_{1,2} & \cdots & x_{1,n-2} & x_{1,n-1} & 1 \\ 
x_{2,1} & x_{2,2} & \cdots & x_{2,n-2} & 1 & 0 \\
\vdots  &         &        &        &       \vdots & \vdots \\
x_{n-1,1}& 1 &    \cdots   & & 0 & 0 \\ 
1 & 0 &            \cdots  & &  0 & 0 &
\end{bmatrix}, 
\end{equation}
where we think of the variables $x_{i,j}$ in the matrix above as indeterminates (i.e., coordinates), taking values in $\mathbb{C}$. For a different choice of permutation $w$, these indeterminates will be located at different places within the matrix, but the idea is similar. 

\subsection{Regular nilpotent Hessenberg varieties}\label{subsec: reg nilp Hess}

Our notation and conventions largely follow the discussion in \cite{ADGH} so we will be brief. 
Let $n$ be a positive integer. We call a function $h: [n] := \{1,2,\cdots,n\} \to [n] := \{1,2,\cdots, n\}$ a \textbf{Hessenberg function} if it satisfies the conditions $h(i) \geq i$ for all $i$ and $h(i+1) \geq h(i)$ for $1 \leq i \leq n-1$. We say that a Hessenberg function is \textbf{indecomposable} if $h(i) \geq i+1$ for all $1 \leq i\leq n-1$. Let $A:\mathbb{C}^n\rightarrow \mathbb{C}^n$ be a linear operator and let $h: [n] \rightarrow [n]$ be an indecomposable Hessenberg function. Then we define the \textbf{Hessenberg variety associated to $A$ and $h$} to be the closed subvariety of $\flag(\mathbb{C}^n)$ given by 
\begin{equation}\label{eq: def Hess}
\hess(A,h):=\{ V_{\bullet} = (V_i) \in\flag(\mathbb{C}^n)\, \mid \, AV_i\subseteq V_{h(i)},\forall i\} \subset \flag(\mathbb{C}^n).   \end{equation} 
(Hessenberg varieties can be defined in more generality, in arbitrary Lie types, but for simplicity we restrict to Lie type A in this paper.) 

In this manuscript, we further focus on the special case when $A$ is a regular nilpotent operator. Specifically, define 
\begin{equation}\label{eq: def N} 
\mathsf{N} := 
\begin{bmatrix} 
0 & 1 & 0 & 0 & \cdots & 0 & 0  \\
0 & 0 & 1 & 0 & \cdots & 0 & 0 \\
0 & 0 & 0 & 1 & \cdots & 0 & 0 \\ 
\vdots &  & & &            &     & \vdots \\
0 & 0 & 0 & 0 &            & 0 & 1 \\ 
0 & 0 & 0 & 0 &            & 0 & 0  
\end{bmatrix} 
\end{equation} 
to be the matrix with $0$'s everywhere except the $1$'s immediately above the diagonal entries. (In other words, $N$ has a single Jordan block with eigenvalue $0$.) Hessenberg varieties $\hess(A,h)$ defined (as in~\eqref{eq: def Hess}) by $A$ which conjugate to $\mathsf{N}$ are called \textbf{regular nilpotent Hessenberg varieties}. In this case, we may restrict our attention to the case $A=\mathsf{N}$ since $\hess(A,h)\cong \hess(gAg^{-1},h)$ for $g\in GL_n(\mathbb{C})$. 

We now describe ``local defining equations'' for $\hess(\mathsf{N},h)$ following the method of \cite{ADGH}. By ``local'' we mean that for each choice of permutation $w \in S_n$ we focus on the local coordinate chart $\mathcal{N}_w \subseteq \flag(\mathbb{C}^n)$ centered at $w$ and ask for the defining equations for $\mathcal{N}_w \cap \hess(\mathsf{N},h)$ in the affine space $\mathcal{N}_w$. The method for deriving these equations is explained in detail in \cite[Section 3]{ADGH}, to which we refer the reader; here we will only briefly recall the results therein. Following \cite[Definition 3.3]{ADGH} we define certain polynomials $f_{k,\ell}^w$ in $\mathbb{C}[\mathbf{x}_w]$ as follows.

\begin{definition}\label{def: fkl} 
Let $w \in S_n$ and let $k,\ell\in [n]$ with $k>h(\ell)$. We define the polynomial $f_{k,\ell}^w \in \mathbb{C}[\mathbf{x}_w]$ by 
\[
f^w_{k,\ell} := ((wM)^{-1} \mathsf{N}(wM))_{k,\ell},
\]
where some matrix entries of the matrix $wM$ are viewed as variables, as described above. 
\end{definition}

We also define, using the polynomials $f_{k,\ell}^w$ defined above, the following ideals 
\begin{equation}\labell{eq: def Iw} 
I_{w,h} :=  \langle f_{k,\ell}^w \, \mid \, k > h(\ell) \rangle \subseteq \mathbb{C}[\mathbf{x}_w],
\end{equation} 
which we call \textbf{Hessenberg patch ideals}. In other words, $I_{w,h}$ is the ideal generated by the $(k,\ell)$-th matrix entries of $((wM)^{-1}\mathsf{N}(wM))$ where $k>h(\ell)$. 
Examples of $I_{w,h}$ are computed in \cite[Section 3]{ADGH}. We also have the following result from \cite{ADGH}. 

\begin{lemma} 
The ideal $I_{w,h}$ is the defining ideal of the affine variety $\hess(\mathsf{N},h) \cap \mathcal{N}_w$. In particular, it is radical. 
\end{lemma}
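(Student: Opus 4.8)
The plan is to first establish the set-theoretic equality $V(I_{w,h}) = \hess(\mathsf{N},h) \cap \mathcal{N}_w$ and then upgrade it to an equality of schemes via a codimension count together with a generic-smoothness check. For the set-theoretic statement: a point of $\mathcal{N}_w$ is represented by the matrix $wM$ with $M \in U^-$, and the associated flag $V_\bullet$ has $V_i$ equal to the span of the first $i$ columns $v_1,\dots,v_i$ of $wM$. Since $wM$ is invertible, its columns form a basis of $\mathbb{C}^n$; writing $\mathsf{N} v_\ell = \sum_k c_{k,\ell} v_k$ with $c_{k,\ell} = ((wM)^{-1}\mathsf{N}(wM))_{k,\ell}$ (the value of $f^w_{k,\ell}$ at the point), the incidence condition $\mathsf{N} v_\ell \in \langle v_1,\dots,v_{h(\ell)}\rangle$ is literally equivalent to $f^w_{k,\ell}=0$ for all $k>h(\ell)$. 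Because $h$ is nondecreasing, $\min_{i\ge \ell} h(i) = h(\ell)$, so imposing $\mathsf{N} v_\ell \in V_{h(\ell)}$ for every $\ell$ is equivalent to imposing $\mathsf{N} V_i \subseteq V_{h(i)}$ for every $i$. Hence $V(I_{w,h}) = \hess(\mathsf{N},h)\cap\mathcal{N}_w$ inside the affine space $\mathcal{N}_w$.

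It then remains to show $I_{w,h}$ is radical, i.e. that this scheme structure is reduced. I would argue via a complete-intersection count. The number of generators of $I_{w,h}$ is $\#\{(k,\ell)\colon k>h(\ell)\} = \sum_{\ell=1}^n (n-h(\ell))$, and an elementary manipulation shows this equals $\binom{n}{2} - \sum_i (h(i)-i) = \codim_{\flag(\mathbb{C}^n)}\hess(\mathsf{N},h)$, using the standard dimension formula $\dim\hess(\mathsf{N},h)=\sum_i(h(i)-i)$ (see \cite{DeMPS, ADGH}). Combined with the set-theoretic identification, which gives $\dim V(I_{w,h}) = \dim\hess(\mathsf{N},h)$, Krull's height theorem forces the $f^w_{k,\ell}$ to generate an ideal of height exactly equal to their number; thus $\mathbb{C}[\mathbf{x}_w]/I_{w,h}$ is a complete intersection, hence Cohen--Macaulay, hence satisfies Serre's condition $(S_1)$, is equidimensional, and has no embedded components. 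Since $\hess(\mathsf{N},h)$ is irreducible, $V(I_{w,h})$ has a single irreducible component, namely $\hess(\mathsf{N},h)\cap\mathcal{N}_w$ (an open subset of an irreducible variety).

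By Serre's criterion, reducedness now follows once the condition $(R_0)$ is verified, i.e. that $V(I_{w,h})$ is regular at the generic point of its unique component. The natural way to check this is a Jacobian computation: show that the matrix $(\partial f^w_{k,\ell}/\partial x_{i,j})_{k>h(\ell)}$ attains rank $\#\{(k,\ell)\colon k>h(\ell)\}$ at a suitably chosen (hence general) point of $\hess(\mathsf{N},h)\cap\mathcal{N}_w$ — equivalently, that the Hessenberg variety is generically smooth, which is part of its standard structure theory. This last step is the real content and the main obstacle: the set-theoretic statement and the codimension count are formal, but ruling out a non-reduced scheme structure requires genuine information about the generators $f^w_{k,\ell}$ themselves. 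This is precisely the analysis carried out in \cite[Section 3]{ADGH}; moreover, our later results — that, for an appropriate monomial order, each $f^w_{k,\ell}$ has a distinct indeterminate as its leading term — give an independent and sharper route to the same conclusion in the $w_0$-chart.
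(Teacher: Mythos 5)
Your proposal follows essentially the same route the paper (via its citation of \cite{ADGH}) uses: establish the set-theoretic identification $V(I_{w,h})=\hess(\mathsf{N},h)\cap\mathcal{N}_w$ by reading the $f^w_{k,\ell}$ as the matrix of $\mathsf{N}$ in the column-basis of $wM$; observe that the generator count matches $\codim \hess(\mathsf{N},h)$, so $I_{w,h}$ is a complete intersection, hence Cohen--Macaulay and $(S_1)$; and conclude radicality from generic reducedness. This is precisely what the paper's Remark~\ref{lci} summarizes --- reducedness of $\hess(\mathsf{N},h)$ is reduced to generic reducedness, which \cite{ADGH} checks in a single chart.

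The one caveat is that you leave the final step (generic reducedness/$(R_0)$) as a pointer to \cite[Section 3]{ADGH} or to the paper's later Gr\"obner-basis results. That is entirely consistent with how the paper handles this lemma (it is quoted from \cite{ADGH} without an independent proof, and Proposition~\ref{Frobenius_Hess_reduced} later gives a separate Frobenius-splitting reproof of reducedness). Your reasoning is otherwise sound: the codimension count $\sum_\ell (n-h(\ell)) = \binom{n}{2} - \sum_i(h(i)-i)$ is correct, and the use of monotonicity of $h$ to reduce $\mathsf{N} V_i\subseteq V_{h(i)}$ to conditions on each generator $v_\ell$ is the right observation. You might also note implicitly that the argument presupposes $\hess(\mathsf{N},h)\cap\mathcal{N}_w\neq\emptyset$; when it is empty, $I_{w,h}=\langle 1\rangle$ and the lemma is vacuous, so no generality is lost.
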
 

\begin{remark}\label{lci}
It is shown in \cite[Lemma 3.1]{ADGH} that $\hess(\mathsf{N},h)$ is a local complete intersection.
Therefore, to show that $\hess(\mathsf{N},h)$ is reduced, it is enough to show that it is generically reduced, which can be checked in a single chart, e.g. the $w_0$-chart. (Each chart is open and dense in $\hess(\mathsf{N},h)$.) 

\end{remark} 

\begin{example}\label{n5_example}

Let $n=5$ and $w_0=[5\, 4\, 3\, 2\, 1]$. 
We can compute the matrix $(w_0M)^{-1}\mathsf{N}(w_0M)$ to obtain

\[
  \begin{bmatrix}
    0 & 0 & 0 & 0 & 0\\
    1 & 0 & 0 & 0 & 0\\
    f^{w_0}_{3,1} & 1 & 0 & 0 & 0\\
    f^{w_0}_{4,1} & f^{w_0}_{4,2} & 1  &  0 &  0 \\
    f^{w_0}_{5,1} & f^{w_0}_{5,2} & f^{w_0}_{5,3} & 1 & 0 \\
  \end{bmatrix}
\]
where the $f_{k,\ell}\in \mathbb{C}[\mathbf{x}_{w_0}]$ are defined by the following formulas: 
\begin{align*}
    & f^{w_0}_{5,1}= -x_{1,2}+x_{1,3}(x_{3,2}-x_{4,1})+x_{1,4}(x_{2,2}-x_{2,3}x_{3,2}+x_{2,3}x_{4,1}-x_{3,1})+x_{2,1}\\
    & f^{w_0}_{5,2}= -x_{1,3}+x_{1,4}(x_{2,3}-x_{3,2})+x_{2,2}\\
    & f^{w_0}_{5,3} = -x_{1,4}+x_{2,3}\\
    & f^{w_0}_{4,1} = -x_{2,2}+x_{2,3}(x_{3,2}-x_{4,1})+x_{3,1}\\
    & f^{w_0}_{4,2} =-x_{2,3}+x_{3,2}\\
    & f^{w_0}_{3,1} =-x_{3,2}+x_{4,1}. 
\end{align*}
Therefore, if $h_1 = (2,3,4,5,5)$ and $h_2 = (3,4,4,5,5)$, then we have 
\[I_{w_0,h_1} = \langle f^{w_0}_{3,1},f^{w_0}_{4,1},f^{w_0}_{4,2},f^{w_0}_{5,1},f^{w_0}_{5,2},f^{w_0}_{5,3}\rangle \]
and 
\[I_{w_0,h_2} = \langle f^{w_0}_{4,1},f^{w_0}_{5,1},f^{w_0}_{5,2},f^{w_0}_{5,3}\rangle. \]
\end{example}

In what follows, it will be useful to have inductive formulas for the polynomials $f_{k,\ell}^w$ which generate the ideals $I_{w,h}$. We go into more detail in Section~\ref{sec: fij} but it may be helpful to see an example here. In particular, there are some indexing conventions that need careful attention. Again we follow the exposition of \cite{ADGH}. Here, and for much of the remainder of the manuscript, we restrict to the simplest case, namely, the $w=w_0$ chart. 
We begin by recalling an example \cite[Example 3.13]{ADGH} which can serve to orient the reader. 

\begin{example} 
Let $n=4$ and $h=(3,3,4,4)$. The longest element of $S_4$ is the permutation $w_0=[4 \, 3 \, 2 \, 1]$. The coordinate ring of
  $\mathcal{N}_{w_0}$ is
  $$\CC [\mathbf{x}_{w_0}] \cong \CC
  [x_{1,1},x_{1,2},x_{1,3},x_{2,1},x_{2,2},x_{3,1}],$$ and a point in
  $\mathcal{N}_{w_0}$ is determined by a matrix
  \begin{equation*}
    w_0M=
    \begin{pmatrix}
      x_{1,1} & x_{1,2} & x_{1,3} & 1\\
      x_{2,1} & x_{2,2} & 1 & 0\\
      x_{3,1} & 1 & 0 & 0\\
      1 & 0 & 0 & 0
    \end{pmatrix}.
  \end{equation*}
  The inverse must then have
  the form
  \begin{align}\label{eq:yij}
    (w_0M)^{-1} =
    \begin{pmatrix}
      0 & 0 & 0 & 1\\
      0 & 0 & 1 & y_{3,1}\\
      0 & 1 & y_{2,2} & y_{2,1}\\
      1 & y_{1,3} & y_{1,2} & y_{1,1}
    \end{pmatrix}
  \end{align}
  for some $y_{i,j}$. Note that the indexing is such that $y_{i,j}$ is the $(n+1-i, n+1-j)$-th entry in the inverse matrix. It is possible to obtain expressions for the $y_{i,j}$ in terms of the $x_{i,j}$ by 
  starting from the matrix equality $(w_0M)^{-1}(w_0M)={\mathbf{1}}_{4 \times 4}$ (the $4 \times 4$ identity matrix) and
  comparing entries. For example,
  \begin{equation*}
    \begin{split}
      &y_{1,3} = -x_{1,3},\\
      &y_{1,2} = -x_{1,2} - y_{1,3} x_{2,2} = -x_{1,2} + x_{1,3}
      x_{2,2}.
    \end{split}
  \end{equation*}
Alternatively, the $y_{i,j}$ can also be expressed using the standard adjoint formula for inverses of matrices, and thus can be computed using certain minors of the original matrix $w_0M$. We will mainly stick to the latter point of view in the arguments that follow. 

\end{example}

In fact, the above discussion for the case $n=4$ readily generalizes to all $n$. Indeed, we have 
for general $n$ that
 \begin{align}\label{eq:general yij}
    (w_0M)^{-1} = 
    \begin{pmatrix}
      & & & & 1\\
      & & & 1 & y_{n-1,1}\\
      & &  & \vdots & \vdots\\
      & 1 & \ldots & y_{2,2} & y_{2,1}\\
      1 & y_{1,n-1} & \ldots & y_{1,2} & y_{1,1}
    \end{pmatrix}
  \end{align}
where again the $y_{i,j}$ are polynomials in the $\mathbf{x}_{w_0}$ variables. 
There is an inductive procedure to compute the $y_{i,j}$ which we won't recount in detail here, but some facts about the $y_{i,j}$ will be useful and are recorded below.

\begin{lemma}\label{lemma: y info ADGH}
Let $y_{i,j}$ denote the relevant entries of the inverse $(w_0M)^{-1}$ as above. 

\begin{enumerate} 
\item $y_{i,j}$ only depends on the variables $x_{i',j'}$ for $i' \geq i, j' \geq j$. 
\item $y_{i,j}=1$ if $i+j=n+1$. 
\item Suppose $i+j<n+1$. When expressed as a polynomial in the $\mathbf{x}_{w_0}$ variables, $y_{i,j}$ contains no constant terms, i.e., all monomials appearing in $y_{i,j}$ have degree $\geq 1$. 
\end{enumerate} 
\end{lemma}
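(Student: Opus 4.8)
The statement to prove is Lemma~\ref{lemma: y info ADGH}, describing three properties of the entries $y_{i,j}$ of the inverse matrix $(w_0M)^{-1}$.

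\medskip

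The plan is to work directly from the adjoint (cofactor) formula for the inverse, which expresses each $y_{i,j}$ as a signed minor of $w_0M$ divided by $\det(w_0M)$; since $w_0M \in w_0 U^-$ we have $\det(w_0M) = \pm 1$, so each $y_{i,j}$ is (up to sign) a single maximal minor of the submatrix of $w_0M$ obtained by deleting the appropriate row and column. Recalling the indexing convention that $y_{i,j}$ is the $(n+1-i, n+1-j)$-th entry of $(w_0M)^{-1}$, I first identify precisely which row and column of $w_0M$ are deleted to form the relevant cofactor. Looking at the shape of $w_0M$ in~\eqref{eq: generic w0 chart}, the entry $x_{i,j}$ sits in row $i$ and column $j$, the anti-diagonal entries $x_{w_0(j),j} = x_{n+1-j,j}$ equal $1$, and everything strictly below the anti-diagonal (in this orientation, entries $x_{i,j}$ with $i + j > n+1$) is $0$.

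\medskip

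For part (2), I would observe that when $i + j = n+1$, the cofactor expression for $y_{i,j}$ reduces to the determinant of a triangular (or anti-triangular) matrix all of whose relevant diagonal entries are the $1$'s on the anti-diagonal of $w_0M$, plus zeros below; hence the minor equals $\pm 1$, and a sign check gives exactly $1$. For part (1), I would note that the minor computing $y_{i,j}$ only involves entries $x_{i',j'}$ of $w_0M$ lying in rows and columns that survive the deletion; by carefully tracking indices one sees these are exactly the entries with $i' \ge i$ and $j' \ge j$ (the deleted row/column and the structural zeros/ones exclude everything else), so $y_{i,j}$ is a polynomial in those variables only. For part (3), when $i + j < n+1$ the submatrix whose determinant gives $y_{i,j}$ contains, among its entries, at least one row (or column) consisting entirely of honest variables $x_{i',j'}$ with no $1$'s or $0$'s forced by the shape; expanding the determinant along that row shows every monomial in $y_{i,j}$ is divisible by one of those variables, hence has degree $\ge 1$ and there is no constant term. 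Alternatively, and perhaps more cleanly, part (3) follows by a degree/grading argument: assign each $x_{i',j'}$ a positive weight and check the minor is a sum of monomials each using at least one variable because the anti-diagonal of the relevant submatrix is \emph{not} entirely made of the constant $1$'s when $i+j < n+1$.

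\medskip

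The main obstacle I anticipate is purely bookkeeping: correctly translating between the three index conventions in play (the matrix position in $w_0M$, the matrix position in $(w_0M)^{-1}$, and the ``$y_{i,j}$'' labels which are reflected through $n+1$), and then pinning down which minor of $w_0M$ computes a given $y_{i,j}$ and precisely which variables appear in it. Once the correct submatrix is identified, all three claims are short consequences of elementary determinant expansion and the block/triangular structure of $w_0M$; no deep input is needed. I would organize the proof around a single clearly-stated lemma identifying the cofactor, and then dispatch (1), (2), (3) in turn. (This is, in any case, essentially a restatement of results already established in \cite{ADGH}, so an acceptable alternative is simply to cite the relevant statements there and include the short verification above for completeness.)
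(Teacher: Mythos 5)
The paper's own proof cites \cite{ADGH} for parts (1) and (2) and proves (3) by a short induction on $i+j$ using the matrix identity $(w_0M)^{-1}(w_0M)=\mathbf{1}_{n\times n}$: the base case gives $y_{i,j}=-x_{i,j}$ when $i+j=n$, and for $i+j<n$ each $y_{i,j}$ is a constant-free combination of $x$'s and higher-index $y$'s. Your route is genuinely different: you want all three parts to fall out of the cofactor formula (recorded in the paper as Lemma~\ref{lemma: y minor}). The approach is reasonable in spirit, but as written it has a concrete gap.

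Part (2) is fine. Part (3) has a small inaccuracy but survives: you claim the distinguished row consists of variables with "no $1$'s or $0$'s forced by the shape," but in fact the correct row to use is the one coming from row $i$ of $w_0M$, which after deleting column $n+1-i$ is $(x_{i,1},\dots,x_{i,n-i},0,\dots,0)$ --- it does contain structural zeros. The conclusion still holds (Laplace expansion along a row of variables and zeros produces no constant term), so this only needs a wording fix.

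The real problem is part (1). You assert that, after deleting the appropriate row and column, the surviving entries "are exactly the entries with $i'\ge i$ and $j'\ge j$." This is false: the minor $M'_{n+1-j,\,n+1-i}$ contains essentially all variables $x_{a,b}$ with $a+b\le n$ (subject only to $a\ne n+1-j$, $b\ne n+1-i$), including many with $a<i$ or $b<j$; for instance $x_{1,1}$ appears in the minor whenever $i,j<n$. What is actually true is that these small-index variables \emph{do not survive taking the determinant}, because the anti-diagonal $1$'s in the upper-right and lower-left of the minor force, via successive Laplace expansions along the unit rows/columns, the determinant to reduce to a determinant of the central block involving only $x_{a,b}$ with $a\ge i$, $b\ge j$. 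Your phrase "the deleted row/column and the structural zeros/ones exclude everything else" conflates membership in the minor with contribution to its determinant, and that conflation is exactly where the work lies. Either carry out the iterated Laplace/block-triangular reduction explicitly, or do what the paper does and argue from $(w_0M)^{-1}(w_0M)=\mathbf{1}$ by induction on $i+j$, which sidesteps the bookkeeping entirely.
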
 

\begin{proof} 
The first claim is observed in \cite[cf. the discussion in proof of Lemma 3.12]{ADGH}. The second claim is also in \cite{ADGH}. The third claim follows from a straightforward induction argument which we now briefly sketch. Since $(w_0M)^{-1}(w_0M)={\mathbf{1}}_{n \times n}$ (the identity matrix), it is immediate that $y_{i,j} = -x_{i,j}$ if $i+j=n$. If $i+j<n$, it also follows from $(w_0M)^{-1}(w_0M)= {\mathbf{1}}_{n \times n}$ that $y_{i,j}$ is a polynomial in the variables $x_{i',j'}$ and $y_{i',j'}$ for $i'+j' \geq i+j$, which by a simple induction argument has no constant term. The base case $i+j=n$ yields the result. 
\end{proof}

The following formula for the $f_{k,\ell}^{w_0}$ from \cite{ADGH} will be useful in our later arguments. 

\begin{lemma}\label{lemma: fij formula} \cite[Equation (3.6), with different indexing conventions]{ADGH}
Let $k, \ell$ be such that $k > \ell$. Then 
\begin{equation}\label{eq: fkl in x and y}
f^{w_0}_{k,\ell} = x_{n+2-k, \ell} + 
\sum_{s=n+2-k}^{n-\ell} x_{s+1,\ell} \, y_{n+1-k, n+1-s}.
\end{equation}

\end{lemma}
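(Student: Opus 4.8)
The goal is Lemma~\ref{lemma: fij formula}, the explicit formula
\[
f^{w_0}_{k,\ell} = x_{n+2-k, \ell} + \sum_{s=n+2-k}^{n-\ell} x_{s+1,\ell} \, y_{n+1-k, n+1-s}.
\]
The plan is to unravel Definition~\ref{def: fkl}, which sets $f^{w_0}_{k,\ell} = ((w_0M)^{-1}\mathsf{N}(w_0M))_{k,\ell}$, by computing the matrix product one factor at a time. First I would compute $\mathsf{N}(w_0M)$: since $\mathsf{N}$ has $1$'s exactly on the superdiagonal, left-multiplication by $\mathsf{N}$ shifts the rows of $w_0M$ up by one, so $(\mathsf{N}(w_0M))_{a,b} = (w_0M)_{a+1,b}$ for $a \le n-1$ and the bottom row is zero. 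Using the explicit shape~\eqref{eq: generic w0 chart} of $w_0M$ — whose $(i,j)$ entry is $x_{i,j}$ when $i+j \le n$, is $1$ when $i+j = n+1$, and is $0$ when $i+j > n+1$ — I can write down $(\mathsf{N}(w_0M))_{a,b}$ explicitly in terms of the $x$'s.

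Next I would multiply on the left by $(w_0M)^{-1}$, whose relevant entries are recorded in~\eqref{eq:general yij}: its $(c,d)$ entry is the quantity $y_{n+1-c,\,n+1-d}$ in the regions where that is defined (with the convention from Lemma~\ref{lemma: y info ADGH}(2) that $y_{i,j}=1$ when $i+j=n+1$, matching the anti-diagonal of $1$'s, and $y_{i,j}=0$ for $i+j>n+1$, matching the zeros below that anti-diagonal). Then
\[
f^{w_0}_{k,\ell} = \sum_{a=1}^{n} (w_0M)^{-1}_{k,a}\,(\mathsf{N}(w_0M))_{a,\ell} = \sum_{a=1}^{n-1} (w_0M)^{-1}_{k,a}\,(w_0M)_{a+1,\ell}.
\]
Substituting the descriptions above, $(w_0M)_{a+1,\ell}$ is nonzero only when $a+1+\ell \le n+1$, i.e. $a \le n-\ell$, and $(w_0M)^{-1}_{k,a} = y_{n+1-k,\,n+1-a}$ is nonzero only when $(n+1-k)+(n+1-a) \le n+1$, i.e. $a \ge n+1-k$. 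Intersecting these ranges and reindexing $a = s+1$ (so $s$ runs from $n+1-k$ to ... — care is needed with the endpoints, and with isolating the $s = n+1-k$ term where $(w_0M)^{-1}_{k,a}$ sits on the anti-diagonal and equals $1$, contributing the lone $x_{n+2-k,\ell}$) should yield exactly the stated sum. The term $x_{n+2-k,\ell}$ comes from $a = n+1-k$, where $(w_0M)_{a+1,\ell} = (w_0M)_{n+2-k,\ell} = x_{n+2-k,\ell}$ (this requires $(n+2-k)+\ell \le n+1$, i.e. $k > \ell$, which is our hypothesis) and $(w_0M)^{-1}_{k,a} = y_{n+1-k,\,k} = 1$ by Lemma~\ref{lemma: y info ADGH}(2).

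The main obstacle is purely bookkeeping: getting all the index shifts and summation endpoints exactly right, and correctly separating the boundary contributions (the anti-diagonal $1$ of $(w_0M)^{-1}$, and the anti-diagonal $1$'s of $w_0M$, which may or may not fall inside the active range depending on $k$ and $\ell$). There is no conceptual difficulty — it is a direct expansion of a triple matrix product with two of the three factors being essentially permutation-like sparse matrices. Since the paper attributes this to \cite[Equation (3.6)]{ADGH} with a change of indexing conventions, an alternative (and shorter) route is simply to cite that reference and verify that the reindexing $k \leftrightarrow n+1-\cdot$, $\ell \leftrightarrow n+1-\cdot$ translates their formula into~\eqref{eq: fkl in x and y}; I would likely do the direct computation for at least small $n$ (say $n=4,5$, cross-checking against Example~\ref{n5_example}) as a sanity check before asserting the general formula.
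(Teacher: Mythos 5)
Your proposal is correct, and it is more self-contained than the paper itself, which does not prove this lemma but cites \cite[Eq.\ (3.6)]{ADGH} with a note about changed indexing. Your direct expansion of the triple product is the natural route, and all the key pieces are in place: $\mathsf{N}$ shifts rows up so $(\mathsf{N}(w_0M))_{a,\ell} = (w_0M)_{a+1,\ell}$; the entry $(w_0M)^{-1}_{k,a}$ equals $y_{n+1-k,n+1-a}$ (with $y_{i,j}=1$ on the anti-diagonal $i+j=n+1$ and $y_{i,j}=0$ for $i+j>n+1$); and the active range of the summation index $a$ is $n+1-k \le a \le n-\ell$, determined by where both factors are nonzero. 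One small slip: you write "reindexing $a = s+1$", but in fact no shift is needed --- simply set $s = a$ (so $s$ runs from $n+1-k$ to $n-\ell$), and the summand is $x_{s+1,\ell}\,y_{n+1-k,n+1-s}$. Isolating the $s=n+1-k$ term, where $y_{n+1-k,k}=1$, produces the standalone $x_{n+2-k,\ell}$, and the remaining sum runs from $s=n+2-k$ to $n-\ell$, exactly as in the statement. Your later sentence ("The term $x_{n+2-k,\ell}$ comes from $a = n+1-k$...") is in fact consistent with $s=a$, so this is only a stray phrase, not a substantive error. Your check that $(n+2-k)+\ell \le n+1$ requires $k > \ell$, and your suggestion to cross-check against Example~\ref{n5_example}, both reflect exactly the care needed at the boundaries.
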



\subsection{A torus action on $\hess(\mathsf{N},h)$ and $\CC[\mathbf{x}_{w_0}]$}\label{subsec: nonstandard} 


In order to apply some of the results from \cite{KleRaj} that relate liaison theory to geometric vertex decomposition in Section~\ref{sec: alternative}, we need a homogeneity condition. However, the $f_{k,\ell}^w$ are not in general homogeneous with respect to the standard grading on $\CC [\mathbf{x}_w]$. This turns out to not be a problem since there is a circle action on $\hess(\mathsf{N},h)$ which gives a non-standard grading of $\CC [\mathbf{x}_w]$ for which the $f_{k,\ell}^w$ are in fact homogeneous. 

We now describe this circle action on $\hess(\mathsf{N},h)$. Consider the circle subgroup $\mathsf{S} \cong \mathbb{C}^*$ of the maximal torus of $GL_n(\CC)$ 
\[
\mathsf{S} := \{ t:= \mathrm{diag}(g, g^2, \ldots, g^n) \, \mid \, g \in \CC^*\}. 
\]
It is straightforward to check that $\mathsf{S}$ preserves $\hess(\mathsf{N},h)$. In fact one can compute (for the diagonal matrix $t = \mathrm{diag}(g, \ldots, g^n)$)
that 
\[
t \mathsf{N} t^{-1} = g \mathsf{N},
\]
so the conjugation action becomes multiplication by the scalar $g$. We can also explicitly compute the action of $S$ on the local coordinate patch $\mathcal{N}_{w}$.  For concreteness, here we will focus on the $w_0$-chart, where the action is given as follows. The standard maximal torus action on $GL(n,\CC)/B$ is given by left multiplication on left cosets. More precisely, given a matrix $w_0M$ representing a flag $[w_0 M] \in GL(n,\CC)/B$, we have  
\[
t \cdot [w_0 M] = [t(w_0 M)].
\]
It is not difficult to compute $t(w_0M)$ directly. 
To read off the action in terms of the coordinate chart $\mathcal{N}_{w_0}$ we must now find a matrix $M'$ of the form~\eqref{eq: def M} such that $tw_0 M = w_0 M'$, and it is not hard to see from $[t(w_0 M)] = [t(w_0 M)\tilde{t}]$ for $\tilde{t} = \mathrm{diag}(g^{-n}, g^{-n+1},\cdots, g^{-1})$ that we obtain 
\[
w_0 M' = 
\begin{bmatrix}
g^{1-n}x_{1,1} & g^{2-n}x_{1,2} & \cdots & g^{-2}x_{1,n-2} & g^{-1}x_{1,n-1} & 1 \\ 
g^{2-n}x_{2,1} & g^{3-n}x_{2,2} & \cdots & g^{-1}x_{2,n-2} & 1 & 0 \\
\vdots  &         &        &        &       \vdots & \vdots \\
g^{-1}x_{n-1,1}& 1 &    \cdots   & & 0 & 0 \\ 
1 & 0 &            \cdots  & &  0 & 0 & \\
\end{bmatrix}. 
\]
This torus action induces an $\mathsf{S}$-action on $\CC [\mathbf{x}_w]$ given by $t\cdot x_{i,j} = g^{i+j-n-1}x_{i,j}$. We can use this action to define a (positive) $\mathbb{Z}$-grading on $\CC [\mathbf{x}_w]$ where a polynomial $f(\mathbf{x}_w)$ is homogeneous of degree $d\geq 0$ if 
\begin{equation}\label{eq: def nonstandard grading} 
g^d \, f(t\cdot \mathbf{x}_w)= f(\mathbf{x}_w).
\end{equation} 

Furthermore, since the $f_{k,\ell}^{w_0}$ are defined as entries of the matrix $(w_0M)^{-1}\mathsf{N}(w_0M)$, then $t\cdot f_{k,\ell}^{w_0}$ can be computed as the entries of the matrix \[(t(w_0M)\tilde{t})^{-1}\mathsf{N}(t(w_0M)\tilde{t}) = \tilde{t}^{-1}((w_0M)^{-1}g\mathsf{N}(w_0M))\tilde{t}\]
showing that $t\cdot f_{k,\ell}^{w_0} = g^{1+\ell-k}f_{k,\ell}^{w_0}$. The above discussion can be summarized as follows. 

\begin{lemma}\label{homogeneous}
The $f_{k,\ell}^{w_0}$ are homogeneous with respect to the non-standard positive $\mathbb{Z}$-grading of $\CC [\mathbf{x}_{w_0}]$ defined in~\eqref{eq: def nonstandard grading}. 
\end{lemma}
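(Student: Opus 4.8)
The plan is to exhibit the weight of each variable under the circle action, read off the weight of each $f^{w_0}_{k,\ell}$ from the conjugation computation, and then observe that having a well-defined weight under a one-parameter subgroup is exactly the same as being homogeneous for the induced grading. Concretely, the weight of $x_{i,j}$ is $i+j-n-1$, which was computed directly from $w_0 M'$; so the grading defined by~\eqref{eq: def nonstandard grading} assigns to $x_{i,j}$ the degree $\deg(x_{i,j}) = n+1-i-j$. Note this is a \emph{positive} grading on $\CC[\mathbf{x}_{w_0}]$ precisely because the indeterminates $x_{i,j}$ occurring in the $w_0$-chart are exactly those with $i+j \leq n$ (equivalently, those strictly below the anti-diagonal of ones in~\eqref{eq: generic w0 chart}), so $n+1-i-j \geq 1 > 0$ for every variable, and a monomial $\prod x_{i,j}^{a_{i,j}}$ has degree $\sum a_{i,j}(n+1-i-j) \geq 0$ with equality only for the constant monomial.

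First I would make precise the (standard) translation between the $\mathsf{S}$-action and the grading: a polynomial $f \in \CC[\mathbf{x}_{w_0}]$ satisfies $g^d\, f(t\cdot\mathbf{x}_{w_0}) = f(\mathbf{x}_{w_0})$ for all $g\in\CC^*$ if and only if $f$ is a $\CC$-linear combination of monomials each of degree $d$ in the grading just described; this is immediate by expanding $f$ in monomials, using $t\cdot x_{i,j} = g^{i+j-n-1}x_{i,j}$, and comparing coefficients of powers of $g$ (an identity of Laurent polynomials in $g$ forces the monomial-by-monomial statement). Then I would invoke the conjugation computation already carried out in the excerpt: since $f^{w_0}_{k,\ell}$ is the $(k,\ell)$-entry of $(w_0M)^{-1}\mathsf{N}(w_0M)$, and $t(w_0M)\tilde t = w_0 M'$ up to the right $B$-action whose effect cancels in the conjugation, we get
\[
(w_0 M')^{-1}\mathsf{N}(w_0 M') = \tilde t^{-1}\bigl((w_0M)^{-1}\,g\mathsf{N}\,(w_0M)\bigr)\tilde t,
\]
and since $\tilde t = \mathrm{diag}(g^{-n},\dots,g^{-1})$ is diagonal, conjugation by $\tilde t$ multiplies the $(k,\ell)$-entry by $g^{-(n+1-k)}/g^{-(n+1-\ell)} = g^{\ell - k}$. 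Combined with the factor $g$ from $g\mathsf{N}$, this gives $t\cdot f^{w_0}_{k,\ell} = g^{1+\ell-k} f^{w_0}_{k,\ell}$, i.e. $f^{w_0}_{k,\ell}$ is homogeneous of degree $k-\ell-1 \geq 0$ (nonnegative since the polynomial is only defined when $k > h(\ell) \geq \ell$, hence $k \geq \ell+1$). By the first step, this weight identity is equivalent to homogeneity for the grading, which is the claim.

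The main obstacle — really the only subtle point — is bookkeeping with the two diagonal matrices $t$ and $\tilde t$ and confirming that replacing $t(w_0M)$ by $t(w_0M)\tilde t = w_0 M'$ does not change the ideal-theoretic content: one must check that right multiplication by $\tilde t \in B$ is precisely what is needed to bring $t(w_0M)$ back into the normal form~\eqref{eq: def M}, and that because $f^{w_0}_{k,\ell}$ is built from $(w_0M)^{-1}\mathsf{N}(w_0M)$ (a conjugation, so insensitive to the right $B$-factor), the substitution $\mathbf{x}_{w_0}\mapsto t\cdot\mathbf{x}_{w_0}$ on $f^{w_0}_{k,\ell}$ genuinely corresponds to conjugating by $w_0 M'$. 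This is exactly the computation sketched in the paragraph preceding Lemma~\ref{homogeneous}, so in the write-up I would simply cite that computation and spell out the diagonal-conjugation weight count, then close with the equivalence from the first step. No genuinely hard estimate or new idea is required; the content is entirely in organizing the exponent arithmetic and noting that all variable degrees $n+1-i-j$ are strictly positive so the grading is a legitimate positive $\mathbb{Z}$-grading.
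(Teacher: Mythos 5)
Your proposal is correct and follows essentially the same route as the paper: the paper's ``proof'' is the discussion immediately preceding the lemma, which computes $t\cdot x_{i,j}=g^{i+j-n-1}x_{i,j}$ and then $t\cdot f^{w_0}_{k,\ell}=g^{1+\ell-k}f^{w_0}_{k,\ell}$ via the identity $(t(w_0M)\tilde t)^{-1}\mathsf{N}(t(w_0M)\tilde t)=\tilde t^{-1}\bigl((w_0M)^{-1}g\mathsf{N}(w_0M)\bigr)\tilde t$, exactly the diagonal-conjugation weight count you give. The only additions you make are the (correct and worthwhile) explicit check that the variable degrees $n+1-i-j$ are all $\geq 1$ so the grading is positive, and the standard expand-in-monomials observation that having a single $\mathsf{S}$-weight is the same as being homogeneous; neither constitutes a different approach.
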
 

\begin{remark}\label{remark: not positive}
A straightforward similar computation shows that in the other charts $\mathcal{N}_{w}$ for $w \neq w_0$, the $\mathsf{S}$-action also induces a nonstandard $\mathbb{Z}$-grading on $\CC[\mathbf{x}_w]$ with respect to which the ideal $I_{w,h}$ is homogeneous, and 
the generators $f_{k,\ell}^w$
are homogeneous.
However, in the general $w$ case with $w \neq w_0$, it will not necessarily be true
that this $\mathbb{Z}$-grading 
on $\CC[\mathbf{x}_w]$ 
is positive, i.e., it can happen that a non-constant element in $I_{w,h}$ may have degree zero. 
\end{remark}

\section{Geometric Vertex Decomposition and Gr\"obner bases for Triangular Complete Intersections}\label{sec: background GVD}

In this section, we prove some results concerning Gr\"obner bases and geometric vertex decomposition for certain complete intersection ideals which we call \textbf{triangular} (see Definition~\ref{definition: lci triangular}). Our main assertions are Theorem~\ref{theorem: lci triangular groebner} and Corollary~\ref{corollary: lci triangular gvd} below. We believe that these statements are well-known to experts, but we were unable to locate the statements in the literature, so we prove them here. Our motivation for these results stems from particular examples of Hessenberg varieties, but they are also of independent interest. Specifically, in Section~\ref{sec: fij} we use the general results below to construct Gr\"obner bases for the local Hessenberg patch ideals $I_{w_0,h}$ by showing that they are triangular complete intersections.

We begin by briefly recalling the notion of a geometric vertex decomposition. For this section, let $\mathbb{K}$ be an arbitrary field, and let $R$ denote a polynomial ring over $\mathbb{K}$ with a finite and fixed set of indeterminates which we denote by $\mathbf{x}$. (In our setting of the local defining ideals $I_{w_0,h}$ of regular nilpotent Hessenberg varieties, the set of indeterminates will be the set $\mathbf{x}_{w_0}$ as given in Section~\ref{subsec: reg nilp Hess}.) 
Now let $I$ be an ideal in $R$. Suppose $y \in \mathbf{x}$ is one of the indeterminates in $\mathbf{x}$. The \textbf{initial $y$-form} $\init_yf$ of $f \in R$ is the sum of all terms of $f$ having the highest power of $y$. In particular, if $y$ does not divide any term of $f$, then $\init_y(f)=f$. We say a monomial order $<$ on $R$ is \textbf{$y$-compatible} if it satisfies $\init_{<}(f) = \init_{<}(\init_y(f))$ for every $f \in R$. With respect to such a $y$-compatible monomial order, suppose $\mathcal{G} = \{y^{d_i} q_i + r_i \, \mid \, 1 \leq i \leq m\}$ is a Gr\"obner basis for $I$, where $y$ does not divide any $q_i$ and $\init_y(y^{d_i} q_i + r_i) = y^{d_i} q_i$. In this situation it is straightforward to see that $\init_y(I) = \langle y^{d_i} q_i \, \mid \, 1 \leq i \leq m \rangle$. We have the following. 

\begin{definition}\label{def: GVD} (\cite[Definition 2.3]{KleRaj})
In the setting above, define $C_{y,I} := \langle q_i \, \mid \, 1 \leq i \leq m \rangle$ and $N_{y,I} := \langle q_i \, \mid \, d_i = 0 \rangle$. If $\init_y(I) = C_{y,I} \cap (N_{y,I} + \langle y \rangle)$, then we call this decomposition a \textbf{geometric vertex decomposition of $I$ with respect to $y$}. A geometric vertex decomposition is \textbf{degenerate} if $C_{y,I} = N_{y,I}$ or if $C_{y,I} = \langle 1 \rangle$, and \textbf{non-degenerate} otherwise. 
\end{definition}

For further motivation and history surrounding these ideas see \cite{KleRaj}.  For the purposes of this paper it is important to have an inductive framework for geometric vertex decompositions, in the sense that the ideals $N_{y,I}$ and $C_{y,I}$ can also be equipped with such decompositions. This idea is made precise in Definition~\ref{def: g v decomposable} below. Recall that $I$ is said to be \textbf{unmixed} if $\dim(R/P) = \dim(R/I)$ for all $P \in \ass(I)$. In particular, the affine variety defined by an unmixed ideal $I$ will not contain any embedded components and will have equidimensional irreducible components.

\begin{definition}(\cite[Definition 2.6]{KleRaj})\label{def: g v decomposable}
Let $I$ be an ideal in $R$. We say $I$ is \textbf{geometrically vertex decomposable} (or GVD) if $I$ is unmixed and if 
\begin{enumerate} 
\item $I=\langle 1\rangle$, or, $I$ is generated by a (possibly empty) list of indeterminates, or,
\item for some fixed indeterminate $y$ of $R$, $\init_y(I) = C_{y,I} \cap (N_{y,I} + \langle y \rangle)$ is a geometric vertex decomposition and the contractions of $N_{y,I}$ and $C_{y,I}$ to $\mathbb{K}[\mathbf{x} \setminus y]$ are geometrically vertex decomposable. 
\end{enumerate} 
\end{definition}

From the point of view of Gr\"obner geometry, one motivation for asking for geometric vertex decomposability of a homogeneous ideal $I$ is that if such a decomposition exists with respect to a fixed monomial order $<$ of $R$, then there is an associated degeneration of $I$, from which it is possible to construct Gr\"obner bases of $I$. In a different direction, possessing a geometric vertex decomposition has consequences relating the ideals in the decomposition as in \cite[Theorem 2.1]{KMY}, such as providing a recursive formulation for the Hilbert series of $R/I$. In this manuscript, however, we take the point of view that if a set of generators of an ideal has a certain form with respect to a monomial order, then we can conclude both that the generators form a Gr\"obner basis, and that the ideal is GVD. 
We have the following.

\begin{definition}\label{definition: lci triangular} 
Let $\mathbb{K}$ be a field and $I \subseteq R=\mathbb{K}[x_1,\ldots, x_N]$ be an ideal and suppose that $\mathrm{ht}(I)=n$. Suppose there exists an ordered list of generators $\{f_1,\ldots, f_n\}$ of $I$ in $R$, a monomial order $<$ on $R$, and a list of indeterminates $\{x_{i_1}, x_{i_2}, \ldots, x_{i_n}\}$ such that 
\begin{enumerate} 
\item for each $j$ with $1 \leq j \leq n$, the initial term $\init_{<}(f_j)$ is a multiple $u_j x_{i_j}$ of $x_{i_j}$ for some $u_j \in \mathbb{K}^*$, and 
\item the indeterminate $x_{i_j}$ does not appear in any term of $f_m$ for $m>j$. 
\end{enumerate} 
Then we say that the ideal $I$ is a \textbf{triangular complete intersection of height $n$ with respect to $<$}. When the monomial order $<$ is understood from context, we say $I$ is a \textbf{triangular complete intersection of height $n$.} 
\end{definition}

\begin{example}
As a simple example, suppose $S=\mathbb{C}[x_1,x_2,x_3, x_4]$ and suppose we use the lexicographic order with $x_1 > x_2 > x_3 > x_4$. Suppose also that $f_1 = x_1 + x_2^2 + x_2 x_3$ and $f_2 = x_2 + x_3x_4$ and $f_3 = x_3 + x_4^2$. We claim that $I=\langle f_1, f_2, f_3 \rangle$ has height $3$ (this can be verified by Macaulay 2 or by noting that $\{f_1,f_2,f_3\}$ defines a regular sequence in $S$). It is straightforward to check that $\init_<(f_i)=x_i$ does not appear in any term of any $f_j$ with $j>i$, for $i=1,2,3$. Hence this is an example of a triangular complete intersection. 
\end{example}

\begin{theorem}\label{theorem: lci triangular groebner}
Suppose that $I\subset R=\mathbb{K}[x_1,\ldots,x_N]$ is a triangular complete intersection of height $n$ with respect to a monomial order $<$ on $R$. Let $\{f_1,\ldots,f_n\}$  be an ordered list of polynomials in $R$ satisfying the conditions of Definition \ref{definition: lci triangular}. Then $\init_<(I)$ is an ideal of indeterminates, and $\{f_1,\ldots,f_n\}$ defines a Gr\"obner basis of $I$ with respect to $<$. 
\end{theorem}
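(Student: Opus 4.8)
The plan is to use Buchberger's criterion: I will show that every $S$-polynomial $S(f_i, f_j)$ reduces to zero modulo $\{f_1, \ldots, f_n\}$ with respect to $<$. The key structural input is condition (2) of Definition~\ref{definition: lci triangular}, which says that the ``pivot'' variable $x_{i_j}$ of $f_j$ does not appear at all in $f_m$ for $m > j$. First I would observe that, after rescaling, we may assume each leading term is exactly $x_{i_j}$ (dividing by the unit $u_j$), and that the $x_{i_j}$ for $1 \le j \le n$ are pairwise distinct: if $x_{i_j} = x_{i_m}$ with $j < m$, then $x_{i_j}$ would appear in $f_m$ (as its leading term), contradicting (2). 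Hence the leading terms $\{x_{i_1}, \ldots, x_{i_n}\}$ are $n$ distinct indeterminates, which already shows that $\langle \init_<(f_1), \ldots, \init_<(f_n)\rangle$ is an ideal of indeterminates; it remains only to see that this equals $\init_<(I)$, which follows once we know the $f_j$ form a Gr\"obner basis.

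The heart of the argument is the $S$-polynomial computation. Fix $i < j$. Because $\init_<(f_i) = x_{i_i}$ and $\init_<(f_j) = x_{i_j}$ are distinct indeterminates, their least common multiple is $x_{i_i} x_{i_j}$, and
\[
S(f_i, f_j) = x_{i_j} f_i - x_{i_i} f_j.
\]
Now I use condition (2): the variable $x_{i_i}$ does not appear in $f_j$ (since $j > i$), so $x_{i_i} f_j = \init_{x_{i_i}}(x_{i_i} f_j)$ in the obvious sense, and more importantly, writing $f_i = x_{i_i} + g_i$ where $g_i$ has no occurrence of $x_{i_i}$ (this is where I use that $\init_<(f_i)$ is a multiple of $x_{i_i}$ together with the fact that, by condition (2) applied with the roles shifted, $x_{i_i}$ cannot appear to higher power — more precisely $f_i$ is linear in $x_{i_i}$ because its leading term is $x_{i_i}$ and any term with $x_{i_i}^2$ would be $<$-larger), we get
\[
S(f_i, f_j) = x_{i_j}(x_{i_i} + g_i) - x_{i_i} f_j = x_{i_i}(x_{i_j} - f_j) + x_{i_j} g_i = -x_{i_i}\,(f_j - x_{i_j}) + x_{i_j} g_i.
\]
Writing $f_j = x_{i_j} + g_j$ with $g_j$ free of $x_{i_j}$, this is $-x_{i_i} g_j + x_{i_j} g_i = g_i f_j - g_j f_i + (x_{i_i} g_j - x_{i_j} g_i) - x_{i_i} g_j + \ldots$; cleaner: directly $S(f_i,f_j) = x_{i_j} f_i - x_{i_i} f_j = g_i f_j - g_j f_i$ after substituting $x_{i_j} = f_j - g_j$ and $x_{i_i} = f_i - g_i$ and cancelling the $f_i f_j$ cross term. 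Thus $S(f_i, f_j) = g_i f_j - g_j f_i$ lies in $I$ with an explicit standard expression; to finish one checks this is a \emph{standard} representation, i.e. $\init_<(g_i f_j) \le \operatorname{lcm}$ and similarly for $g_j f_i$, so that the reduction to zero is immediate. This last bookkeeping — verifying the degree bounds in Buchberger's second criterion, equivalently that $\init_<(g_i f_j), \init_<(g_j f_i) < \operatorname{lcm}(\init_< f_i, \init_< f_j) = x_{i_i} x_{i_j}$ — is the step I expect to be the only real subtlety, and it follows from $y$-compatibility-type reasoning: $\init_<(g_i) < x_{i_i}$ forces $\init_<(g_i f_j) < x_{i_i}\init_<(f_j) = x_{i_i} x_{i_j}$, and symmetrically.

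Once Buchberger's criterion is verified, $\{f_1, \ldots, f_n\}$ is a Gr\"obner basis, so $\init_<(I) = \langle x_{i_1}, \ldots, x_{i_n}\rangle$ is an ideal of indeterminates, completing the proof. I would also remark that the complete-intersection hypothesis $\operatorname{ht}(I) = n$ is what guarantees $\{f_1, \ldots, f_n\}$ actually generates all of $I$ (rather than a proper subideal) and that the $n$ distinct leading indeterminates are forced, consistent with the height count; alternatively, since the $n$ leading terms are distinct variables, $R/\init_<(I)$ is a polynomial ring in $N - n$ variables, so $\dim R/I = \dim R/\init_<(I) = N - n$, i.e. $I$ has height $n$ automatically — making the Gr\"obner basis conclusion the primary content.
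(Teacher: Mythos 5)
Your proof is correct and follows the same strategy as the paper's: both observe that the leading terms $u_j x_{i_j}$ are pairwise-distinct indeterminates (hence relatively prime) and then invoke Buchberger's criterion on the $S$-polynomials. The paper simply cites \cite[Chapter 2.9, Propositions 4 and Theorem 3]{CLO} for the fact that coprime leading terms force $S(f_i,f_j)$ to reduce to zero, whereas you unroll that citation and re-derive the lcm representation $S(f_i,f_j)=g_i f_j - g_j f_i$ by hand; this is a valid, if slightly more verbose, rendering of the same argument.
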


\begin{proof} 
By assumption, since they are (non-zero constant multiples of) distinct indeterminates, $\init_<(f_i)$ and $\init_<(f_j)$ are relatively prime for all $i\neq j$.  By \cite[Chapter 2.9, Proposition 4]{CLO}, the $S$-polynomials $S(f_i,f_j)$ reduce to zero for all $i \neq j$, so by \cite[Chapter 2.9, Theorem 3]{CLO}, $\{f_1,\ldots,f_n\}$ is a Gr\"obner basis with respect to $<$ for the ideal it generates. In particular, it is immediate that $\init_<(I)$ is an ideal of indeterminates. 
\end{proof} 

In addition to obtaining Gr\"obner bases, we can also conclude that a triangular complete intersection ideal is GVD, because -- as we have just seen -- its initial ideal is generated by indeterminates. To justify this, we need the notion of an ideal being $<$-compatibly geometric vertex decomposable \cite[Definition 2.11]{KleRaj}. This concept is similar to Definition \ref{def: g v decomposable} but different in the following way. Note that in the definition of geometric vertex decomposability, the monomial order $<$ with respect to which $N_{y,I}$ and $C_{y,I}$ are also GVD (in part (b) of Definition~\ref{def: g v decomposable}) is not specified, so in particular, they may vary. However, for an ideal which is $<$-compatibly GVD in the sense of \cite[Definition 2.11]{KleRaj}, the monomial order $<$ is fixed once and for all, and the contractions of $N_{y,I}$ and $C_{y,I}$ are required to also be $<$-compatibly GVD with respect to the induced monomial order of the fixed $<$ on the smaller ring $\mathbb{K}[\mathbf{x}\setminus y]$. We have the following. 

\begin{lemma}\label{lem:initGVD}
Let $I \subseteq R = \mathbb{K}[x_1,\ldots, x_N]$ be an ideal and $<$ a lexicographic monomial order on $R$.
If $\init_<(I)$ is an ideal of indeterminates, then $I$ is geometrically vertex decomposable.
\end{lemma}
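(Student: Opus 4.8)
The statement to prove is Lemma~\ref{lem:initGVD}: if $\init_<(I)$ is an ideal of indeterminates for a lexicographic order $<$, then $I$ is geometrically vertex decomposable. My plan is to induct on the number of indeterminates appearing in $\init_<(I)$, or equivalently on the number of generating indeterminates $m$ of $\init_<(I)$. The base case is $m=0$, meaning $\init_<(I)=\langle 0 \rangle$, which forces $I=\langle 0 \rangle$ (if $I$ were nonzero, its initial ideal would be nonzero); this is generated by the empty list of indeterminates, and $\langle 0 \rangle$ is unmixed, so it satisfies case (1) of Definition~\ref{def: g v decomposable}. For the inductive step, I first need to observe that $I$ is unmixed: since $\init_<(I)$ is generated by indeterminates it is radical and in fact a complete intersection of linear forms, hence $R/\init_<(I)$ is a polynomial ring, so it is Cohen--Macaulay and equidimensional; by standard semicontinuity/flatness of Gr\"obner degenerations, $R/I$ is then also equidimensional (they have the same Hilbert function and the same dimension, and $R/I$ being a degeneration of a CM ring forces it to be CM, hence unmixed). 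So $I$ is unmixed.

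\textbf{The vertex decomposition step.} Let $y = x_{i_1}$ be the largest indeterminate (with respect to $<$) among those generating $\init_<(I)$. Since $<$ is lexicographic with $y$ as (one of) the largest relevant variable, it is $y$-compatible in the sense of the paragraph preceding Definition~\ref{def: GVD}. Choose a reduced Gr\"obner basis $\mathcal{G}$ of $I$ with respect to $<$. Because $\init_<(I)$ is generated by indeterminates and $\mathcal{G}$ is reduced, exactly one element $g \in \mathcal{G}$ has $\init_<(g) = y$ (up to a unit), and no other element of $\mathcal{G}$ involves $y$ at all in its initial term; moreover, since the initial ideal consists of squarefree degree-one monomials, $g = y + r$ where $y \nmid r$ — i.e., $g = y \cdot q + r$ with $q = 1$, $d=1$. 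So in the notation of Definition~\ref{def: GVD}, for this $y$ we get $C_{y,I}$ generated by the $q_i$'s: one of them is $q = 1$, hence $C_{y,I} = \langle 1 \rangle$, and therefore $\init_y(I) = C_{y,I} \cap (N_{y,I} + \langle y \rangle) = N_{y,I} + \langle y \rangle$ automatically (since $C_{y,I} = \langle 1 \rangle$ makes the intersection trivial), giving a (degenerate) geometric vertex decomposition with respect to $y$. I should double-check that this indeed matches the equality required in Definition~\ref{def: GVD}: with $C_{y,I} = \langle 1 \rangle$, the right-hand side is $N_{y,I} + \langle y \rangle$, and the left-hand side $\init_y(I)$ equals $\langle \text{the initial } y\text{-forms}\rangle = \langle y \rangle + (\text{the rest}) = N_{y,I} + \langle y\rangle$ by the same reasoning used in the paragraph before Definition~\ref{def: GVD}. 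Now $C_{y,I} = \langle 1 \rangle$ trivially satisfies case (1) and is GVD. For $N_{y,I}$: its generators are the $q_i$ with $d_i = 0$, i.e., the elements of $\mathcal{G}$ not involving $y$, viewed in $\mathbb{K}[\mathbf{x}\setminus y]$. These form a Gr\"obner basis (under the induced lex order) of the contraction $I' := (\init_y I : \text{stuff})$ — more carefully, $N_{y,I}$ is generated in $\mathbb{K}[\mathbf{x}\setminus y]$ by $\{g \in \mathcal{G} : y \nmid \text{any term of } g\}$, and these generate an ideal whose initial ideal is generated by the indeterminates $x_{i_2}, \ldots, x_{i_m}$ — one fewer than before. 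By the inductive hypothesis, the contraction of $N_{y,I}$ to $\mathbb{K}[\mathbf{x}\setminus y]$ is GVD. Hence case (2) of Definition~\ref{def: g v decomposable} holds, and $I$ is GVD.

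\textbf{Main obstacle.} The delicate point is the bookkeeping around reduced Gr\"obner bases: I must justify carefully that when $\init_<(I)$ is generated by indeterminates, the reduced Gr\"obner basis $\mathcal{G}$ has exactly one element with initial term $y$ and that its $q$-part is the unit $1$, and that the remaining elements, restricted to $\mathbb{K}[\mathbf{x}\setminus y]$, genuinely generate $N_{y,I}$ and have initial ideal generated by the remaining indeterminates. This is where the \emph{lexicographic} hypothesis is essential (it guarantees $y$-compatibility and the clean elimination behavior), so I will invoke elimination-theory properties of lex orders. A secondary technical point is the unmixedness of $I$, for which I will cite the standard fact that a Gr\"obner degeneration preserves the Hilbert function and that $R/I$ is Cohen--Macaulay whenever $R/\init_<(I)$ is (here $R/\init_<(I)$ is even a polynomial ring). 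Everything else is a direct appeal to Definition~\ref{def: g v decomposable} and the structure of $S$-polynomial reductions already used in the proof of Theorem~\ref{theorem: lci triangular groebner}.
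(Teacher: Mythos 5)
Your proof takes a different route from the paper's, which is essentially one sentence long: it observes that an ideal of indeterminates has Stanley--Reisner complex a simplex, hence is $<$-compatibly geometrically vertex decomposable, and then invokes \cite[Proposition 2.14]{KleRaj} to transfer this property from $\init_<(I)$ to $I$. You instead build the geometric vertex decomposition of $I$ directly, by induction on the number $m$ of indeterminates generating $\init_<(I)$, exhibiting at each step a degenerate decomposition with $C_{y,I}=\langle 1\rangle$ and $N_{y,I}$ generated by the remaining reduced Gr\"obner basis elements. The two are morally equivalent---the cited proposition packages precisely the induction you carry out---but yours is self-contained and makes the $C$ and $N$ ideals explicit, while the paper's is shorter and delegates the bookkeeping to the citation.

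There is a genuine gap in your $y$-compatibility claim that must be repaired. You take $y=x_{i_1}$ to be the $<$-largest indeterminate among the generators of $\init_<(I)$ and assert that the lex order $<$ is $y$-compatible. This is false whenever $y$ is not the globally largest variable of $R$: for lex with $x_1>x_2$ and $y=x_2$, the polynomial $f=x_1+x_2^2$ has $\init_<(f)=x_1$ but $\init_<(\init_y(f))=x_2^2$. So if $x_1\notin\init_<(I)$, your chosen $y$ fails the definition in the paragraph preceding Definition~\ref{def: GVD}. The fix is short but has to be stated: since the leading terms of the reduced Gr\"obner basis $\mathcal{G}$ all lie in $\{x_{i_1},\dots,x_{i_m}\}$, no variable strictly larger than $y$ in $<$ can divide any term of any $g\in\mathcal{G}$ (such a term would beat $\init_<(g)$ in lex order), so $\mathcal{G}\subset\mathbb{K}[x_{i_1},\dots,x_N]$. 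One may therefore replace $<$ by a lex order $<'$ on $R$ placing $y$ first and agreeing with $<$ on $\mathbb{K}[x_{i_1},\dots,x_N]$; then $<'$ is $y$-compatible, $\mathcal{G}$ is still a (reduced) Gr\"obner basis with the same leading terms because those are pairwise coprime indeterminates (as in the proof of Theorem~\ref{theorem: lci triangular groebner}), and your analysis of $C_{y,I}$ and $N_{y,I}$ then goes through verbatim. Alternatively, always take $y=x_1$ and induct on the number of variables $N$ rather than on $m$, absorbing the case $x_1\notin\init_<(I)$ as a degenerate decomposition with $C_{y,I}=N_{y,I}$. Finally, a smaller point: the unmixedness of $I$ follows more directly from Remark~\ref{remark: complete intersection} ($\init_<(I)$ a complete intersection forces $I$ to be one, hence Cohen--Macaulay, hence unmixed) than from the Hilbert-function and semicontinuity argument you sketch, which implicitly requires homogenization when $I$ is not graded.
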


\begin{proof}
The Stanley-Reisner complex associated to an ideal of indeterminates is a single (possibly empty) simplex. In particular, it is vertex decomposable in the classical sense of simplicial complexes. In fact, it is not hard to see that it is $<$-compatibly geometrically vertex decomposable. Now the result immediately follows from \cite[Proposition 2.14]{KleRaj} (which is valid over an arbitrary field $\mathbb{K}$).
\end{proof}

The following is immediate. 

\begin{corollary}\label{corollary: lci triangular gvd} 
If $I \subset R = \mathbb{K}[x_1,\ldots, x_N]$ is a triangular complete intersection of height $n$ with respect to a lexicographic order $<$, then it is GVD.
\end{corollary}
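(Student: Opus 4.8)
The statement to prove is Corollary~\ref{corollary: lci triangular gvd}: if $I \subset R = \mathbb{K}[x_1,\ldots,x_N]$ is a triangular complete intersection of height $n$ with respect to a lexicographic order $<$, then $I$ is GVD.

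The plan is straightforward, since all the heavy lifting has been done in the preceding results. First I would invoke Theorem~\ref{theorem: lci triangular groebner}, which asserts that under the hypothesis that $I$ is a triangular complete intersection of height $n$ with respect to $<$, the initial ideal $\init_<(I)$ is an ideal of indeterminates (and moreover that the given generators form a Gr\"obner basis, though only the statement about $\init_<(I)$ is needed here). This reduces the claim entirely to a statement about the initial ideal.

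Next I would apply Lemma~\ref{lem:initGVD}, which says precisely that if $\init_<(I)$ is an ideal of indeterminates with respect to a lexicographic order $<$, then $I$ is geometrically vertex decomposable. Chaining these two facts together gives the corollary immediately. The one point to verify is that the hypotheses match up: Theorem~\ref{theorem: lci triangular groebner} is stated for an arbitrary monomial order, while Lemma~\ref{lem:initGVD} requires $<$ to be lexicographic — but this is exactly why the corollary restricts to lexicographic orders, so there is no gap.

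I do not anticipate any real obstacle here; the corollary is labelled ``immediate'' in the excerpt, and indeed it is a one-line deduction from the two cited results. The only thing worth being careful about is not to overstate what is being used: GVD in the sense of Definition~\ref{def: g v decomposable} (as opposed to the stronger $<$-compatible notion) is what Lemma~\ref{lem:initGVD} delivers, and that is exactly what the corollary claims, so the argument is complete.

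\begin{proof}
By Theorem~\ref{theorem: lci triangular groebner}, since $I$ is a triangular complete intersection of height $n$ with respect to $<$, the initial ideal $\init_<(I)$ is an ideal of indeterminates. As $<$ is a lexicographic order by hypothesis, Lemma~\ref{lem:initGVD} applies and we conclude that $I$ is geometrically vertex decomposable.
\end{proof}
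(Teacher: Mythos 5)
Your proposal is correct and follows exactly the same two-step argument as the paper: invoke Theorem~\ref{theorem: lci triangular groebner} to see that $\init_<(I)$ is an ideal of indeterminates, then apply Lemma~\ref{lem:initGVD}. Your extra remark about the lexicographic hypothesis being what bridges the two results is a sensible observation, though the paper leaves it implicit.
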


\begin{proof} 
We saw in Theorem~\ref{theorem: lci triangular groebner} that if $I$ is a triangular complete intersection of height $n$ with respect to $<$, then $\init_<(I)$ is an ideal of indeterminates. Now the result follows from Lemma~\ref{lem:initGVD}. 
\end{proof} 

\begin{remark}\label{remark: complete intersection}
It turns out that an ideal $I$ which is generated by $\{f_1,\ldots,f_n\}$ satisfying conditions $(1)$ and $(2)$ of Definition \ref{definition: lci triangular} is automatically a complete intersection. Indeed, by Lemma~\ref{lem:initGVD}, $\init_<(I)$ is an ideal of indeterminates and hence a complete intersection, so it follows that $I$ is also a complete intersection \cite[Corollary 19.3.8]{EGA}. Additionally, if $I$ is a homogeneous ideal (with respect to any grading of $R$), then we could also conclude that the height of $I$ and $\init_<(I)$ are equal by \cite[Section 8.2.3 and Theorem 15.26]{Eisenbud}. Homogenization of $I$ (see \cite[Section 1.8]{Eisenbud} for details) would yield a similar result for non-homogeneous ideals. 
\end{remark}

\section{The case of the regular nilpotent Hessenberg variety}\label{sec: fij}

The goal of this section is to show that the polynomials $f_{k,\ell}^{w_0}$ defining the Hessenberg patch ideal $I_{w_0,h}$ (i.e., the ideal of local defining equations for the Hessenberg variety in the $w_0$-chart) obey certain recursive relationships. These observations will allow us to show that the patch ideal $I_{w_0,h}$ is a triangular complete intersection in the sense of Definition~\ref{definition: lci triangular}. From this, we obtain our main results, namely, that 
the $\{f^{w_0}_{k,\ell}\}$ form a Gr\"obner basis for $I_{w_0,h}$ (Corollary~\ref{cor: fkl are a GB}) and that $I_{w_0,h}$ is GVD (Corollary~\ref{cor: Hess patch is GVD}).

\subsection{Recursively defining $f^{w_0}_{k,\ell}$}\label{subsec: recursive def of f}

In this section, we work solely in the $w_0$-chart. For this reason, and for notational simplicity, in this section we denote $f_{k,\ell}^{w_0}$ by $f_{k,\ell}$ and $w_0M$ by $M'$.

We first note that the matrix entries $y_{i,j}$ of $(M')^{-1}$ in~\eqref{eq:general yij} can be expressed in terms of cofactors, via the well-known formula for matrix inverses. 
Indeed, let $M'_{n+1-j, n+1-i}$ denote the $(n-1)\times (n-1)$ matrix obtained 
from the $n \times n$ matrix $M'$ in~\eqref{eq: generic w0 chart} by deleting the $(n+1-j)$-th row and $(n+1-i)$-th column. 
The following lemma is immediate. 

\begin{lemma}\label{lemma: y minor} 
\[
y_{i,j} = (-1)^{n(n-1)/2} (-1)^{i+j} \det M'_{n+1-j, n+1-i}.
\]
\end{lemma} 

Here the $(-1)^{n(n-1)/2}$ represents the determinant of $M'$, and the factor $(-1)^{i+j}$ is the sign that comes from the cofactors in the standard matrix inverse formula.

Now recall that we visualize the polynomial $f_{k,\ell}$ as being associated to the $(k, \ell)$-th entry of the matrix $(M')^{-1} \mathsf{N}M'$. Our next observation is that the $f_{k,\ell}$ which lie immediately below the main diagonal are particularly simple. Note that these particular $f_{k,\ell}$ are not generators for $I_{w_0,h}$ when $h$ is indecomposable, but they do appear in the recursive expression of Proposition \ref{prop: fij inductive}, which is why this lemma is useful as a base case. 

\begin{lemma}\label{lemma: f j+1 j}
Let $\ell \in \mathbb{Z}$ with $1 \leq \ell \leq n-1$. Then $f_{\ell+1,\ell}=1$. 
\end{lemma}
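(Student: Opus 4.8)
The plan is to compute $f_{\ell+1,\ell}$ directly from the formula for the $f_{k,\ell}^{w_0}$ recorded in Lemma~\ref{lemma: fij formula}, specializing $k = \ell+1$. Substituting $k = \ell+1$ into Equation~\eqref{eq: fkl in x and y}, the leading term becomes $x_{n+2-(\ell+1),\ell} = x_{n+1-\ell,\ell}$, and the summation index $s$ runs from $n+2-(\ell+1) = n+1-\ell$ up to $n-\ell$, which is an empty range (since $n+1-\ell > n-\ell$). Hence the formula collapses to just $f_{\ell+1,\ell} = x_{n+1-\ell,\ell}$. Now observe that $(n+1-\ell) + \ell = n+1$, so by the parametrization of $\mathcal{N}_{w_0}$ in~\eqref{eq: generic w0 chart}, the $(n+1-\ell,\ell)$-th entry of $w_0M$ is precisely one of the entries that is forced to equal $1$ (these are exactly the entries $x_{i,j}$ with $i+j = n+1$). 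Therefore $f_{\ell+1,\ell} = 1$, as claimed.

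Alternatively, and perhaps more transparently, one can argue directly from Definition~\ref{def: fkl}: $f_{\ell+1,\ell}$ is the $(\ell+1,\ell)$-th entry of $(w_0M)^{-1}\mathsf{N}(w_0M)$, i.e., the first subdiagonal entry in column $\ell$. Since $\hess(\mathsf{N},h)$ is $\mathsf{S}$-invariant and, as computed in Section~\ref{subsec: nonstandard}, conjugation by $\mathsf{N}$ only rescales, this matrix is lower-triangular with $1$'s on the first subdiagonal — a fact already visible in Example~\ref{n5_example}. One can see this from the structure: $(w_0M)^{-1}\mathsf{N}(w_0M) = (w_0 M)^{-1} \mathsf{N} w_0 M$, and since $w_0 \mathsf{N} w_0$ is the transpose-like lower shift, the subdiagonal entries are constrained; writing out the relevant matrix products using Lemma~\ref{lemma: y info ADGH}(2) (namely $y_{i,j}=1$ when $i+j=n+1$) pins down the value. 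I would include whichever of these two routes reads most cleanly alongside the surrounding exposition; the first (via Lemma~\ref{lemma: fij formula}) is the shortest and most self-contained.

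The main (and only) subtlety to get right is the indexing: one must double-check that with $k=\ell+1$ the summation range in~\eqref{eq: fkl in x and y} is genuinely empty and that the surviving variable $x_{n+1-\ell,\ell}$ really is one of the entries set to $1$ rather than a free coordinate. Both of these are bookkeeping checks against the conventions fixed in Section~\ref{subsec: flag} and Lemma~\ref{lemma: fij formula}, so there is no substantive obstacle — the lemma is essentially a consistency check that the recursive machinery bottoms out correctly, which is exactly why it serves as the base case for Proposition~\ref{prop: fij inductive}.
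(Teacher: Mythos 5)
Your first route is exactly the paper's proof: specialize $k=\ell+1$ in Lemma~\ref{lemma: fij formula}, note the summation range $s=n+1-\ell,\ldots,n-\ell$ is empty, and observe that the surviving variable $x_{n+1-\ell,\ell}$ lies on the antidiagonal ($i+j=n+1$) of $w_0M$ and hence equals $1$. The alternative sketch in your second paragraph is not needed and contains a slip (it is conjugation of $\mathsf{N}$ by the torus element $t$ that rescales, not ``conjugation by $\mathsf{N}$''), but since you explicitly favor and carry out the first route, the proposal is correct and matches the paper.
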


\begin{proof} 
From Lemma~\ref{lemma: fij formula} and by appropriate substitutions of variables we know that 
\begin{equation}\label{eq: off main diag}
    f_{\ell+1,\ell} = x_{n-\ell+1,\ell} + \sum_{s=n-\ell+1}^{n-\ell} x_{s+1,\ell} \, \, y_{n-\ell, n+1-s}. 
\end{equation}
We can see that the summation in the second term of the RHS of~\eqref{eq: off main diag} is actually an empty sum, so $f_{\ell+1,\ell} = x_{n-\ell+1,\ell}$. We also know from the form of the matrix $M'$ that $x_{n-\ell+1,\ell}=1$, completing the proof. 
\end{proof} 

The next result gives an important inductive relationship between different $f_{k,\ell}$. This is our main technical tool for the remainder of the arguments concerning $I_{w_0,h}$. The point of the formula is that a given generator $f_{k,\ell}$ with $k > \ell+1$ can be expressed in terms of certain $f_{s,\ell}$'s with $\ell+1 \leq s \leq k-1$.

\begin{remark} 
In fact, we do not actually use the full strength of this recursion result in the current paper. It would be interesting to explore the recursion further, especially for other $w$-charts with $w \neq w_0$. We leave this open for future work. 
\end{remark}

\begin{proposition}\label{prop: fij inductive} 
Let $n \in \mathbb{Z}, n \geq 3$. For $1 \leq k,\ell \leq n$ and $k >\ell+1$, let $f_{k,\ell}$ be the $(k,\ell)$-th entry of the matrix $(M')^{-1}\mathsf{N}M'$ as above, considered as an element of $\CC[\mathbf{x}_{w_0}]$.  Then 
\begin{equation}\label{eq: inductive fij}
f_{k,\ell} = x_{n+2-k,\ell} - \left( \sum_{p=\ell+1}^{k-1} x_{n+1-k,p}f_{p,\ell} \right).
\end{equation}
\end{proposition}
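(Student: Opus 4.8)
The plan is to work directly with the explicit formula for $f_{k,\ell}$ given in Lemma~\ref{lemma: fij formula}, namely
\[
f_{k,\ell} = x_{n+2-k,\ell} + \sum_{s=n+2-k}^{n-\ell} x_{s+1,\ell}\, y_{n+1-k,n+1-s},
\]
and to compare it, term by term, with the claimed right-hand side of~\eqref{eq: inductive fij}. First I would rewrite each $f_{p,\ell}$ appearing in the sum $\sum_{p=\ell+1}^{k-1} x_{n+1-k,p} f_{p,\ell}$ using the same formula from Lemma~\ref{lemma: fij formula}, so that the entire expression on the right of~\eqref{eq: inductive fij} becomes a polynomial in the $x$'s and $y$'s. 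The goal is then to match this against the formula for $f_{k,\ell}$ after substituting, wherever a monomial of the form $x_{n+1-k,p}\,x_{\star,\ell}$ arises, the relation that expresses one of the $y_{n+1-k,\cdot}$ entries in terms of lower-indexed $y$'s and $x$'s — this is exactly the cofactor/recursion content underlying Lemma~\ref{lemma: y info ADGH} and the matrix identity $(M')^{-1}(M') = \mathbf{1}_{n\times n}$.

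More precisely, the key identity to extract is a Laplace-type expansion for $y_{n+1-k,n+1-s}$: from $(M')^{-1}M' = \mathbf{1}$, reading off an appropriate entry gives a linear relation of the form $y_{n+1-k,n+1-s} = -x_{n+1-k,s} - \sum_{p} x_{n+1-k,p}\, y_{\cdot,\cdot}$, where the $y$'s on the right have strictly larger index-sum and hence (by Lemma~\ref{lemma: y info ADGH}(1)) depend on fewer variables. Substituting this into the formula for $f_{k,\ell}$ and carefully re-indexing the double sum, collecting the coefficient of each $x_{n+1-k,p}$, should reorganize $f_{k,\ell}$ precisely as $x_{n+2-k,\ell} - \sum_{p=\ell+1}^{k-1} x_{n+1-k,p}\bigl(x_{n+2-p,\ell} + \sum_s x_{s+1,\ell} y_{n+1-p,n+1-s}\bigr) = x_{n+2-k,\ell} - \sum_{p=\ell+1}^{k-1} x_{n+1-k,p} f_{p,\ell}$, which is the claim. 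I would also need to check the boundary of the summation ranges: the lower limit $p = \ell+1$ corresponds (via $f_{\ell+1,\ell}=1$ from Lemma~\ref{lemma: f j+1 j}) to the term $-x_{n+1-k,\ell+1}$, and the upper limit $p = k-1$ to a term involving $f_{k-1,\ell}$; I would verify these match the $s = n-\ell$ and $s = n+2-k$ ends of the original sum respectively. The hypothesis $n \geq 3$ and $k > \ell+1$ ensures the sum over $p$ is nonempty and all indices stay in $[n]$.

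The main obstacle I anticipate is bookkeeping: getting the index shifts exactly right so that the coefficient of $x_{n+1-k,p}$ in the rearranged expression for $f_{k,\ell}$ is literally $f_{p,\ell}$ and not some truncation of it, and making sure no cross-terms are dropped or double-counted when the double sum is regrouped. A cleaner alternative, which I would fall back on if the direct manipulation becomes unwieldy, is to argue at the level of matrices: write $(M')^{-1}\mathsf{N}M'$ and observe that the $(k,\ell)$-entry can be computed by expanding $\mathsf{N}M'$ (which merely shifts rows of $M'$ up by one) and then using the cofactor description of $(M')^{-1}$ from Lemma~\ref{lemma: y minor}; a single-step cofactor expansion of the relevant $(n-1)\times(n-1)$ minor along its first column, together with Lemma~\ref{lemma: f j+1 j} for the base entries, should produce~\eqref{eq: inductive fij} with less index-chasing. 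Either way, the proof is a finite computation with no conceptual difficulty beyond careful indexing.
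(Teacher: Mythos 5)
Your primary approach is correct and does establish the claimed identity, but it follows a genuinely different path from the paper. The paper's proof starts from the formula in~\eqref{eq: fkl in x and y}, replaces each $y_{n+1-k,\,n+1-s}$ by its cofactor description (Lemma~\ref{lemma: y minor}), Laplace-expands the resulting $(n-1)\times(n-1)$ determinants along the \emph{top row} of $M'$ (not a column, as in your fallback), invokes the separate vanishing result Lemma~\ref{lemma: zero minors} to eliminate most of the $(n-2)\times(n-2)$ subminors, and only then recognizes the surviving inner sums as $f_{p,\ell}$; moreover, that computation is carried out only for $k=n$, and the general $k$ is obtained by identifying the upper-left $k\times k$ block of $(M')^{-1}\mathsf{N}M'$ with the analogous construction for $\flag(\mathbb{C}^k)$. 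Your route avoids both the determinant bookkeeping and the reduction to $k=n$ by using a linear recursion for the $y$'s derived from a matrix inverse identity, which is arguably cleaner. Two corrections to your sketch of that recursion are needed: it should be read off from the product $M'(M')^{-1}=\mathbf{1}$ rather than $(M')^{-1}M'=\mathbf{1}$ (the latter gives a recursion in the \emph{second} index of $y$, whereas you need one in the first index so that the coefficients are $x_{n+1-k,\,p}$), and the constant term is $-x_{n+1-k,\,n+1-s}$ rather than $-x_{n+1-k,s}$. The corrected recursion reads
\[
y_{n+1-k,\,n+1-s} \;=\; -x_{n+1-k,\,n+1-s}\;-\;\sum_{p=n+2-s}^{k-1} x_{n+1-k,\,p}\,y_{n+1-p,\,n+1-s},
\]
and substituting this into~\eqref{eq: fkl in x and y} and swapping the order of summation does produce $f_{p,\ell}$ as the coefficient of each $-x_{n+1-k,\,p}$, with the boundary term $p=\ell+1$ supplied by $x_{n+1-\ell,\,\ell}=1$ and $f_{\ell+1,\,\ell}=1$. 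In short, what your approach buys is a direct, all-$k$ argument that needs neither Lemma~\ref{lemma: zero minors} nor the induction on $n$; what it costs is having to state and prove the $y$-recursion, which the paper does not spell out. Your fallback cofactor argument is essentially the paper's own; if you pursue it, note that the expansion is along a row rather than a column, that Lemma~\ref{lemma: zero minors} is an essential ingredient you omitted, and that the paper still reduces to $k=n$.
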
 

Before proving Proposition~\ref{prop: fij inductive} we introduce some notation. In the exposition above, we denoted the $(n-1) \times (n-1)$ minor of $M'$ obtained by deleting the $s$-th row and $p$-th column by the symbol $M'_{s,p}$. Here and below we denote the $(n-2) \times (n-2)$ minor of $M'$, obtained by deleting the $s$-th and $s'$-th rows and the $p$-th and $p'$-th columns, by the symbol $M'_{\{s,s'\},\{p,p'\}}$, where we assume that $s \neq s', p \neq p'$. Using this notation, we can further expand the determinant $\det M'_{s,n}$ as follows.  Suppose for instance that $s \geq 2$. In this case, we know that the matrix $M'_{s,n}$ has a top row of the form $(x_{1,1}, x_{1,2}, \cdots, x_{1,n-1})$. We may compute $\det M'_{s,n}$ by expanding along this top row, obtaining 
\begin{equation}\label{eq: det Mks}
\det M'_{s,n} = \sum_{p=1}^{n-1} (-1)^{p+1} x_{1,p} \det M'_{\{1,s\},\{p,n\}}.
\end{equation}
We have the following. 

\begin{lemma}\label{lemma: zero minors}
Let $n \in \mathbb{Z}, n \geq 3$, and $s,p \in \mathbb{Z}$ with $2 \leq s \leq n$ and $1 \leq p < n$. If $s+p \leq n$ then $\det M'_{\{1,s\},\{p,n\}} = 0$. 
\end{lemma}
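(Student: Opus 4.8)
The plan is to exhibit, inside the $(n-2)\times(n-2)$ matrix $M'_{\{1,s\},\{p,n\}}$, a rectangular all-zero block which is large enough to force the determinant to vanish, via the standard fact that an $m\times m$ matrix possessing an $a\times b$ all-zero submatrix with $a+b\geq m+1$ has zero determinant. (The quick reason: in the Leibniz expansion $\sum_\sigma \pm\prod_i (M')_{i,\sigma(i)}$, if $\sigma$ sent none of the $a$ distinguished rows into the $b$ distinguished columns, those $a$ rows would be matched to $a$ columns among the remaining $m-b$, forcing $a\leq m-b$; since $a+b>m$ this is impossible, so every term contains a factor from the zero block.)

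First I would record the shape of $M'$ from~\eqref{eq: generic w0 chart}: its $(i,j)$-entry is a variable when $i+j\leq n$, equals $1$ when $i+j=n+1$, and equals $0$ precisely when $i+j\geq n+2$. Consequently, for any $p$ with $1\leq p\leq n-1$, every entry of $M'$ lying in one of the rows $i\in\{\,n+1-p,\,n+2-p,\ldots,n\,\}$ and in one of the columns $j\in\{\,p+1,\,p+2,\ldots,n\,\}$ satisfies $i+j\geq (n+1-p)+(p+1)=n+2$ and is therefore zero; this is an all-zero block of $M'$ of size $p\times(n-p)$.

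Next I would track what survives the deletions defining $M'_{\{1,s\},\{p,n\}}$. Among the rows of the block, row $1$ does not occur because $n+1-p\geq 2$ (as $p\leq n-1$), and --- this is the one place the hypothesis is used --- row $s$ does not occur because $s\geq n+1-p$ would mean $s+p\geq n+1$, contradicting $s+p\leq n$; hence all $p$ block-rows persist in $M'_{\{1,s\},\{p,n\}}$. Among the columns of the block, column $p$ does not occur while column $n$ does, so exactly $n-p-1$ of the block-columns persist. (Note $p\geq 1$, and since $s\geq 2$ forces $p\leq n-2$ we also have $n-p-1\geq 1$, so the surviving block is genuinely rectangular.) We have thus produced, inside the $(n-2)\times(n-2)$ matrix $M'_{\{1,s\},\{p,n\}}$, an all-zero submatrix of size $p\times(n-p-1)$ with $p+(n-p-1)=n-1=(n-2)+1$, and the determinant fact quoted above finishes the proof.

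I expect the only real obstacle to be clerical: keeping the row and column indices straight across the two deletions and pinning down exactly where $s+p\leq n$ enters --- namely, it is precisely the condition that keeps row $s$ out of the distinguished block of rows. No deeper input is needed.
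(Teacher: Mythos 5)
Your proof is correct and is essentially the same as the paper's: both pivot on the observation that the bottom $p$ rows of $M'_{\{1,s\},\{p,n\}}$ (which persist from rows $n+1-p,\dots,n$ of $M'$ precisely because $s+p\leq n$ keeps $s$ out of that range) vanish outside their first $p-1$ columns. The paper packages this as an explicit linear dependence among those $p$ rows, while you invoke the cleaner zero-block determinant criterion; your version conveniently sidesteps the paper's slightly terse ``clearly a linear combination'' step, but the underlying mechanism is the same.
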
 

\begin{proof} 
Consider the $p\times (n-2)$ submatrix $A$ of $M'_{\{1,s\},\{p,n\}}$ consisting of the bottom $p$ many rows. Since $s\leq n-p$ by assumption, the bottom $p$ rows of $M'_{\{1,s\},\{p,n\}}$ are the same as the bottom $p$ rows of $M'$ with the $p$-th and $n$-th columns removed. From this point of view, it is easier to see that the top row of $A$ does not contain an entry of $1$, and only contains indeterminates and $0$'s. Moreover, this top row is clearly a linear combination of the $p-1$ many rows below it (when $p=1$, $A$ is just the zero row matrix). Since $M'_{\{1,s\},\{p,n\}}$ contains $p$ rows which are linearly dependent, $\det M'_{\{1,s\},\{p,n\}}=0$ as claimed. 
\end{proof}

With these preliminaries, we can prove the Proposition. 

\begin{proof}[Proof of Proposition~\ref{prop: fij inductive}]
We first prove the case $k=n$. 
From Lemmas~\ref{lemma: fij formula} and~\ref{lemma: y minor} and~\ref{lemma: zero minors}, it follows that  
\[
f_{n,\ell} = x_{2,\ell} + (-1)^{n(n-1)/2} \sum_{s=2}^{n-\ell} (-1)^{n-s} x_{s+1,\ell} \left( \sum_{p=1}^{n-1} (-1)^{p+1} x_{1,p} \det M'_{\{1,s\},\{p,n\}} \right). 
\]
On the other hand, from Lemma~\ref{lemma: zero minors} we know that some of the $\det M'_{\{1,s\},\{p,n\}}$ are equal to $0$. Hence we obtain 
\begin{equation}\label{eq: fnj second formula}
    \begin{split}
    f_{n,\ell} &= x_{2,\ell} + (-1)^{n(n-1)/2} \sum_{s=2}^{n-\ell} \sum_{p=n+1-s}^{n-1} (-1)^{n-s}(-1)^{p+1} x_{1,p} x_{s+1,\ell} \det M'_{\{1,s\},\{p,n\}} \\
    & = x_{2,\ell} + (-1)^{n(n-1)/2} \sum_{p=\ell+1}^{n-1} \sum_{s=n+1-p}^{n-\ell} (-1)^{n-s} (-1)^{p+1} x_{1,p} x_{s+1,\ell} \det M'_{\{1,s\},\{p,n\}} \\
    & = x_{2,\ell} + \sum_{p=\ell+1}^{n-1} x_{1,p} \left( (-1)^{n(n-1)/2}\sum_{s=n+1-p}^{n-\ell} (-1)^{p+1} (-1)^{n-s} x_{s+1,\ell} \det M'_{\{1,s\},\{p,n\}} \right)
    \end{split}
\end{equation}
where the second equality follows from reorganizing the summation of the indices $s$ and $p$. 
Our next step is to analyze the expressions $\det M'_{\{1,s\},\{p,n\}}$ appearing in~\eqref{eq: fnj second formula}. Since $p \leq n-1$ and $s \geq 2$, it follows that the $(n-1) \times (n-1)$ minor $M'_{s,p}$ of $M'$ has as its rightmost column the standard basis vector $(1,0,0,\cdots,0)^T$. Expanding $\det M_{k,s}$ along this column we see that 
\[
\det M'_{s,p} = (-1)^n \det M'_{\{1,s\},\{p,n\}}.
\]
Thus we can rewrite~\eqref{eq: fnj second formula} as 
\[
f_{n,\ell} = x_{2,\ell} + \sum_{p=\ell+1}^{n-1} x_{1,p} \left( (-1)^{n(n-1)/2} \sum_{s=n+1-p}^{n-\ell} (-1)^{s+p+1} x_{s+1,\ell} \det M'_{s,p} \right).
\]
In the above expression, we now analyze the term corresponding to $s=n+1-p$. Note that the $(n+1-p,p)$-th entry in the matrix $M'$ lies on the main antidiagonal and is equal to $1$. Deleting the $(n+1-p)$-th row and $p$-th column from $M'$ we see that the minor $M'_{n+1-p,p}$ is again a matrix with $1$'s along the main antidiagonal and $0$'s below it. This means 
\[
\det M'_{n+1-p,p} = (-1)^{(n-1)(n-2)/2}
\]
so we have $(-1)^{n(n-1)/2}(-1)^{n+2}x_{n+2-p,\ell}(-1)^{(n-1)(n-2)/2} = -x_{n+2-p,\ell}(-1)^{(n-2)(n+1)} = -x_{n+2-p,\ell} $. Therefore
\[
f_{n,\ell} = x_{2,\ell} + \sum_{p=\ell+1}^{n-1} x_{1,p} \left( -x_{n+2-p,\ell} + (-1)^{n(n-1)/2} \sum_{s=n+2-p}^{n-\ell} (-1)^{s+p+1} x_{s+1,\ell} \det M'_{s,p} \right),
\]
which is equal to  
\[
x_{2,\ell} -\sum_{p=\ell+1}^{n-1} x_{1,p} \left(x_{n+2-p,\ell} + (-1)^{n(n-1)/2} \sum_{s=n+2-p}^{n-\ell} (-1)^{s+p} x_{s+1,\ell} \det M'_{s,p} \right)
\]
after factoring out $-1$. The portion in brackets is exactly $f_{p,\ell}$ by Lemmas~\ref{lemma: fij formula} and~\ref{lemma: y minor}, so we have proven the desired result for the case $k=n$. 

Now we wish to prove the case for general $k$. From the form of the matrix $M'$ and its inverse $(M')^{-1}$, it is clear that the upper-right square submatrix of $(M')^{-1}$ of size $k \times k$ for $k<n$ is an inverse to the lower-left $k \times k$ submatrix of $M'$. It follows that the upper-left $k \times k$ submatrix of $(M')^{-1}\mathsf{N} M'$ can be identified, upon suitable re-naming of coordinates, with the polynomials $f_{a,b}$ which would appear in the construction for Hessenbergs in $\flag(\mathbb{C}^k)$ where $k<n$. Thus, applying our arguments above for $f_{n,\ell}$ except with $k$ replacing the value $n$, we obtain the desired result by induction. 
\end{proof} 

\begin{example}\label{remark: fnj induction}
Explicitly, for the case $k=n$, 
Proposition~\ref{prop: fij inductive} says that 
\[
f_{n,\ell} = - x_{1,n-1} f_{n-1,\ell} -  x_{1,n-2} f_{n-2,\ell} - x_{1,n-3} f_{n-3,\ell} - \cdots - x_{1,\ell+1} f_{\ell+1,\ell} + x_{2,\ell}. 
\]
\end{example}


\subsection{$I_{w_0,h}$ is a triangular complete intersection}

We now use the inductive description of the $f_{k, \ell}$'s obtained in Section~\ref{subsec: recursive def of f} to prove our main results. In fact, by the results in Section~\ref{sec: background GVD}, all we need to show is that a certain choice of ordering on $f^{w_0}_{k,\ell}$, together with a certain choice of monomial order on $\CC[\mathbf{x}_{w_0}]$, satisfy the conditions to make $I_{w_0,h}$ a triangular complete intersection in the sense of Definition~\ref{definition: lci triangular}. This is what we do below.

Let $n$ be fixed. We begin with a definition of a specific monomial order on $\CC[\mathbf{x}_{w_0}]$.  We have the following.

\begin{definition}\label{def: order}
We define a monomial order $<_n$ on $\CC [\mathbf{x}_{w_0}]$ as follows: first, we order the variables in $\mathbf{x}_{w_0}$ by 
$$x_{i,j} >_n x_{i',j'} \hspace{2mm} \textup{ if } \hspace{2mm} i<i', \hspace{2mm} \textup{ or, if} \hspace{2mm} i=i' \hspace{2mm} \textup{ and } \hspace{2mm} j<j'.
$$
Then, we define $<_n$ to be the lexicographic ordering on monomials induced by the above ordering on the variables.
\end{definition}

In other words, the leading term is the one with the highest power of $x_{1,1}$, followed by the highest power of $x_{1,2}$ to break ties, followed by (in order, reading from left to right) the other variables in the top row of $w_0M$, so $x_{1,1} >_n x_{1,2} >_n x_{1,3} >_n \cdots$, followed by the variables (also reading from left to right) in the second row, and so on. For example, when $n=4$, then the ordering of the variables is $x_{1,1} >_4  x_{1,2} >_4 x_{1,3} >_4 x_{2,1} >_4 x_{2,2} >_4 x_{3,1}.$ 

To determine whether $I_{w_0,h}$ is a triangular complete intersection, we also need an ordering on a chosen set of generators.  For this purpose, we will sequentially order the polynomials $f^{w_0}_{k,\ell}$ in the following way:  
\begin{equation}\label{eq: fij sequence} f^{w_0}_{n,1}, f^{w_0}_{n,2},\cdots, f^{w_0}_{n,n-1}, f^{w_0}_{n-1,1}, f^{w_0}_{n-1,2}, \cdots 
\end{equation}
i.e. when we visualize the $f^{w_0}_{k,\ell}$ as entries in a matrix,  we ``read from the bottom row to the top row, and left to right along rows''. 
This ordering of the polynomials, together with the choice of monomial order $<_n$, makes $I_{w_0,h}$ a triangular complete intersection. We have the following.

\begin{lemma}\label{lemma: initial term and one variable}
Let $k, \ell \in \mathbb{Z}, 1 \leq k, \ell \leq n$ and $k > \ell+1$. Then: 
\begin{enumerate} 
\item With respect to the monomial order $>_n$ as above, we have 
$$
\init_{<_n}(f^{w_0}_{k,\ell}) = - x_{n+1-k, \ell+1}.
$$
\item The variable $x_{n+1-k,\ell+1}$  does \emph{not} appear in any polynomial $f^{w_0}_{k',\ell'}$ appearing after $f^{w_0}_{k,\ell}$ in the sequence~\eqref{eq: fij sequence}. 

\item The variable $x_{n+1-k, \ell+1}$ appears precisely once in the polynomial $f^{w_0}_{k,\ell}$, and all other variables $x_{i,j}$ which appear in $f^{w_0}_{k,\ell}$ have the property that either $i=n+1-k$ and $j > \ell+1$, or, $i>n+1-k$.
\end{enumerate} 
\end{lemma}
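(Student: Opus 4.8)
The plan is to run an induction on $k$ (equivalently, to read the sequence~\eqref{eq: fij sequence} from its start) using the recursive formula of Proposition~\ref{prop: fij inductive} as the engine, with Lemma~\ref{lemma: f j+1 j} providing the base case. First I would record, as the base of the induction, that $f^{w_0}_{\ell+1,\ell} = 1$ by Lemma~\ref{lemma: f j+1 j}; this is not one of the three assertions (which require $k > \ell+1$) but it is needed to feed the recursion cleanly. For the inductive step, fix $k > \ell+1$ and write, by Proposition~\ref{prop: fij inductive},
\[
f^{w_0}_{k,\ell} = x_{n+2-k,\ell} - \sum_{p=\ell+1}^{k-1} x_{n+1-k,p}\, f^{w_0}_{p,\ell}.
\]
The $p=\ell+1$ term is $-x_{n+1-k,\ell+1}\cdot f^{w_0}_{\ell+1,\ell} = -x_{n+1-k,\ell+1}$, which will be the claimed initial term; the remaining terms $-x_{n+1-k,p} f^{w_0}_{p,\ell}$ for $p\geq \ell+2$ each carry the factor $x_{n+1-k,p}$ whose second index $p$ is strictly larger than $\ell+1$, while $f^{w_0}_{p,\ell}$ involves (by Lemma~\ref{lemma: y info ADGH}(1) together with the inductive hypothesis applied to claim (3)) only variables $x_{i,j}$ with row index $i \geq n+1-p \geq n+2-k > n+1-k$. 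Likewise the standalone term $x_{n+2-k,\ell}$ has row index $n+2-k > n+1-k$. This establishes claim (3): the only variable in $f^{w_0}_{k,\ell}$ with row index exactly $n+1-k$ is $x_{n+1-k,\ell+1}$, occurring once, and every other variable either shares that row with column index $> \ell+1$ (none, actually, from this analysis — so the ``$i = n+1-k$ and $j>\ell+1$'' alternative is just there for safety) or has strictly larger row index.

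Claim (1) then follows from claim (3) together with the definition of $<_n$: among all monomials appearing in $f^{w_0}_{k,\ell}$, the unique monomial $-x_{n+1-k,\ell+1}$ involves the variable $x_{n+1-k,\ell+1}$, which is $>_n$ every variable $x_{i,j}$ with $i > n+1-k$ (smaller row index wins) and $>_n$ every variable $x_{n+1-k,j}$ with $j > \ell+1$ (same row, smaller column wins); hence in the lexicographic order $<_n$ the degree-one monomial $x_{n+1-k,\ell+1}$ dominates every other monomial of $f^{w_0}_{k,\ell}$, since all other monomials are products of strictly $<_n$-smaller variables. A small point to be careful about: one must confirm no other monomial of $f^{w_0}_{k,\ell}$ is a power of $x_{n+1-k,\ell+1}$ or a monomial in which $x_{n+1-k,\ell+1}$ appears — but claim (3) says $x_{n+1-k,\ell+1}$ appears exactly once, in the linear term, so this is automatic.

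For claim (2): a polynomial $f^{w_0}_{k',\ell'}$ comes after $f^{w_0}_{k,\ell}$ in~\eqref{eq: fij sequence} iff $k' < k$, or $k' = k$ and $\ell' > \ell$. If $k' < k$, then by claim (3) applied to $f^{w_0}_{k',\ell'}$ (valid since we can take $k'$ within the same induction, or simply invoke Lemma~\ref{lemma: y info ADGH}(1) via Lemma~\ref{lemma: fij formula}), every variable in $f^{w_0}_{k',\ell'}$ has row index $\geq n+1-k' > n+1-k$, so $x_{n+1-k,\ell+1}$ cannot appear. If $k' = k$ and $\ell' > \ell$, then by claim (3) the variables in $f^{w_0}_{k,\ell'}$ with row index $n+1-k$ are: the single variable $x_{n+1-k,\ell'+1}$, plus possibly others in that row with column index $> \ell'+1$; since $\ell+1 < \ell'+1$, the variable $x_{n+1-k,\ell+1}$ is not among them, so it does not appear. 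The main obstacle is not conceptual but bookkeeping: one must be scrupulous that the induction on $k$ proves claims (1), (2), (3) simultaneously in the right dependency order (claim (3) for smaller $k$ feeds claims (1) and (2) for the current $k$), and that the row-index bound ``variables in $f^{w_0}_{p,\ell}$ have row $\geq n+1-p$'' is correctly extracted — this follows from Lemma~\ref{lemma: fij formula}, which writes $f^{w_0}_{p,\ell} = x_{n+2-p,\ell} + \sum_{s=n+2-p}^{n-\ell} x_{s+1,\ell}\, y_{n+1-p,n+1-s}$, combined with Lemma~\ref{lemma: y info ADGH}(1) giving that $y_{n+1-p,n+1-s}$ depends only on $x_{i,j}$ with $i \geq n+1-p$.
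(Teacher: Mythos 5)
Your argument follows the paper's approach: expand $f^{w_0}_{k,\ell}$ via Proposition~\ref{prop: fij inductive}, isolate the $p=\ell+1$ summand as $-x_{n+1-k,\ell+1}$ using Lemma~\ref{lemma: f j+1 j}, and bound the row indices of all remaining variables through Lemma~\ref{lemma: fij formula} and Lemma~\ref{lemma: y info ADGH}(1). The framing as an induction on $k$ with claim (3) as the hypothesis, versus the paper's direct appeal to the row-index bound, is a cosmetic difference; the paper also proves (1) first and deduces (3), while you go (3) then (1), which is equally valid. The core argument is correct.

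One parenthetical assertion, however, is false and should be deleted: you claim that the ``$i = n+1-k$ and $j > \ell+1$'' branch of claim (3) is vacuous (``none, actually'') and ``just there for safety.'' It is neither. When $k > \ell+2$, the factors $x_{n+1-k,p}$ with $\ell+2 \leq p \leq k-1$ that multiply $f^{w_0}_{p,\ell}$ in the recursion all have row index exactly $n+1-k$ and column index $p > \ell+1$, and they genuinely appear in $f^{w_0}_{k,\ell}$: there is no cancellation, since the $x_{n+1-k,p}$ are distinct for distinct $p$ and $f^{w_0}_{p,\ell}$ contributes only variables with row $\geq n+1-p > n+1-k$. Your own preceding sentence identifies these factors correctly, and your proof of claim (1) relies on exactly this branch when you note $x_{n+1-k,\ell+1} >_n x_{n+1-k,j}$ for $j > \ell+1$, so the parenthetical contradicts the surrounding argument. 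A smaller slip: inducting on increasing $k$ amounts to reading the sequence~\eqref{eq: fij sequence} from its \emph{end}, not its start, since that sequence is listed with $k=n$ first and $k$ decreasing.
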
 

\begin{remark}\label{remark: is lci} 
It follows from Remark \ref{remark: complete intersection} that the height of $I_{w_0,h}$ is equal to the number of $f_{k,\ell}^{w_0}$ generating it.  The claims (1) and (2) of Lemma~\ref{lemma: initial term and one variable} therefore imply, according to Definition~\ref{definition: lci triangular}, that $I_{w_0,h}$ is a triangular complete intersection.  
\end{remark} 

\begin{proof}[Proof of Lemma~\ref{lemma: initial term and one variable}]
From Proposition~\ref{prop: fij inductive} we see that the summand in the expression on the RHS of~\eqref{eq: inductive fij} corresponding to the index $p=\ell+1$ is of the form $-x_{n+1-k, \ell+1} \cdot f^{w_0}_{\ell+1,\ell}$.  From Lemma~\ref{lemma: f j+1 j} we know that $f^{w_0}_{\ell+1,\ell}=1$, so the $p=\ell+1$ summand is in fact exactly 
$- x_{n+1-k, \ell+1}$, the claimed initial term. To show that this is indeed the initial term, it would suffice to see that both $x_{n+2-k,\ell}$ - the first expression in the RHS of~\eqref{eq: inductive fij} -- and all summands corresponding to $\ell+2 \leq p \leq k-1$ contain only variables $x_{i,j}$ satisfying either $i > n+1-k$, or, $i=n+1-k$ and $j>\ell+1$. By definition of the monomial order $<_n$, this would mean that all other variables appearing are strictly less than $x_{n+1-k, \ell+1}$ and hence the initial term as claimed. First, the variable $x_{n+2-k,\ell}$ has first index $n+2-k>n+1-k$, so $x_{n+2-k,\ell}<x_{n+1-k, \ell+1}$ as desired. Next we analyze the expression $x_{n+1-k,p} \cdot f_{p,\ell}$ for $\ell+2 \leq p \leq k-1$. Since $p>\ell+1$ by assumption, $x_{n+1-k,p} < x_{n+1-k,\ell+1}$ as desired. Finally, we analyze the variables appearing in $f_{p,\ell}$ for $\ell+2\leq p \leq k-1$. From Lemma~\ref{lemma: fij formula}, equation~\eqref{eq: fkl in x and y}  (replacing $k$ with $p$), and Lemma~\ref{lemma: y info ADGH}(1), it is then straightforward to see that the variables $x_{i,j}$ appearing in $f_{p,\ell}$ all satisfy $i > n+1-k$, so are less than $x_{n+1-k,\ell+1}$ as desired. This proves (1). 

The statement (3) follows immediately from the proof of (1). So it remains to show the statement (3). We have just seen that for a given $f^{w_0}_{k,\ell}$, the only variables $x_{i,j}$ which appear are $x_{n+1-k,\ell+1}$ and those which are smaller than $x_{n+1-k,\ell+1}$ with respect to $<_n$. For any $f^{w_0}_{k',\ell'}$ with either $k'=k$ and $\ell'>\ell$, or, $k'<k$, it is straightforward to see that $x_{n+1-k,\ell+1}$ cannot appear. This proves (3). 
\end{proof}

It now follows easily that we have a Gr\"obner basis. The precise statement is below.

\begin{corollary}\label{cor: fkl are a GB}
Let $n \in \mathbb{Z}, n \geq 3$.  Let $h: [n] \to [n]$ be an indecomposable Hessenberg function. Let $\{f^{w_0}_{k, \ell} \, \mid \, k > h(\ell)\}$ be the generators of $I_{w_0,h}$ in $\mathbb{C}[\mathbf{x}_{w_0}]$. 
Then the $\{f^{w_0}_{k,\ell} \, \mid \, k > h(\ell)\}$ form a Gr\"obner basis for $I_{w_0,h}$ with respect to the monomial order $<_n$ of Definition~\ref{def: order}. 
\end{corollary}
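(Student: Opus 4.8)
The plan is to deduce Corollary~\ref{cor: fkl are a GB} directly from Theorem~\ref{theorem: lci triangular groebner} by verifying that the chosen generating set and monomial order realize $I_{w_0,h}$ as a triangular complete intersection in the sense of Definition~\ref{definition: lci triangular}. Concretely, the generators of $I_{w_0,h}$ are the $f^{w_0}_{k,\ell}$ with $k > h(\ell)$; since $h$ is indecomposable we have $h(\ell) \geq \ell+1$ for all $\ell \leq n-1$, so every such index pair satisfies $k > \ell + 1$, which is exactly the hypothesis under which Lemma~\ref{lemma: initial term and one variable} applies. Order this generating set as the subsequence of~\eqref{eq: fij sequence} consisting only of those $f^{w_0}_{k,\ell}$ with $k > h(\ell)$, and use the monomial order $<_n$ of Definition~\ref{def: order}.

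First I would invoke Lemma~\ref{lemma: initial term and one variable}(1) to see that $\init_{<_n}(f^{w_0}_{k,\ell}) = -x_{n+1-k,\ell+1}$, which is a unit multiple of the indeterminate $x_{i_j} := x_{n+1-k,\ell+1}$, giving condition (1) of Definition~\ref{definition: lci triangular}. Next, Lemma~\ref{lemma: initial term and one variable}(2) says $x_{n+1-k,\ell+1}$ does not appear in any $f^{w_0}_{k',\ell'}$ occurring later in the sequence~\eqref{eq: fij sequence}, and in particular in any later element of our chosen subsequence; this is precisely condition (2). The only remaining point needed to apply Theorem~\ref{theorem: lci triangular groebner} is that $\mathrm{ht}(I_{w_0,h})$ equals the number of generators $f^{w_0}_{k,\ell}$; this is recorded in Remark~\ref{remark: is lci}, which itself follows from Remark~\ref{remark: complete intersection} (the initial ideal is generated by indeterminates, hence a complete intersection, hence $I_{w_0,h}$ is too, and homogeneity with respect to the nonstandard grading of Lemma~\ref{homogeneous} forces the heights to agree). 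Hence $I_{w_0,h}$ is a triangular complete intersection of height equal to its number of generators, and Theorem~\ref{theorem: lci triangular groebner} yields that $\{f^{w_0}_{k,\ell} \mid k > h(\ell)\}$ is a Gr\"obner basis with respect to $<_n$, with $\init_{<_n}(I_{w_0,h})$ an ideal of indeterminates.

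Since all the substantive work has already been done in Section~\ref{sec: fij} and Section~\ref{sec: background GVD}, there is no real obstacle here; the proof is essentially a bookkeeping argument. The one subtlety worth stating explicitly in the write-up is that the distinct initial terms $-x_{n+1-k,\ell+1}$ are pairwise distinct indeterminates as $(k,\ell)$ ranges over the admissible pairs (which is immediate from the injectivity of $(k,\ell) \mapsto (n+1-k, \ell+1)$), so that Theorem~\ref{theorem: lci triangular groebner} applies verbatim; the theorem's proof via \cite[Chapter 2.9, Proposition 4]{CLO} on relatively prime leading terms then goes through. I would keep the proof to a few lines, simply citing Lemma~\ref{lemma: initial term and one variable}, Remark~\ref{remark: is lci}, and Theorem~\ref{theorem: lci triangular groebner}.
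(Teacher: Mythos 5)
Your proof is correct and follows the same chain of reasoning as the paper: invoke Lemma~\ref{lemma: initial term and one variable} and Remark~\ref{remark: is lci} to verify that $I_{w_0,h}$, with the generators taken as the subsequence of~\eqref{eq: fij sequence} indexed by $k > h(\ell)$ and the monomial order $<_n$, is a triangular complete intersection, then apply Theorem~\ref{theorem: lci triangular groebner}. Your added remark about pairwise distinctness of the leading indeterminates is correct but already implicit in conditions (1) and (2) of Definition~\ref{definition: lci triangular} (if $x_{i_j}=x_{i_m}$ with $j<m$, condition (2) would contradict condition (1) for $f_m$), so it is harmless bookkeeping rather than a genuinely new step.
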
 

\begin{proof} 
Lemma~\ref{lemma: initial term and one variable} and Remark~\ref{remark: is lci} shows that $I_{w_0,h}$ is a triangular complete intersection with respect to the subsequence of generators $f_{k,\ell}^{w_0}$ of
\[f^{w_0}_{n,1}, f^{w_0}_{n,2},\cdots, f^{w_0}_{n,n-1}, f^{w_0}_{n-1,1}, f^{w_0}_{n-1,2}, \cdots \]
which satisfy $k>h(\ell)$, and our choice of monomial order $<_n$. The claim follows immediately from Theorem~\ref{theorem: lci triangular groebner}.

\end{proof}

We can also conclude that $I_{w_0,h}$ is GVD.

\begin{corollary}\label{cor: Hess patch is GVD}
In the same setting as above, $I_{w_0,h}$ is geometrically vertex decomposable. 
\end{corollary}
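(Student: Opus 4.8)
The plan is to observe that this corollary follows immediately by combining two results already established in the excerpt. Specifically, Corollary~\ref{cor: fkl are a GB} shows that the generators $\{f^{w_0}_{k,\ell} \mid k > h(\ell)\}$, ordered via the subsequence of~\eqref{eq: fij sequence} and together with the monomial order $<_n$ of Definition~\ref{def: order}, exhibit $I_{w_0,h}$ as a triangular complete intersection in the sense of Definition~\ref{definition: lci triangular}. Since $<_n$ is by construction a lexicographic order, the hypothesis of Corollary~\ref{corollary: lci triangular gvd} is met.

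First I would state explicitly that $<_n$ is a lexicographic monomial order on $\CC[\mathbf{x}_{w_0}]$ (this is literally how it is defined in Definition~\ref{def: order}: a lexicographic order induced by a total order on the variables), so that the hypotheses of Corollary~\ref{corollary: lci triangular gvd} — namely, that $I$ be a triangular complete intersection with respect to a lexicographic order — are satisfied. Then I would invoke Lemma~\ref{lemma: initial term and one variable} together with Remark~\ref{remark: is lci} (exactly as in the proof of Corollary~\ref{cor: fkl are a GB}) to conclude that $I_{w_0,h}$ is a triangular complete intersection of height equal to the number of generators $f^{w_0}_{k,\ell}$ with $k > h(\ell)$, with respect to $<_n$. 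Applying Corollary~\ref{corollary: lci triangular gvd} then yields that $I_{w_0,h}$ is geometrically vertex decomposable.

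There is essentially no obstacle here: the genuine content was already carried out in Section~\ref{sec: background GVD} (the abstract fact that triangular complete intersections with respect to a lex order are GVD, which rests on Lemma~\ref{lem:initGVD} and hence on \cite[Proposition 2.14]{KleRaj}) and in Section~\ref{sec: fij} (the verification, via Proposition~\ref{prop: fij inductive} and Lemma~\ref{lemma: initial term and one variable}, that $I_{w_0,h}$ is indeed such a triangular complete intersection). If anything needs care, it is only the bookkeeping point — already handled in the proof of Corollary~\ref{cor: fkl are a GB} — that one must pass to the \emph{subsequence} of~\eqref{eq: fij sequence} consisting of those $f^{w_0}_{k,\ell}$ with $k > h(\ell)$, and check that the triangularity conditions (1) and (2) of Definition~\ref{definition: lci triangular} are inherited by this subsequence; but this is immediate from parts (1) and (2) of Lemma~\ref{lemma: initial term and one variable}, which hold for each individual generator regardless of which others are present. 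Hence the proof is a one-line citation.

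\begin{proof}
The monomial order $<_n$ of Definition~\ref{def: order} is, by construction, a lexicographic order on $\CC[\mathbf{x}_{w_0}]$. As in the proof of Corollary~\ref{cor: fkl are a GB}, Lemma~\ref{lemma: initial term and one variable} together with Remark~\ref{remark: is lci} shows that $I_{w_0,h}$ is a triangular complete intersection with respect to $<_n$ and the subsequence of generators $f^{w_0}_{k,\ell}$ with $k > h(\ell)$ drawn from the sequence~\eqref{eq: fij sequence}. The result now follows from Corollary~\ref{corollary: lci triangular gvd}.
\end{proof}
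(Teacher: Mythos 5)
Your proof is correct and matches the paper's argument exactly: the paper also deduces Corollary~\ref{cor: Hess patch is GVD} immediately from the fact that $I_{w_0,h}$ is a triangular complete intersection with respect to the lexicographic order $<_n$, by invoking Corollary~\ref{corollary: lci triangular gvd}. Your extra remarks about passing to the subsequence and confirming that $<_n$ is lexicographic are fine and only make explicit what the paper leaves implicit.
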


\begin{proof} 
Since $I_{w_0,h}$ is a triangular complete intersection with respect to $<_n$, the claim follows immediately from Corollary~\ref{corollary: lci triangular gvd}. 
\end{proof}

\begin{remark}\label{remark: other permutations}
The results above show that the initial ideal of $I_{w_0,h}$ with respect to the chosen monomial order is an ideal of indeterminates. In particular, it is a square-free monomial ideal.  While we do not expect this to hold true in all $w$-charts for $w \neq w_0$, we find it natural to ask the following: for which pairs $(h,w)$ of Hessenberg functions and permutations $w$ is it true that there exists a choice of monomial order $<$ on $\mathbb{C}[\mathbf{x}_w]$ such that $\init_{<}(I_{w,h})$ is a square-free monomial ideal (i.e. a Stanley-Reisner ideal)? 
Preliminary results from some Macaulay2 computations (for cases with small values of $n$) tentatively suggest that for any Hessenberg function $h$, there do exist some $w\neq w_0$ and some choices of monomial order such that the initial ideal of $I_{w,h}$ is square-free. For instance, when $n=4$, we have found the following. We say that a permutation is $[3,2,1]$-embedding if $w = [a_1 \, \, a_2 \, \, a_3 \, \, a_4]$ in one-line notation where there exists some choice of $1 \leq i < j<k \leq 4$ such that $a_i > a_j > a_k$. Our numerical explorations for this case have shown that if $w$ is $[3,2,1]$-embedding, then there exists a lexicographic monomial order $<$ on $\mathbb{C}[\mathbf{x}_w]$ such that $\init_{<}(I_{w,h})$ is a square-free monomial ideal. 
We leave further exploration of the general case to future work.  
\end{remark}

\section{Frobenius splittings and triangular complete intersections}\label{sec: frobenius}

In this section, we provide an application of our results on triangular complete intersections to the study of Frobenius splittings. Since we saw in the previous section that Hessenberg patch ideals for the $w_0$-patch are triangular complete intersections, our observations immediately apply also to this case. 

 Before stating our results, we take a moment to recall some of the context and motivation. 
Recall that the notion of a Frobenius splitting is defined in the setting of schemes defined over characteristic $p>0$; however, they are also useful 
in the study of schemes in characteristic $0$. 
For instance, Frobenius splittings were used by Brion and Kumar in \cite{Brion-Kumar} to prove that Schubert varieties in $G/B$ are reduced, normal, and Cohen-Macaulay. This is also why we are interested in Frobenius splittings in this manuscript.

One motivation for the considerations in this section is the following. It is known that there exists a Frobenius splitting of $G/B$ which compatibly splits all Schubert and opposite Schubert varieties \cite[Theorem 2.2.5]{Brion-Kumar}. In fact, the union of the Schubert and opposite Schubert divisors defines an anticanonical divisor of $G/B$, and this is related to the fact that the above Frobenius splitting is defined by sections of the $(p-1)$-st power of an anticanonical bundle \cite[Section 1.3]{Brion-Kumar}. It is natural to ask whether there is an analogous statement that remains true for Hessenberg varieties, leading us to the following. 

\begin{question}\label{Frobenius_question}
Does there exist a Frobenius splitting of $G/B$ which simultaneously compatibly splits all of the Hessenberg varieties $\hess(\mathsf{N},h)$ for indecomposable $h$? If so, can we construct an explicit such Frobenius splitting? 
\end{question}

Unfortunately, the approach to the above question regarding $G/B$ and its Schubert and opposite Schubert varieties as explained in \cite{Brion-Kumar} (which synthesizes results from various authors), constructs Frobenius splittings using Bott-Samelson varieties -- a technique which does not easily generalize to our setting of Hessenberg varieties. On the other hand, there is also a \emph{local} theory of Frobenius splittings, as discussed by Knutson in \cite{Knut, Knut2}; this local theory is better suited for studying our local patches $\hess(\mathsf{N},h)\cap \mathcal{N}_{w_0}$. This is the route we take below. In particular, a Frobenius splitting will restrict to open sets, in the sense that if there is a Frobenius splitting on an ambient space, then it must restrict to a Frobenius splitting on an open dense affine coordinate chart. Hence, a \emph{necessary} condition for Question \ref{Frobenius_question} to have a positive answer is that there exists a Frobenius splitting of $\mathcal{N}_{w_0}$ which simultaneously compatibly splits $\hess(\mathsf{N},h) \cap \mathcal{N}_{w_0}$ for all indecomposable $h$. We answer this positively in Corollaries~\ref{cor: Hessenberg_split} and~\ref{cor: simultaneous split} below.

We now shift our attention to the results we prove in this section. Our approach is similar to that of Sections~\ref{sec: background GVD} and~\ref{sec: fij}; namely, we first prove general statements about triangular complete intersections in Section~\ref{subsec: Frob of triangular lci}, and then apply these general results to the Hessenberg setting in Section~\ref{subsec: Hess Frob}. More precisely, let $R = \mathbb{K}[x_1,\ldots,x_N]$ be a polynomial ring over $\mathbb{K}$ a field of positive characteristic $p$ and suppose $R$ is equipped with a monomial order $<$. Suppose that an ideal $I \subseteq R$ is a triangular complete intersection with respect to the given monomial order. 

In this setting, in Section~\ref{subsec: Frob of triangular lci} we first construct an \emph{explicit} Frobenius splitting $\varphi_f$ of $I$. Secondly, we show that, with respect to this explicit Frobenius splitting $\varphi_f$, there is a natural family of compatibly split varieties, with respect to our Frobenius splitting $\varphi_f$. These compatibly split ideals are naturally related to one another by the partial order given by inclusion. In Section~\ref{subsec: Hess Frob} we apply these general results to the case of local Hessenberg patches to obtain results in characteristic $0$. (An analogous method would also apply to general triangular complete intersections, but we have restricted to Hessenberg patches for our discussion.)

\subsection{Frobenius splittings of  triangular complete intersections}\label{subsec: Frob of triangular lci} 

We begin with a very brief account of the theory of Frobenius splittings, mainly to introduce terminology and establish notation. 
Recall that a commutative ring $R$ is \textbf{reduced} if the map $x\rightarrow x^n$ sends only $0$ to $0$ for any positive integer $n$. 
When $R$  is an $\mathbb{F}_p$-algebra for $p$ a prime, then the $p$-th power map (also called the \textbf{Frobenius map}) is $\mathbb{F}_p$-linear, and if $R$ is reduced, then the kernel of the Frobenius map $x \mapsto x^p$ is equal to $0$, i.e., the Frobenius map is injective. Note that when this injectivity holds, there exists a one-sided \textit{linear} inverse to the Frobenius map; such an inverse can be roughly interpreted as a sort of ``$p$-th root'' map. This motivates the next definition \cite{Brion-Kumar}.

\begin{definition}\label{definition: Frobenius} 
A \textbf{Frobenius splitting} of an $\mathbb{F}_p$-algebra $R$ is a function $\varphi: R\rightarrow R$ which satisfies:

\begin{enumerate}
\item $\varphi(a+b) = \varphi(a)+\varphi(b)$, 
\item $\varphi(a^pb) = a\varphi(b)$, and 
\item $\varphi(1)=1$. 
\end{enumerate}
\end{definition}

\begin{remark} 
Note that by taking $b=1$, conditions (2) and (3) of Definition~\ref{definition: Frobenius} together imply that $\varphi(a^p) = a$. Thus a Frobenius splitting $\varphi$ as in Definition~\ref{definition: Frobenius} is a one-sided inverse to the Frobenius map, as suggested above. Moreover, since $a^p=a$ for $a \in \mathbb{F}_p$, we also see from (1) and (2) that $\varphi$ is $\mathbb{F}_p$-linear. 
\end{remark} 

\begin{remark} 
In fact, Definition~\ref{definition: Frobenius} can be generalized to schemes, but this is not necessary for the purposes of this manuscript. 
\end{remark} 

We also say that an ideal $I\subset R$ is \textbf{compatibly (Frobenius) split} with respect to a given Frobenius splitting $\varphi$ if $\varphi(I)\subset I$. Such ideals have useful properties, detailed more fully in \cite{Knut}. For example,  $I+J$ and $I\cap J$ are compatibly split if $I$ and $J$ are. Note that if $I$ is compatibly split, then the quotient $R/I$ inherits an induced Frobenius splitting $\overline{\varphi}$, so any affine variety defined by a compatibly split ideal is itself Frobenius split (i.e., its coordinate ring is Frobenius split).

We now focus on the case where $R := \mathbb{K}[x_1,\dots,x_n]$ with $\mathbb{K}$ a field of positive characteristic $p$. The \textbf{standard splitting} $\varphi_{std}$ of $R$ is one of the first and most straightforward examples of a Frobenius splitting and is  defined as follows. On a monomial $m$ in $R$, we define 
$$
\varphi_{std}(m) := 
\begin{cases} 
\hfill  \sqrt[p]{m}    \hfill & \text{ if $m$ is a $p$-th power (i.e. there exists $y$ with $y^p=m$)}\\
      \hfill 0 \hfill & \text{ otherwise.}
      \end{cases} 
$$
 The map $\varphi_{std}$ is then extended linearly to all of $R$. It is not difficult to check that the resulting map $\varphi_{std}: R \to R$ satisfies all the conditions of being a Frobenius splitting. (We note that the ideals that are compatibly split by the standard splitting are precisely the Stanley-Reisner ideals \cite[Example 1.1.5]{Brion-Kumar}.)  
Building on this idea, we define the \textbf{trace map} $\tr(\boldsymbol{\cdot})$ as follows. 
First, on a monomial $m$, we define 
\[
\tr(m) :=
  \begin{cases}
      \hfill  \displaystyle\frac{\sqrt[p]{m\prod_{i=1}^n x_i}}{\prod_{i=1}^n x_i}    \hfill & \text{ if $m\prod_{i=1}^n x_i$ is a $p$-th power}\\
      \hfill 0 \hfill & \text{ otherwise}
  \end{cases}
\]
where the product $\prod_{i=1}^n x_i$ appearing above is the product of all the indeterminates in the ring $R$. Second, we define $\tr(\boldsymbol{\cdot}): R \to R$ by extending linearly to all of $R$.   This trace map is not necessarily a Frobenius splitting (however, it is known to be a \textbf{near splitting}, which by definition means that it satisfies the first two conditions of Definition \ref{definition: Frobenius} \cite[Section 1.3.1]{Brion-Kumar}). The trace map can nevertheless be used to build Frobenius splittings, in the following sense: if it is known that if $\tr(f)=1$ for some $f\in R$, then the map 
\begin{equation}\label{eq: def varphi f}
\varphi_f(g) :=\tr(fg)
\end{equation}
defines a Frobenius splitting of $R$ \cite[Section 1.3.1]{Brion-Kumar}. (In fact, it turns out that every Frobenius splitting of $R$ is of this form \cite[Section 1.3.1]{Brion-Kumar} when $\mathbb{K}$ is a perfect field over $\mathbb{F}_p$.) As an example, the reader can easily check that for $f= \prod_{i=1}^n x_i^{p-1}$, then $\varphi_f = \varphi_{std}$ is the standard splitting.

With the above example as motivation and by looking at initial terms, we come to the following lemma.

\begin{lemma}\label{trace_degeneration}
Let $g\in R=\mathbb{K}[x_1,\dots,x_n]$ where $\mathbb{K}$ a field of positive characteristic $p$ and $<$ be a lexicographic monomial order on $R$ such that $\init_<(g)= \prod_{i=1}^nx_i$. Let $f:=g^{p-1}$. Then $\varphi_{f}$ defines a Frobenius splitting of $R$.
\end{lemma}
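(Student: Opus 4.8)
The plan is to verify that $\varphi_f$ with $f = g^{p-1}$ satisfies the three axioms of Definition~\ref{definition: Frobenius}, and by the discussion preceding the lemma (that $\varphi_f(h) := \tr(fh)$ is a Frobenius splitting whenever $\tr(f) = 1$, since $\tr(\boldsymbol{\cdot})$ is a near splitting), the only thing that actually needs checking is the normalization $\tr(g^{p-1}) = 1$. So the heart of the argument is: show $\tr(g^{p-1}) = 1$.

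First I would expand $g$ as $g = \init_<(g) + (\text{lower terms}) = \prod_{i=1}^n x_i + \sum_{m < \prod x_i} c_m m$, and then expand $g^{p-1}$ multinomially. By $\mathbb{K}$-linearity of $\tr$, we have $\tr(g^{p-1}) = \sum \tr(\text{monomials appearing in } g^{p-1})$, weighted by the multinomial/product coefficients. The key observation is that $\tr(m)$ is nonzero precisely when $m \cdot \prod_{i=1}^n x_i$ is a $p$-th power, i.e. when every exponent in $m$ is $\equiv -1 \equiv p-1 \pmod p$; in that case $\tr(m) = \sqrt[p]{m \prod x_i}/\prod x_i$. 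The monomial $\big(\prod_{i=1}^n x_i\big)^{p-1} = \prod_i x_i^{p-1}$ certainly has this property, and $\tr\big(\prod_i x_i^{p-1}\big) = \sqrt[p]{\prod_i x_i^p}/\prod_i x_i = \prod_i x_i / \prod_i x_i = 1$. So the leading term of $g^{p-1}$ contributes exactly $1$ to $\tr(g^{p-1})$, and it remains to argue that all other contributions vanish.

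The main obstacle is precisely this last point: a priori, some non-leading monomial $m'$ occurring in the expansion of $g^{p-1}$ could also satisfy "every exponent $\equiv p-1 \pmod p$" and hence contribute a nonzero term to $\tr(g^{p-1})$, potentially spoiling the normalization. The way I would rule this out is a degree/ordering argument: every monomial appearing in $g^{p-1}$ has total degree at most $(p-1)n$ (the degree of the leading term), with equality only for the leading term $\prod_i x_i^{p-1}$ itself, provided $\init_<(g)$ is the (unique) top-degree term — which one can arrange, or one argues more carefully using the lexicographic structure. Actually the cleaner route: if $m' \neq \prod_i x_i^{p-1}$ appears in $g^{p-1}$ and has all exponents $\equiv p-1 \pmod p$, then each exponent is one of $p-1, 2p-1, 3p-1, \dots$; since $m' <_{} \prod_i x_i^{p-1}$ in the lex order (as $\prod x_i$ is the $<$-leading term of $g$, so $\prod_i x_i^{p-1}$ is the $<$-leading term of $g^{p-1}$), and lex comparison proceeds variable by variable, one shows the first variable where $m'$ differs must have a \emph{strictly smaller} exponent than $p-1$, contradicting that its exponent is $\equiv p-1 \pmod p$ and nonnegative unless it is $p - 1$ — forcing a contradiction down the line. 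I would write this out as: any monomial $\neq \prod x_i^{p-1}$ with all exponents $\equiv -1 \pmod p$ and each exponent $\le$ the corresponding exponent pattern forced by lex-minimality cannot occur; hence $\tr$ annihilates all such terms and $\tr(g^{p-1}) = 1$.

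Once $\tr(f) = 1$ is established, I would conclude by invoking the cited fact from \cite[Section 1.3.1]{Brion-Kumar}: since $\tr$ is a near splitting (satisfies conditions (1),(2) of Definition~\ref{definition: Frobenius}), the map $\varphi_f(g) = \tr(fg)$ automatically satisfies (1) and (2) as well, and condition (3), $\varphi_f(1) = \tr(f) = 1$, is exactly the normalization just proved. Therefore $\varphi_f$ is a Frobenius splitting of $R$, completing the proof.
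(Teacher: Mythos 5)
Your proof is correct and follows essentially the same route as the paper: reduce to showing $\tr(g^{p-1})=1$, note $\init_<(g^{p-1})=\prod_i x_i^{p-1}$, and argue that any strictly smaller monomial $m$ (in lex order) has its first differing exponent strictly below $p-1$ and hence not $\equiv -1\pmod p$, so $\tr(m)=0$. The degree-based excursion you consider first is unnecessary (and not generally valid for lex), but you correctly abandon it for the lex-comparison argument, which is exactly the step the paper leaves implicit.
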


\begin{proof}
Since $\varphi_f$ is a near-splitting, it suffices to check that $\varphi_f(1) = \tr(f)=1$. 
Since $\init_<(g)=\prod_{i=1}^nx_i$, we must have that $\init_<(f) = \init_<(g^{p-1})=\prod_{i=1}^nx_i^{p-1}$. Therefore, for any other term $m$ of $g^{p-1}$, we must have $m<\prod_{i=1}^nx_i^{p-1}$. But then  $m\prod_{i=1}^nx_i$ is not a $p$-th power, so $\tr(m)=0$. On the other hand $\tr(\prod_{i=1}^nx_i^{p-1})=1$, proving that $\tr(g^{p-1})=\tr(f)=\varphi_f(1)=1$, as required.
\end{proof}

With Lemma~\ref{trace_degeneration} in hand, we now show that triangular complete intersections in $R=\mathbb{K}[x_1,\ldots,x_N]$ can be Frobenius split. Let $I=\langle f_1,\ldots,f_n\rangle \subset \mathbb{K}[x_1,\ldots, x_N]$ be a triangular complete intersection with respect to a monomial order $<$. By a suitable reordering of coordinates, we may assume without loss of generality that $\init_<(f_j)= u_j x_j$ for $1 \leq j \leq n$ and some non-zero constants $u_j \in \mathbb{K}$. Define the polynomial $F_I$ by 
\begin{equation}\label{eq: def FI}
F_I:= \Big{(}\prod_{i>n}x_i\Big{)}\Big{(}\prod_{j\leq n} u_j^{-1} f_j\Big{)} \in \mathbb{K}[x_1,\ldots, x_N].
\end{equation}

It is not difficult to see that $\init_{<}(F_I) = \prod_{i=1}^N x_i$, i.e., $F_I$ is defined so that the initial term of $F_I$ is the product of all the indeterminates in $\mathbb{K}[x_1,\ldots, x_N]$. It follows immediately from Lemma~\ref{trace_degeneration} that we can construct an explicit Frobenius splitting as follows.

\begin{theorem}\label{theorem: FI splitting for triangular}
Let $\mathbb{K}$ a field of positive characteristic $p$. Let $I$ be a triangular complete intersection in $\mathbb{K}[x_1,\ldots,x_N]$ with respect to a monomial order $<$ and assume $\init_{<}(f_j)=u_j x_j$ for $1 \leq j \leq n \leq N$. Let $F_I$ be the polynomial defined by~\eqref{eq: def FI}. Then $\varphi_{F_I^{p-1}} := \tr(F_I^{p-1}\boldsymbol{\cdot})$
is a Frobenius splitting of $\mathbb{K}[x_1,\ldots, x_N]$. 
\end{theorem}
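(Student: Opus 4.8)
The plan is to show that this is essentially an immediate consequence of Lemma~\ref{trace_degeneration} applied to the polynomial $F_I$, so the only real content is verifying the hypothesis $\init_<(F_I) = \prod_{i=1}^N x_i$. First I would recall the setup: after the harmless reordering of coordinates, $I = \langle f_1, \ldots, f_n \rangle$ with $\init_<(f_j) = u_j x_j$ for $1 \le j \le n$, and $F_I$ is defined by~\eqref{eq: def FI} as $\big(\prod_{i>n} x_i\big)\big(\prod_{j \le n} u_j^{-1} f_j\big)$. Since a lexicographic monomial order is multiplicative, the initial term of a product is the product of the initial terms, so
\[
\init_<(F_I) = \Big(\prod_{i>n} x_i\Big) \Big(\prod_{j \le n} u_j^{-1} \init_<(f_j)\Big) = \Big(\prod_{i>n} x_i\Big)\Big(\prod_{j \le n} u_j^{-1} u_j x_j\Big) = \prod_{i=1}^N x_i.
\]
This is exactly the hypothesis ``$\init_<(g) = \prod_{i=1}^N x_i$'' needed to invoke Lemma~\ref{trace_degeneration} with $g = F_I$ and $f = g^{p-1} = F_I^{p-1}$; the lemma then yields directly that $\varphi_{F_I^{p-1}} = \tr(F_I^{p-1}\,\boldsymbol{\cdot})$ is a Frobenius splitting of $\mathbb{K}[x_1,\ldots,x_N]$, which is the claim.

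The one subtlety I would flag — and which I expect to be the main (though still minor) obstacle — is making sure the reindexing in the statement of Lemma~\ref{trace_degeneration} matches the situation here: that lemma is phrased for a ring $\mathbb{K}[x_1,\dots,x_n]$ in which the initial term is the product of \emph{all} the variables, whereas here the ambient ring $\mathbb{K}[x_1,\dots,x_N]$ has $N \ge n$ variables, only $n$ of which arise as leading variables of the $f_j$. This is precisely why the factor $\prod_{i>n} x_i$ is built into the definition of $F_I$: it supplies the remaining indeterminates so that $\init_<(F_I)$ is genuinely the product of all $N$ variables. I would note explicitly that the reordering of coordinates used to arrange $\init_<(f_j) = u_j x_j$ does not affect whether a lexicographic order remains a monomial order of the required multiplicative type, so Lemma~\ref{trace_degeneration} applies verbatim with $n$ there replaced by $N$ here.

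Thus the proof is short: compute $\init_<(F_I)$ using multiplicativity of the lex order together with conditions (1) and (2) of Definition~\ref{definition: lci triangular}, observe it equals $\prod_{i=1}^N x_i$, and apply Lemma~\ref{trace_degeneration}. No genuinely hard step is involved; the value of the theorem lies in the \emph{explicitness} of the resulting splitting $\varphi_{F_I^{p-1}}$, which is what will be exploited in Section~\ref{subsec: Hess Frob} to produce the compatible splitting of the Hessenberg patch ideals $I_{w_0,h}$, and in showing (in a subsequent result) that the family $\{I_{w_0,h}\}$ is simultaneously compatibly split.
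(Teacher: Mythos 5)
Your proposal is correct and follows exactly the paper's argument: verify that $\init_<(F_I)=\prod_{i=1}^N x_i$ via multiplicativity of the initial term and conditions (1)--(2) of Definition~\ref{definition: lci triangular}, then invoke Lemma~\ref{trace_degeneration} with $g=F_I$. You spell out the computation of $\init_<(F_I)$ and the $n$ vs.\ $N$ bookkeeping that the paper dismisses as ``not difficult to see,'' but the route is the same.
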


\begin{proof} 
We have just seen that $\init_<(F_I)$ is the product of all the indeterminates appearing in $\mathbb{K}[x_1,\cdots,x_N]$. Setting $g = F_I$ and $f := g^{p-1}=F_I^{p-1}$ in Lemma~\ref{trace_degeneration} yields the claim. 
\end{proof}

From Theorem~\ref{theorem: FI splitting for triangular}, it now follows that for any nonempty subset $L\subseteq [n]$, the ideal $I_L := \langle f_i|i\in L\rangle$ is compatibly split with respect to $\varphi_{F_I^{p-1}}$. We have the following.

\begin{corollary}\label{cor: Hessenberg_split}
Let $\mathbb{K}$ a field of positive characteristic $p$. Let $I$ be a triangular complete intersection in $\mathbb{K}[x_1,\ldots,x_N]$ with respect to a monomial order $<$ and assume $\init_{<}(f_j)=u_j x_j$ for $1 \leq j \leq n \leq N$.  Let $F_I$ be the polynomial defined by~\eqref{eq: def FI}. Let $L$ be a non-empty subset of $[n]$. Then $I_L := \langle f_j \vert  j \in L \rangle$ is a compatibly split ideal with respect to $\varphi_{F_I^{p-1}} = \tr(F_I^{p-1}\boldsymbol{\cdot})$. 
\end{corollary}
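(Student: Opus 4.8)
The plan is to deduce this corollary almost immediately from Theorem~\ref{theorem: FI splitting for triangular} together with the standard fact that a polynomial $f$ is a \emph{compatible splitting witness} for an ideal $J$ exactly when $f \in J^{[p]} : J$ in an appropriate sense; but in fact the cleanest route avoids even that and works directly with the trace map. First I would recall the elementary property of compatible splitting: if $\varphi$ is a Frobenius splitting of $R$ and $J$ is an ideal, then $J$ is compatibly split by $\varphi$ as soon as $\varphi(J) \subseteq J$, and it suffices to check this on a generating set because $\varphi$ is additive and satisfies $\varphi(a^p b) = a\varphi(b)$, so $\varphi(rg) = \varphi(r g)$ can be expanded using the $p$-ary digit expansion of $r$ and reduced to the values $\varphi(m g)$ for monomials $m$. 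Hence the whole problem reduces to showing $\tr(F_I^{p-1} \cdot m f_j) \in I_L$ for each $j \in L$ and each monomial $m$.

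Next I would unwind the definition of $F_I$. Write $F_I = \big(\prod_{i>n} x_i\big)\big(\prod_{k \le n} u_k^{-1} f_k\big)$, so that $F_I^{p-1} f_j = \big(\prod_{i>n} x_i\big)^{p-1} \big(\prod_{k \le n} u_k^{-1} f_k\big)^{p-1} f_j$. The key observation is that $\big(\prod_{k \le n} u_k^{-1} f_k\big)^{p-1} f_j = \big(\prod_{k \ne j} u_k^{-(p-1)} f_k^{p-1}\big)\cdot u_j^{-(p-1)} f_j^{p}$, and the factor $f_j^p$ is a $p$-th power that behaves multiplicatively well under $\tr$: concretely, for any $h \in R$ one has $\tr\!\big(f_j^p \, h\big) = f_j \,\tr(h)$, which is an instance of the near-splitting property $\tr(a^p h) = a\,\tr(h)$ recorded just before Lemma~\ref{trace_degeneration} (inherited from the defining identity for the trace map in \cite[Section 1.3.1]{Brion-Kumar}). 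Therefore $\varphi_{F_I^{p-1}}(m f_j) = \tr\!\big(F_I^{p-1} m f_j\big) = f_j \cdot \tr\!\big(u_j^{-(p-1)} \big(\prod_{i>n} x_i\big)^{p-1}\big(\prod_{k \ne j} u_k^{-(p-1)} f_k^{p-1}\big) m\big) \in (f_j) \subseteq I_L$, since $j \in L$. Because this holds for every monomial $m$ and every generator $f_j$ with $j \in L$, and since $\varphi_{F_I^{p-1}}$ is additive and $F_I^{p-1}$-twisted, we conclude $\varphi_{F_I^{p-1}}(I_L) \subseteq I_L$, i.e.\ $I_L$ is compatibly split.

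The one point requiring a little care — and the step I expect to be the main obstacle — is the identity $\tr(a^p h) = a\,\tr(h)$ applied with $a = f_j$ a non-monomial polynomial: the trace map is defined on monomials and extended linearly, so one must verify this compatibility genuinely holds for polynomial $a$, not just monomials. This follows by writing $a = \sum m_\alpha$, $h = \sum n_\beta$ as sums of monomials, expanding $a^p h = \sum_\alpha \sum_\beta m_\alpha^p n_\beta$ (using the characteristic-$p$ freshman's dream $a^p = \sum m_\alpha^p$), and checking monomial-by-monomial that $\tr(m_\alpha^p n_\beta) = m_\alpha \tr(n_\beta)$ directly from the defining case formula for $\tr$ — a routine check since $m_\alpha^p n_\beta \prod x_i$ is a $p$-th power iff $n_\beta \prod x_i$ is. This is exactly the ``near splitting'' property already invoked in the proof of Lemma~\ref{trace_degeneration}, so for the write-up it suffices to cite that property rather than re-derive it. I would then remark, as an addendum, that the poset structure claimed in the statement of the overarching theorem is trivial: $L \subseteq L'$ gives $I_L \subseteq I_{L'}$, and all the $I_L$ are simultaneously compatibly split by the \emph{single} splitting $\varphi_{F_I^{p-1}}$, which is what furnishes the partially ordered set of compatibly split ideals referred to in the introduction.
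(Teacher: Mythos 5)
Your argument is correct, and it takes a genuinely different and more direct route than the paper. The paper first shows that the principal ideal $\langle F_I\rangle$ is compatibly split (via the same near-splitting computation $\tr(rF_I^p)=F_I\tr(r)$ that you use), and then invokes the structural machinery of \cite[Proposition 1.2.1]{Brion-Kumar} three separate times: minimal primes of a compatibly split ideal are compatibly split, intersections of compatibly split ideals are compatibly split (to get that each $\langle f_i\rangle$ is compatibly split), and finally sums of compatibly split ideals are compatibly split (to get $I_L$). Your approach avoids all of that by exploiting the explicit factorization $F_I^{p-1}f_j = f_j^p\cdot G_j$ for $j\le n$, where $G_j$ is the obvious complementary product, and then applying the near-splitting identity $\tr(a^p h)=a\tr(h)$ with $a=f_j$ to conclude $\varphi_{F_I^{p-1}}(rf_j)=f_j\cdot\tr(G_j r)\in(f_j)\subseteq I_L$ directly. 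This is more elementary and self-contained, at the mild cost of being a computation special to the situation at hand rather than an instance of general machinery. Two small remarks on the write-up: first, the reduction to monomials (and the ``$p$-ary digit expansion'' framing) is unnecessary --- once you have $\tr(f_j^p h)=f_j\tr(h)$ for arbitrary $h\in R$, you can simply take $h=G_j r$ with $r\in R$ arbitrary; no decomposition of $r$ is needed. Second, in your sketch of the verification that $\tr(a^p h)=a\tr(h)$ for polynomial $a$, you should be careful to keep the coefficients: writing $a=\sum c_\alpha m_\alpha$ gives $a^p=\sum c_\alpha^p m_\alpha^p$, and the identity then hinges on $\tr$ being $p^{-1}$-linear (so $\tr(c^p g)=c\,\tr(g)$) rather than $\mathbb{K}$-linear --- but, as you say, this is exactly what it means for $\tr$ to be a near-splitting, so citing that property is the cleanest route and the re-derivation can be omitted.
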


\begin{proof}
We use the properties of Frobenius splittings as shown in \cite[Section 1.2]{Brion-Kumar} (or \cite[Section 1]{Knut} for the same statements except without the assumption that $\mathbb{K}$ is algebraically closed). First we claim that $\langle F_I \rangle$ is compatibly split with respect to $\varphi_{F_I^{p-1}}$, which (by definition of compatibly split ideals) is equivalent to $\varphi_{F_I^{p-1}}(\langle F_I \rangle) \subseteq \langle F_I \rangle$. This can be seen by noticing that if $rF_I\in \langle F_I \rangle$, then  $\varphi_{F_I^{p-1}}(rF_I) := \tr(rF_I^p)=F_I\cdot\tr(r)\in \langle F_I\rangle$ where the last equality uses the fact that $\tr(\cdot)$ is a near-splitting (see Definition \ref{definition: Frobenius}).
Next we claim that each principal ideal $\langle f_i \rangle$ is compatibly split with respect to $\varphi_{F_I^{p-1}}$. This is because prime components of a compatibly split ideal are also compatibly split \cite[Proposition 1.2.1]{Brion-Kumar}.  Since $f_i$ is a factor of $F_n$, so is any irreducible factor of $f_i$, and the principal (prime) ideal generated by such an irreducible factor is compatibly split. Thus the intersection of these prime ideals, which is $\langle f_i\rangle$, is also compatibly split, because it is also known that intersections of compatibly split ideals are compatibly split  \cite[Proposition 1.2.1]{Brion-Kumar}.  Finally, it is shown in \cite[Proposition 1.2.1]{Brion-Kumar} that any sum of compatibly split ideals is compatibly split. Since each $I_L$ is obtained by the taking a sum of the principal ideals $\langle f_i \rangle$, the result follows.
\end{proof}

\begin{remark}
It is worth noting that the ideals $I_L$ are examples of Knutson ideals, which are certain collections of ideals that are closed under addition, intersection, and ideal quotients (see \cite[Definition 1]{Sec}). In specific cases, the poset of compatibly split ideals of $\varphi_f$ can be explicitly described using these ideal operations (as in \cite[Theorems 2 and 6]{Knut}). Additionally, it is possible to concatenate Gr\"obner bases of Knutson ideals when $\init_<(f) = \prod_{i=1}^N x_i$ (see \cite[Theorem 1.1]{Sec}).
\end{remark}

\subsection{From positive characteristic to characteristic zero}
\label{subsec: Hess Frob}

As mentioned above, although Definition~\ref{definition: Frobenius} is given in the positive characteristic setting, Frobenius splittings can also provide information about schemes defined over characteristic $0$ (cf. \cite[Section 1.6]{Brion-Kumar} for more details). Our goal in this section is to apply the constructions in Section~\ref{subsec: Frob of triangular lci} to obtain results for Hessenberg varieties. More specifically, in Corollary~\ref{cor: Hessenberg_split} and Corollary~\ref{cor: simultaneous split}, we will give a (local) positive answer to Question 5.1 in the $w_0$-chart. 

We begin by illustrating some of how this works in the study of reducedness. In particular, we recover - using Frobenius techniques - a result from \cite{ADGH} that $\mathrm{Hess}(\mathsf{N},h)$ is reduced in Proposition~\ref{Frobenius_Hess_reduced}. While this result is not new, it demonstrates the utility of these techniques, and allows us to establish some terminology and notation. 

Let $X$ be a separated scheme of finite type over $\spec(\mathbb{Z})$, and let $X_{\mathbb{Q}}$ and $X_{\mathbb{F}_p}$ denote the fibers over $\langle 0\rangle$ and $\langle p\rangle$ respectively. Further, we denote the base change of each to the algebraic closure by $X_{\overline{\mathbb{Q}}}$ and $X_{\overline{\mathbb{F}_p}}$ respectively. Then by \cite[Proposition 1.6.5]{Brion-Kumar}, if $X_{\overline{\mathbb{F}_p}}$ is reduced for all sufficiently large primes $p$, then $X_{\overline{\mathbb{Q}}}$ is also reduced. Since Frobenius split schemes are reduced by \cite[Proposition 1.2.1]{Brion-Kumar}), this hypothesis is implied when $X_{\overline{\mathbb{F}_p}}$ admits a Frobenius splitting for sufficiently large primes $p$.  Finally, recall that if a scheme is reduced over a perfect field -- such as a field of characteristic $0$ -- then it is also reduced over any extension of that field \cite[Section II Exercise 3.15(b)]{Hartshorne}. Thus if $X_{\overline{\mathbb{Q}}}$ is reduced, then so is $X_{\mathbb{C}}$. In other words, the above discussion shows that there is a criterion, phrased in terms of existence of Frobenius splittings in positive characteristic, for a scheme over $\mathbb{C}$ to be reduced. 

In the setting of Hessenberg varieties, since the $f^{w_0}_{k,\ell}$ have integer coefficients, we may consider the scheme over  $\spec(\mathbb{Z})$ defined as $X:=\spec(\mathbb{Z}[\mathbf{x}_{w_0}]/I_{w_0,h}')$ where $I_{w_0,h}'$ is generated by the same $f^{w_0}_{k,\ell}$ as in the previous sections, except that we view them as elements in $\mathbb{Z}[\mathbf{x}_{w_0}]$. By the discussion above, we could prove that  $X_{\mathbb{C}}=\spec(\mathbb{C}[\mathbf{x}_{w_0}]/I_{w_0,h})$ is reduced by showing that there exist appropriate Frobenius splittings for sufficiently large primes. (As mentioned briefly above, there are similar results for other geometric properties of schemes over $\mathbb{C}$, e.g. being Cohen-Macaulay \cite[Proposition 1.6.4]{Brion-Kumar}.) These considerations lead to the following.

\begin{proposition}\label{Frobenius_Hess_reduced}
The Hessenberg local patches $\hess(\mathsf{N},h) \cap \mathcal{N}_{w_0}$ are reduced, and hence $\hess(\mathsf{N},h)$ is reduced. 
\end{proposition}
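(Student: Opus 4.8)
The plan is to apply the machinery of Section~\ref{subsec: Frob of triangular lci} directly to the Hessenberg patch ideal in positive characteristic, using the fact established in Section~\ref{sec: fij} that $I_{w_0,h}$ is a triangular complete intersection with respect to the monomial order $<_n$. First I would fix a prime $p$ and work with the ideal $I'_{w_0,h,p} := \langle f^{w_0}_{k,\ell} \mid k > h(\ell) \rangle$ inside $\mathbb{F}_p[\mathbf{x}_{w_0}]$, which is again a triangular complete intersection by Lemma~\ref{lemma: initial term and one variable} and Remark~\ref{remark: is lci} (those statements are characteristic-free, since the initial term computation only uses that $f^{w_0}_{\ell+1,\ell}=1$ and the combinatorial structure of the recursion in Proposition~\ref{prop: fij inductive}). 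Then Theorem~\ref{theorem: FI splitting for triangular} produces an explicit polynomial $F := F_{I_{w_0,h}}$, namely the product of the ``extra'' indeterminates (those $x_{i,j}$ which are not the leading variable of any generator) times the product of the generators $f^{w_0}_{k,\ell}$ with $k>h(\ell)$ (up to the unit constants $u_j = -1$ coming from Lemma~\ref{lemma: initial term and one variable}(1)), and $\varphi_{F^{p-1}} = \tr(F^{p-1}\,\boldsymbol{\cdot})$ is a Frobenius splitting of $\mathbb{F}_p[\mathbf{x}_{w_0}]$. By Corollary~\ref{cor: Hessenberg_split}, each $\langle f^{w_0}_{k,\ell}\rangle$ is compatibly split, and hence $I'_{w_0,h,p}$ itself is compatibly split, as a sum of such principal ideals.

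The second step is to transfer this from positive characteristic to characteristic zero. Here I would invoke exactly the argument sketched in the paragraph preceding the statement: since the $f^{w_0}_{k,\ell}$ have integer coefficients, we may form $X := \spec(\mathbb{Z}[\mathbf{x}_{w_0}]/I'_{w_0,h})$, and for \emph{every} prime $p$ (not merely sufficiently large ones) the fiber $X_{\mathbb{F}_p}$ carries the Frobenius splitting $\varphi_{F^{p-1}}$, hence is reduced by \cite[Proposition 1.2.1]{Brion-Kumar}; base-changing to $\overline{\mathbb{F}_p}$ preserves the splitting and hence reducedness. Then \cite[Proposition 1.6.5]{Brion-Kumar} gives that $X_{\overline{\mathbb{Q}}}$ is reduced, and since reducedness over a perfect field is preserved under field extension \cite[Section II Exercise 3.15(b)]{Hartshorne}, $X_{\mathbb{C}} = \spec(\mathbb{C}[\mathbf{x}_{w_0}]/I_{w_0,h})$ is reduced. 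This is precisely the statement that $\hess(\mathsf{N},h) \cap \mathcal{N}_{w_0}$ is reduced.

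For the last clause --- that $\hess(\mathsf{N},h)$ itself is reduced --- I would combine the local statement just obtained with Remark~\ref{lci}: since $\hess(\mathsf{N},h)$ is a local complete intersection (hence in particular Cohen--Macaulay, so has no embedded components), it suffices to check generic reducedness, and the $w_0$-chart $\mathcal{N}_{w_0}$ is open and dense in $\hess(\mathsf{N},h)$. Reducedness on a dense open subscheme of a scheme with no embedded points propagates to the whole scheme, so we conclude. Alternatively one could remark that the same Frobenius-splitting construction applies verbatim in every $w$-chart (each $I_{w,h}$ being a triangular complete intersection by the analogous argument), giving a splitting and hence reducedness on every chart of an open cover; but invoking the local complete intersection property via Remark~\ref{lci} is cleaner.

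The main obstacle I anticipate is a bookkeeping one rather than a conceptual one: one must be careful that the polynomial $F$ constructed from $I_{w_0,h}$ really does have $\init_{<_n}(F) = \prod_{i,j} x_{i,j}$, the product of \emph{all} indeterminates of $\mathbb{C}[\mathbf{x}_{w_0}]$. This requires knowing that the leading variables $\{x_{n+1-k,\ell+1} \mid k > h(\ell)\}$ of the chosen generators are \emph{distinct} from one another and from the ``extra'' variables, and that together they exhaust $\mathbf{x}_{w_0}$ --- which follows from Lemma~\ref{lemma: initial term and one variable}(1),(2) (distinctness of leading terms is already needed for the triangular complete intersection property) and the definition~\eqref{eq: def FI} of $F_I$, which puts in exactly the missing indeterminates by hand. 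A secondary subtlety is that we want the splitting to exist for \emph{all} $p$, not just large $p$, so that no primes need to be excluded; but this is automatic from Theorem~\ref{theorem: FI splitting for triangular}, which works over any field of positive characteristic. Once these points are checked, the proof is a direct concatenation of Theorem~\ref{theorem: FI splitting for triangular}, Corollary~\ref{cor: Hessenberg_split}, and the standard reduction-to-characteristic-zero package cited above.
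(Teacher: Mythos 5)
Your proof is correct and takes a genuinely different route from the paper's, and arguably a cleaner one. The paper's proof of this proposition does \emph{not} invoke the explicit splitting from Section~\ref{subsec: Frob of triangular lci}: instead it establishes that $(\hess(\mathsf{N},h) \cap \mathcal{N}_{w_0})_{\overline{\mathbb{F}_p}}$ is \emph{smooth} for $p\gg 0$ (by arguing that the Gr\"obner basis persists modulo large primes and then using openness of smoothness in the flat Gr\"obner degeneration family), and then appeals to the general fact that smooth affine varieties over algebraically closed fields of positive characteristic admit some Frobenius splitting (\cite[Proposition 1.1.6]{Brion-Kumar}). You instead apply Theorem~\ref{theorem: FI splitting for triangular} and Corollary~\ref{cor: Hessenberg_split} directly to the mod-$p$ Hessenberg patch ideal, which is legitimate because the triangular complete intersection structure is characteristic-free (the $f_{k,\ell}^{w_0}$ are integer polynomials with unit leading coefficients $\pm 1$, and the relative primality of the leading indeterminates --- hence Buchberger's first criterion --- is a combinatorial statement independent of the field). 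What this buys you is an \emph{explicit} splitting valid for \emph{every} prime $p$, not merely $p \gg 0$, and it dispenses with the detour through smoothness and Gr\"obner degenerations; what the paper's route buys is independence from the explicit construction (it would work for any local complete intersection that can be shown smooth mod $p$). Both proofs conclude with the same reduction-to-characteristic-zero package from \cite[\S 1.6]{Brion-Kumar}, and your handling of the final clause via Remark~\ref{lci} makes explicit an implication the paper leaves tacit. Your proposal is sound as written.
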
 

\begin{proof} 
It is known that if $Y$ is a \emph{smooth} affine algebraic variety defined over an algebraically closed field of characteristic $p>0$, then $Y$ can be Frobenius split, i.e., there exists a Frobenius splitting $\varphi$ of its affine coordinate ring $A(Y)$ \cite[Proposition 1.1.6]{Brion-Kumar}. 
We next claim that $(\hess(\mathsf{N},h) \cap \mathcal{N}_{w_0})_{\overline{\mathbb{F}_p}}$ is smooth for sufficiently large $p$. Indeed, since the $f_{k,\ell}^{w_0}$ have integer coefficients, the general 
$S$-polynomial reduction will involve at worst rational coefficients. Thus, after clearing denominators and for $p >>0$, no terms in any of the polynomials $f_{k,\ell}^{w_0}$ nor any $S$-polynomials appearing in the Buchberger algorithm would vanish modulo $p$.  In other words, for $p>>0$ the  $\{f_{k,\ell}^{w_0}\}$ (for appropriate $k,\ell$) would remain a Gr\"obner basis of $I_{w_0,h,p}$ with respect to $<_n$. Hence for $p$ sufficiently large, $\init_{<_n}(I_{w_0,h,p})$ is an ideal of indeterminates, and hence the corresponding variety is smooth. Now consider the flat family given by the Gr\"obner degeneration of $I_{w_0,h,p}$ to $\init_{<_n}(I_{w_0,h,p})$. Since being smooth is an open condition in flat families,  if $R/\init_{<_n}(I_{w_0,h,p})$ is regular (i.e. the corresponding variety is smooth), then $R/I_{w_0,h,p}$ is regular too, i.e., $(\hess(\mathsf{N},h) \cap \mathcal{N}_{w_0})_{\overline{\mathbb{F}_p}}$ is smooth for sufficiently large $p$, as desired. From this it follows that for any $p>>0$, there exists a Frobenius splitting of the scheme $(\hess(\mathsf{N},h) \cap \mathcal{N}_{w_0})_{\overline{\mathbb{F}_p}}$, i.e.,  
there exists a Frobenius splitting map $\varphi_p: \overline{\mathbb{F}_p}[\mathbf{x}_{w_0}]/I_{w_0,h,p}
\to \overline{\mathbb{F}_p}[\mathbf{x}_{w_0}]/I_{w_0,h,p}$, where $I_{w_0,h,p}$ is the ideal defined by the same polynomials $f^{w_0}_{k,\ell}$ as in Section~\ref{subsec: reg nilp Hess} but interpreted as elements of 
$\overline{\mathbb{F}_p}[\mathbf{x}_{w_0}]$. Now the argument given in the discussion before the statement of the Proposition yields the claim.  
\end{proof}

The reducedness of $\mathrm{Hess}(\mathsf{N},h)$ is not a new result, as mentioned in Remark \ref{lci}, since it was originally shown in \cite{ADGH}; the point of the above discussion is that it is possible to give an alternative proof using Frobenius splittings. 
A drawback of the considerations so far, however, is that the above considerations yields the \emph{existence} of Frobenius splittings, but we do not obtain concrete information (e.g. a formula) for it. In the setting of local Hessenberg patches, we can remedy this situation using the constructions given in Section~\ref{subsec: Frob of triangular lci}.


Theorem~\ref{theorem: FI splitting for triangular} suggests that one method for explicitly constructing Frobenius splittings is to search for polynomials whose initial term is the product of all the indeterminates of the relevant polynomial ring. This is precisely the strategy that we follow for our Hessenberg patch ideals in $\mathbb{C}[\mathbf{x}_{w_0}]$, which is the general construction from the previous section used in Equation \ref{eq: def FI}. It will be convenient to begin the discussion with the ``largest'' local Hessenberg patch ideal at $w_0$ (in the sense that it contains the largest number of generators $f^{w_0}_{k, \ell}$). This case corresponds to the so-called Peterson Hessenberg function. Specifically, let $n$ be a fixed positive integer with $n \geq 3$.  The Peterson Hessenberg function is defined by $\bar{h} :=(2,3,\ldots,n,n)$, i.e.,  $\bar{h}(i)=i+1$ for $1 \leq i < n$ and $\bar{h}(n)=n$. We consider the ($\overline{\mathbb{F}_p}$-version of the) $w_0$-patch of the Peterson variety, namely $(\hess(\mathsf{N},h) \cap \mathcal{N}_{w_0})_{\overline{\mathbb{F}_p}}$ defined by the ideal $I_{w_0,\bar{h},p} \subset \overline{\mathbb{F}_p}[\mathbf{x}_{w_0}]$ where $p$ is any prime $p>0$. There are $(n-1)(n-2)/2$ many generators $f^{w_0}_{k,\ell}$ of $I_{w_0,\bar{h},p}$, where the indices must satisfy $k > \bar{h}(\ell)=\ell+1$.  By Lemma~\ref{lemma: initial term and one variable}, we know $\init_{<_n}(f^{w_0}_{k,\ell}) = -x_{n+1-k,\ell+1}$. 
Note that the indeterminate $x_{i,1}$ does not appear as the initial term of any $f^{w_0}_{k,\ell}$ for $1\leq i \leq n-1$. With these observations in mind we define the polynomial 

\begin{equation}\label{eq: def Fn}
F_n := (-1)^{(n-1)(n-2)/2}\Big{(}\prod_{1\leq i\leq n-1}x_{i,1}\Big{)}\Big{(}\prod_{k>\bar{h}(\ell)}f_{k,\ell}^{w_0}\Big{)} \in \overline{\mathbb{F}_p}[\mathbf{x}_{w_0}].
\end{equation}

It is not difficult to see that, more or less by construction, $\init_{<_n}(F_n) = \prod\limits_{1\leq j\leq n-i} x_{i,j}$, i.e., the initial term is the product of all the indeterminates in $\overline{\mathbb{F}_p}[\mathbf{x}_{w_0}]$. It follows immediately from Lemma~\ref{trace_degeneration} that we can construct an explicit Frobenius splitting as follows.

\begin{theorem}\label{theorem: Fn splitting for Hess}
Let $p$ be any prime, $p>0$. Let $F_n$ be the function defined by~\eqref{eq: def Fn}. Then $\varphi_{F_n^{p-1}} := \tr(F_n^{p-1}\boldsymbol{\cdot})$
is a Frobenius splitting of $\overline{\mathbb{F}_p}[\mathbf{x}_{w_0}]$. 
\end{theorem}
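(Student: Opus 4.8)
The plan is to reduce Theorem~\ref{theorem: Fn splitting for Hess} to Lemma~\ref{trace_degeneration} by exhibiting $F_n$ as a polynomial whose initial term (with respect to the lexicographic order $<_n$) is the product of \emph{all} the indeterminates in $\overline{\mathbb{F}_p}[\mathbf{x}_{w_0}]$. Once that is verified, the conclusion is immediate: set $g := F_n$ and $f := g^{p-1} = F_n^{p-1}$ in Lemma~\ref{trace_degeneration}, which then gives that $\varphi_{F_n^{p-1}} = \tr(F_n^{p-1}\,\boldsymbol{\cdot})$ is a Frobenius splitting of $\overline{\mathbb{F}_p}[\mathbf{x}_{w_0}]$. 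So the real content is the computation of $\init_{<_n}(F_n)$.

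First I would recall the setup: the indeterminates of $\overline{\mathbb{F}_p}[\mathbf{x}_{w_0}]$ are the $x_{i,j}$ with $1 \leq j \leq n-i$ (equivalently $i+j \leq n$), and their product is $\prod_{i+j \leq n} x_{i,j}$. The polynomial $F_n$ in~\eqref{eq: def Fn} is, up to the sign $(-1)^{(n-1)(n-2)/2}$, the product of the "boundary column" variables $\prod_{1 \leq i \leq n-1} x_{i,1}$ with the product of the Peterson generators $\prod_{k > \bar{h}(\ell)} f^{w_0}_{k,\ell}$. By Lemma~\ref{lemma: initial term and one variable}(1), for each pair $(k,\ell)$ with $k > \bar{h}(\ell) = \ell+1$ we have $\init_{<_n}(f^{w_0}_{k,\ell}) = -x_{n+1-k,\ell+1}$. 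Since the initial term of a product of polynomials is the product of their initial terms, I would compute
\[
\init_{<_n}\!\Big(\prod_{k>\bar{h}(\ell)}f^{w_0}_{k,\ell}\Big) = \prod_{k>\ell+1}\big(-x_{n+1-k,\ell+1}\big) = (-1)^{(n-1)(n-2)/2}\prod_{k>\ell+1}x_{n+1-k,\ell+1},
\]
where the exponent on $-1$ is just the number of generators, $(n-1)(n-2)/2$. The key combinatorial step is then to check that, as $(k,\ell)$ ranges over all pairs with $1 \leq \ell \leq n-1$ and $\ell+2 \leq k \leq n$, the pairs $(n+1-k,\,\ell+1)$ range exactly once over all $(i,j)$ with $i+j \leq n$ and $j \geq 2$ — i.e. over all indeterminates \emph{except} those in the first column $x_{i,1}$. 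This is a bijection argument: given $(i,j)$ with $2 \leq j$ and $i+j \leq n$, set $\ell = j-1$ and $k = n+1-i$; then $\ell+2 = j+1 \leq n+1-i$ forces the constraint, and one checks the correspondence is inverse to $(k,\ell) \mapsto (n+1-k,\ell+1)$. Multiplying by the extra factor $\prod_{1 \leq i \leq n-1} x_{i,1}$ (which supplies precisely the missing first-column variables $x_{i,1}$, $1 \leq i \leq n-1$) yields $\init_{<_n}(F_n) = \prod_{i+j \leq n} x_{i,j}$, and the signs $(-1)^{(n-1)(n-2)/2}$ cancel the one picked up above.

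I expect the main obstacle — modest, but the only place requiring care — to be bookkeeping the index ranges in the bijection: making sure no indeterminate is repeated (which would spoil the "product of all indeterminates" conclusion, since then $\init_{<_n}(F_n)$ would not be squarefree) and none is omitted, and that the first-column factor contributes exactly $x_{1,1},\dots,x_{n-1,1}$ with no overlap with the generators' initial terms (which indeed have second index $\ell+1 \geq 2$, never $1$). A secondary point worth a sentence is that $<_n$ is lexicographic, so Lemma~\ref{trace_degeneration} applies verbatim; and that the whole argument is characteristic-independent, so it holds for every prime $p > 0$ as claimed. Once $\init_{<_n}(F_n) = \prod x_{i,j}$ is established, Lemma~\ref{trace_degeneration} finishes the proof.
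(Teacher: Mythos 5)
Your proposal is correct and takes exactly the same route as the paper: reduce to Lemma~\ref{trace_degeneration} by verifying $\init_{<_n}(F_n) = \prod_{i+j\leq n} x_{i,j}$. The only difference is that the paper dismisses the initial-term computation as ``more or less by construction,'' whereas you carefully spell out the index bijection $(k,\ell) \mapsto (n+1-k,\ell+1)$ and the sign bookkeeping — a welcome elaboration, but not a different argument.
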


In fact, by the same arguments in Section~\ref{sec: fij}, the ideals $I_{w_0,h,p}$ are also triangular complete intersections in $\overline{\mathbb{F}_p}[\mathbf{x}_{w_0}]$. From Theorem~\ref{theorem: FI splitting for triangular}, it now readily follows that all Hessenberg patch ideals, for different choices of Hessenberg function $h$, are compatibly split with respect to $\varphi_{F_n^{p-1}}$. We have the following.

\begin{corollary}\label{cor: Hessenberg_split2}
Let $h$ be an indecomposable Hessenberg function for a fixed $n$. Then  the Hessenberg patch ideal $I_{w_0,h,p}$ is a compatibly split ideal with respect to $\varphi_{F_n^{p-1}} = \tr(F_n^{p-1}\boldsymbol{\cdot})$.
\end{corollary}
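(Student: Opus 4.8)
The plan is to obtain this as an immediate consequence of Corollary~\ref{cor: Hessenberg_split} together with Theorem~\ref{theorem: Fn splitting for Hess}, once we recognize $F_n$ as the polynomial $F_I$ of~\eqref{eq: def FI} attached to a suitable triangular complete intersection, and recognize $I_{w_0,h,p}$ as one of the ideals $I_L$ appearing in Corollary~\ref{cor: Hessenberg_split}.

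First I would verify that the analysis of Section~\ref{sec: fij} applies over $\overline{\mathbb{F}_p}$ for every prime $p$. The polynomials $f^{w_0}_{k,\ell}$ are given by the same formula (Definition~\ref{def: fkl}) over any base ring, the recursion of Proposition~\ref{prop: fij inductive} is a polynomial identity over $\mathbb{Z}$, and the leading-term computation of Lemma~\ref{lemma: initial term and one variable} rests only on the identity $f^{w_0}_{\ell+1,\ell}=1$ (Lemma~\ref{lemma: f j+1 j}) together with the combinatorial shape of the remaining summands; none of this is affected by reduction modulo $p$, and the relevant leading coefficients remain $-1\in\overline{\mathbb{F}_p}^{*}$. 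Since $I_{w_0,h,p}$ is moreover homogeneous for the nonstandard grading, Remark~\ref{remark: complete intersection} still gives that $\mathrm{ht}(I_{w_0,h,p})$ equals the number of its generators. Hence, taking $\bar{h}=(2,3,\ldots,n,n)$ the Peterson function, the ideal $I:=I_{w_0,\bar{h},p}\subseteq\overline{\mathbb{F}_p}[\mathbf{x}_{w_0}]$, with ordered list of generators $\{f^{w_0}_{k,\ell}: k>\ell+1\}$ read in the order~\eqref{eq: fij sequence}, is a triangular complete intersection with respect to $<_n$, and $\init_{<_n}(f^{w_0}_{k,\ell})=-x_{n+1-k,\ell+1}$.

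Next I would identify $F_n$ with $F_I$. Each generator of $I$ has leading term with second index $\ell+1\geq 2$, so the variables of $\overline{\mathbb{F}_p}[\mathbf{x}_{w_0}]$ that never occur as a leading term are exactly the $x_{i,1}$ for $1\leq i\leq n-1$; these supply the ``$\prod_{i>n}x_i$'' factor in~\eqref{eq: def FI}. Every leading coefficient equals $-1$ and there are $(n-1)(n-2)/2$ generators, so $\prod u_{k,\ell}^{-1}=(-1)^{(n-1)(n-2)/2}$; comparing with~\eqref{eq: def Fn} this shows $F_n=F_I$, so that $\varphi_{F_n^{p-1}}$ is precisely the splitting $\varphi_{F_I^{p-1}}$ of Theorem~\ref{theorem: FI splitting for triangular}. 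Finally, since $h$ is indecomposable we have $h(\ell)\geq\bar{h}(\ell)$ for all $\ell$, so $\{(k,\ell):k>h(\ell)\}\subseteq\{(k,\ell):k>\bar{h}(\ell)\}$ and $I_{w_0,h,p}$ is exactly the ideal $I_L$ generated by the sublist of~\eqref{eq: fij sequence} indexed by $\{k>h(\ell)\}$. Corollary~\ref{cor: Hessenberg_split} then gives that $I_L=I_{w_0,h,p}$ is compatibly split with respect to $\varphi_{F_I^{p-1}}=\varphi_{F_n^{p-1}}$, which is the claim.

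The main obstacle I anticipate is not conceptual but bookkeeping: one must ensure that the lemmas of Sections~\ref{subsec: reg nilp Hess} and~\ref{sec: fij}, several of which are stated over $\mathbb{C}$, remain valid over $\overline{\mathbb{F}_p}$ for \emph{all} primes $p$ (not merely for $p\gg 0$, as in the weaker argument of Proposition~\ref{Frobenius_Hess_reduced}). This holds because those statements reduce to polynomial identities over $\mathbb{Z}$ whose pertinent leading coefficients are the units $\pm 1$; but it is worth stating carefully, since the value added by this corollary over Proposition~\ref{Frobenius_Hess_reduced} is precisely that the splitting is \emph{explicit} and valid in \emph{every} positive characteristic.
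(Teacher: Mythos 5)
Your proof is correct and takes essentially the same approach as the paper: the paper's own proof simply observes that $I_{w_0,h,p}$ is an $I_L$ for a suitable $L$ and invokes Corollary~\ref{cor: Hessenberg_split}, with the verification that the triangular complete intersection structure persists over $\overline{\mathbb{F}_p}$ relegated to a single sentence preceding the corollary. You have usefully spelled out the details that the paper leaves implicit, in particular the identification $F_n=F_I$ and the observation that the leading coefficients $\pm 1$ remain units in \emph{every} characteristic.
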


\begin{proof}
In the notation surrounding the discussion of Corollary~\ref{cor: Hessenberg_split}, observe that each  $I_{w_0,h,p}$ is some $I_L$ for a suitable choice of $L$. An application of Corollary~\ref{cor: Hessenberg_split} to this choice of $L$ yields the result. 
\end{proof}

We have just seen that the Hessenberg patch ideals are compatibly split. In fact, we have just shown that there is a whole family of ideals, related to each other in a natural way, all of which are compatibly split by the same explicit Frobenius splitting $\varphi_{F_n^{p-1}}$ above. Indeed, the following is immediate from the arguments previously given. 

\begin{corollary}\label{cor: simultaneous split}
Let $p>0$ be a prime.  
There is a partially ordered set (ordered by inclusion) of ideals $\{I_{w_0,h,p}\}$, indexed by the set of (indecomposable) Hessenberg functions $h$, which are all compatibly split with respect to the Frobenius splitting $\varphi_{F_n^{p-1}}$.  
\end{corollary}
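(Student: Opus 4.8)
The plan is to assemble the corollary directly from what has already been established, since both of its assertions are now essentially immediate. First I would recall that by Theorem~\ref{theorem: Fn splitting for Hess} the map $\varphi_{F_n^{p-1}} = \tr(F_n^{p-1}\boldsymbol{\cdot})$ is a genuine Frobenius splitting of $\overline{\mathbb{F}_p}[\mathbf{x}_{w_0}]$, and that by Corollary~\ref{cor: Hessenberg_split2} each individual ideal $I_{w_0,h,p}$, for $h$ an indecomposable Hessenberg function for the fixed $n$, is compatibly split with respect to it. The one point worth spelling out is why Corollary~\ref{cor: Hessenberg_split} applies uniformly with the \emph{same} polynomial $F_n$: the polynomial $F_n$ in~\eqref{eq: def Fn} is built from the generators $f_{k,\ell}^{w_0}$ with $k > \bar h(\ell)$, where $\bar h = (2,3,\ldots,n,n)$ is the Peterson (i.e., minimal indecomposable) Hessenberg function, and for any indecomposable $h$ one has $h(\ell) \geq \bar h(\ell)$ for every $\ell$. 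Hence $\{(k,\ell) : k > h(\ell)\} \subseteq \{(k,\ell) : k > \bar h(\ell)\}$, every generator of $I_{w_0,h,p}$ divides $F_n$, and each $I_{w_0,h,p}$ is exactly the ideal $I_L$ of Corollary~\ref{cor: Hessenberg_split} for the subset $L = \{(k,\ell) : k > h(\ell)\}$; applying that corollary to each such $L$ yields compatible splitting for all of them simultaneously.

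Second, I would observe that the collection $\{I_{w_0,h,p}\}$ is partially ordered by inclusion simply because any family of subsets of a fixed set is. It is worth recording the combinatorial shadow of this order: if $h \leq h'$ pointwise, then the index set of generators shrinks, $\{(k,\ell) : k > h'(\ell)\} \subseteq \{(k,\ell) : k > h(\ell)\}$, so $I_{w_0,h',p} \subseteq I_{w_0,h,p}$; thus $h \mapsto I_{w_0,h,p}$ is order-reversing from the poset of indecomposable Hessenberg functions (under pointwise $\leq$) to the poset of ideals (under $\subseteq$), with the patch ideal $I_{w_0,\bar h,p}$ of the Peterson variety as the largest member. This is exactly the statement of the corollary.

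As for obstacles, there really is none of substance here: all the technical content --- that $F_n$ has initial term equal to the product of all the indeterminates of $\overline{\mathbb{F}_p}[\mathbf{x}_{w_0}]$, that a trace map twisted by such a polynomial is a Frobenius splitting, and that principal factors, intersections, and sums of compatibly split ideals remain compatibly split --- is already in Lemma~\ref{trace_degeneration}, Theorem~\ref{theorem: FI splitting for triangular}, and Corollary~\ref{cor: Hessenberg_split}. The only care needed is bookkeeping: confirming the index-set containment above so that each generator of each $I_{w_0,h,p}$ is a factor of $F_n$, and invoking Lemma~\ref{lemma: initial term and one variable} to see that the $f_{k,\ell}^{w_0}$ have pairwise distinct (hence coprime) initial terms, so that $\init_{<_n}(F_n)$ is squarefree of the form required by Theorem~\ref{theorem: FI splitting for triangular}. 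Given all this, the proof reduces to two sentences citing Corollary~\ref{cor: Hessenberg_split2} (or Corollary~\ref{cor: Hessenberg_split}) and the elementary remark that a set of ideals is a poset under inclusion.
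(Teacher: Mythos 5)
Your proposal is correct and follows essentially the same route as the paper: cite Theorem~\ref{theorem: Fn splitting for Hess} for the splitting, apply Corollary~\ref{cor: Hessenberg_split} (via Corollary~\ref{cor: Hessenberg_split2}) to each $I_{w_0,h,p}$ realized as an $I_L$ with $L$ the index set of generators satisfying $k > h(\ell)$, and note that inclusion makes the family a poset. The additional observations you record --- that indecomposability forces $h \geq \bar h$ pointwise so every generator of every $I_{w_0,h,p}$ divides $F_n$, and that the map $h \mapsto I_{w_0,h,p}$ is order-reversing --- are correct and make explicit what the paper leaves implicit.
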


Corollary~\ref{cor: Hessenberg_split2} and Corollary~\ref{cor: simultaneous split} answer Question~\ref{Frobenius_question} positively in the local patch near $w_0$. It is still an open question whether the same holds in other $w$-charts for $w \neq w_0$ and whether these Frobenius splittings arise as the restriction of a single Frobenius splitting of $\flag(\mathbb{C}^n)$ which simultaneously compatibly splits the regular nilpotent Hessenberg varieties $\hess(\mathsf{N},h)$. We leave this open for future work.

\section{Alternative proof of main results via Liaison}\label{sec: alternative}

In this section, we present an alternative proof of our main results concerning Hessenberg patch ideals (Corollary~\ref{cor: fkl are a GB} and Corollary~\ref{cor: Hess patch is GVD}) using liaison theory. Our main motivation for this section is that we expect these methods to be useful for an analysis of the $w$-charts of $\mathrm{Hess}(\mathsf{N},h)$ for $w \neq w_0$, because the local Hessenberg patch ideals for $w \neq w_0$ are not (necessarily) triangular complete intersections. Hence, the arguments given in the previous sections will not apply, and new ideas will be needed. Moreover, another subtlety arising in analyzing the general case is that the natural grading on the polynomial ring $\mathbb{C}[\mathbf{x}_w]$ for $w \neq w_0$ is not necessarily positive (see Remark~\ref{remark: not positive}). We expect that, in order to handle the general case, the perspectives and tools discussed in this section will be relevant. In particular, we note that in this section, we use liaison theory and an inductive argument (which allows for non-standard gradings) in order to first prove that the ideals are GVD, and from that we deduce that the $f^{w_0}_{k,\ell}$ generators form a Gr\"obner basis. This is in contrast to Section~\ref{sec: background GVD} and Section~\ref{sec: fij}, where we first show that the $f^{w_0}_{k,\ell}$ form a Gr\"obner basis, and then conclude GVD-ness.

 \subsection{Background on liaison theory}\label{subsec: liaison background} 
 
We begin with some background. First we quote a result of Klein and Rajchgot which gives a criterion for an ideal having a geometric vertex decomposition. We say that $I$ is \textbf{square-free} in a variable $y$ if there is a generating set $\mathcal{G}$ of $I$ such that $y^2$ does not divide any term of any element of $\mathcal{G}$. Note that, in the statement of the theorem below, there is no requirement for homogeneity. We comment on this further, below. 

\begin{theorem}\label{theorem: Klein Rajchgot}(\cite[Theorem 6.1]{KleRaj})
Let $I$, $C$, and $N \subseteq I \cap C$ be ideals of $R=\CC[x_1,\ldots,x_n]$, and let $<$ be a $y$-compatible term order for some $y\in\{x_1,\ldots,x_n\}$.  Suppose that $I$ is square-free in $y$ and that no term of any element of the reduced Gr\"obner basis of $N$ is divisible by $y$.  Suppose further that there exists an isomorphism $\varphi: C/N \xrightarrow{f/g} I/N$ of $R/N$-modules for some $ f, g \in R$ not zero-divisors in $R/N$, and $\text{in}_y(f)/g = y$.  Then $\text{in}_y I = C \cap ( N+\langle y \rangle )$ is a geometric vertex decomposition of $I$.  
\end{theorem}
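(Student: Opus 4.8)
\textbf{Plan for the proof of Theorem~\ref{theorem: Klein Rajchgot}.} The statement is quoted from \cite[Theorem 6.1]{KleRaj}, so in the paper itself this would simply be cited; here I sketch the approach one would take to prove it from scratch. The goal is to identify $\init_y(I)$ with the intersection $C \cap (N + \langle y \rangle)$, so the plan is to prove the two containments separately, using the module isomorphism $\varphi$ to control the ``$y$-free part'' of $I$ and the square-free hypothesis to control the ``$y$-divisible part''.

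First I would set up the bookkeeping that makes ``geometric vertex decomposition'' a statement one can check on generators. Write a $y$-compatible Gr\"obner basis of $I$ in the normal form $\mathcal{G} = \{ y^{d_i} q_i + r_i \}$ with $y \nmid q_i$ and $\init_y(y^{d_i}q_i + r_i) = y^{d_i}q_i$; because $I$ is square-free in $y$, each $d_i \in \{0,1\}$, so $\init_y(I) = \langle q_i : d_i = 1\rangle + \langle q_i : d_i = 0 \rangle$ with the first block carrying the generators of $C_{y,I}$ that genuinely see $y$ and the second block lying in $N_{y,I}$. The containment $\init_y(I) \subseteq C \cap (N + \langle y\rangle)$ is then the easy direction: each $q_i$ lies in $C_{y,I}$ by definition, and the square-free normal form shows $y^{d_i}q_i + r_i \equiv q_i^{\,} $ or $\equiv r_i$ modulo $\langle y \rangle$ in a way that places it in $N + \langle y\rangle$ once one checks $N_{y,I}$-membership of the $d_i = 0$ part; here one uses that no term of the reduced Gr\"obner basis of $N$ is divisible by $y$, so that $N$ is already in the ``$y$-free'' world and comparisons with it are clean.

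The substantive direction is $C \cap (N + \langle y \rangle) \subseteq \init_y(I)$, and this is where the isomorphism $\varphi \colon C/N \xrightarrow{\ f/g\ } I/N$ does the real work. The idea is: take $c \in C$ with $c = n_0 + y b$ for some $n_0 \in N$, $b \in R$; then $c \bmod N \in C/N$ maps under $\varphi$ to $(f/g)\, c \bmod N \in I/N$, i.e. there is $a \in I$ with $g a \equiv f c \pmod N$. Applying $\init_y$ and using the hypothesis $\init_y(f)/g = y$ (so $g \mid \init_y(f)$ and the quotient is exactly $y$), one extracts from $\init_y(fc)$ a factor of $y$ times $\init_y(c)$-type data, and because $g$ is a nonzerodivisor mod $N$ (hence its leading behavior is invertible in the relevant sense) one concludes that $\init_y(c)$, and then $c$ itself modulo the $N$-part, is forced into $\init_y(I)$. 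One then checks the two ideals have the same Hilbert function / are equal on the nose by combining the reverse containment with a dimension count, or more directly by verifying that the generators produced this way already generate $C \cap (N+\langle y\rangle)$. The main obstacle I anticipate is precisely this step: keeping the initial-$y$-form operation compatible with multiplication by the non-monomial elements $f$ and $g$, since $\init_y$ is not multiplicative in general, and one must use the very specific hypothesis $\init_y(f)/g = y$ together with $y$-compatibility of $<$ and the non-zero-divisor conditions to push monomial information through the module isomorphism. A secondary technical point is checking that $N \subseteq I \cap C$ together with the Gr\"obner hypotheses on $N$ guarantees $N = N_{y,I}$ (up to contraction), so that the abstractly-given $N$ really is the one appearing in the geometric vertex decomposition; this is a matter of comparing generating sets and using that $y$ divides no term of the reduced Gr\"obner basis of $N$.
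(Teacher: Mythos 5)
The paper does not prove this statement: it is quoted verbatim from Klein--Rajchgot and used as a black box, so there is no ``paper's proof'' to compare against. Evaluating your sketch as a stand-alone attempt, I see two concrete problems.

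First, in your ``easy'' containment you write that ``each $q_i$ lies in $C_{y,I}$ by definition.'' But the $C$ and $N$ in the theorem's hypothesis are \emph{abstract} ideals satisfying $N \subseteq I \cap C$ and related to $I$ by the module isomorphism $\varphi$ --- they are not assumed to be the ideals $C_{y,I}$ and $N_{y,I}$ built from a Gr\"obner basis of $I$. Showing that the abstract $C$ and $N$ actually coincide with $C_{y,I}$ and $N_{y,I}$ (or that the stated decomposition holds and is a GVD) is precisely the content of the theorem. By conflating the given $C$ with $C_{y,I}$ from the outset you render the ``easy'' direction circular; you also silently use that a square-free-in-$y$ ideal has a reduced Gr\"obner basis with all $y$-degrees $d_i \in \{0,1\}$, which is a separate lemma (it is \cite[Lemma 2.6]{KleRaj}) rather than immediate from the definition.

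Second, for the substantive containment you explicitly flag the obstacle you cannot get past: $\init_y$ is not multiplicative, so passing $\init_y$ through the relation $ga \equiv fc \pmod N$ does not straightforwardly yield information about $\init_y(c)$, and the non-zero-divisor hypotheses on $f,g$ in $R/N$ do not repair that on their own. You gesture at a ``Hilbert function / dimension count'' as a fallback, which is in fact the correct instinct --- the actual Klein--Rajchgot proof routes through the ``Gr\"obner bases via linkage'' lemma of Gorla--Migliore--Nagel (quoted in this paper as Lemma~\ref{non_standard_grading} in its graded form), where one compares graded pieces of $C/N$ against $I/N$ and of $\init_<(C)/\init_<(N)$ against a candidate monomial ideal inside $\init_<(I)$, then concludes equality from the matching Hilbert functions. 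But you do not carry this out: you neither set up the Hilbert function comparison nor explain where the hypothesis that no term of the reduced Gr\"obner basis of $N$ is divisible by $y$ enters (it is what makes the initial-ideal comparison on the $N$ side behave, since it keeps $\init_<(N)$ in the $y$-free subring). As written, the hard direction is not a proof but a statement of the difficulty.

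If you want to repair the sketch, abandon the direct ``apply $\init_y$ to both sides'' route and build the argument around the linkage lemma: produce the candidate Gr\"obner basis $\{yq_i + r_i\} \cup \{h_j\}$ of $I$ from Gr\"obner bases of $C$ and $N$ via the isomorphism $\varphi$, verify the two graded $\mathbb{K}$-vector-space isomorphisms that the lemma requires, and then read off both the equality $\init_y(I) = C \cap (N + \langle y \rangle)$ and the identification $C = C_{y,I}$, $N = N_{y,I}$ from the resulting Gr\"obner basis. That is the structure of the cited proof.
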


As mentioned above, in this section we first find geometric vertex decompositions, and then use them to find Gr\"obner bases.  The idea is that if we are in the setting of Theorem~\ref{theorem: Klein Rajchgot}, then we can obtain a Gr\"obner basis for $I$ by using Gr\"obner bases for $C$ and $N$. Lemma~\ref{lemma: GVD to GB} below makes this idea precise. We note that this result is essentially already contained in the proof of \cite[Corollary 4.13]{KleRaj}, but we chose to state it explicitly in this form for the following reason. In the statement of \cite[Corollary 4.13]{KleRaj}, Klein and Rajchgot give alternate criteria which guarantees the existence of an $R/N$-module isomorphism $\varphi: C/N \to I/N$ (which is a necessary hypothesis in Theorem~\ref{theorem: Klein Rajchgot} above). However, in our arguments below, we have other explicit methods to prove the existence of the necessary isomorphisms, so we restate the result in a form best suited for our purposes. 

Another preliminary remark is in order. The proof of \cite[Corollary 4.13]{KleRaj} (and hence Lemma~\ref{lemma: GVD to GB} below) relies on \cite[Lemma 4.12]{KleRaj}, which is itself a restatement of a result from liaison theory \cite[Lemma 1.12]{GorlaMN}. The original result in \cite{GorlaMN}, commonly called constructing a Gr\"obner basis via linkage, is phrased in terms of liaison-theoretic constructions. To avoid the technical liaison-theoretic setup, we instead opt for the graded isomorphism phrasing found in Lemma~\ref{non_standard_grading} below \cite{Neye}, which additionally allows for \emph{non-standard} gradings. This last point is important for us, since (as we have seen in Section~\ref{subsec: nonstandard}) our rings have non-standard gradings. 

Recall that a $\mathbb{Z}^d$-grading on a polynomial ring $R$ is said to be \emph{positive}, or equivalently we say that the polynomial ring $R$ is \emph{positively graded}, if the 
only elements in $R$ of degree $0$ are the constants.

\begin{lemma}\label{non_standard_grading}(\cite[Lemma 3.4]{Neye})
Let $R$ be a positively $\mathbb{Z}^d$-graded polynomial ring over an arbitrary field $\mathbb{K}$. Let $I,C$ and $N$ be homogeneous ideals with respect to the given $\mathbb{Z}^d$-grading, such that $N \subseteq I \cap C$. Let $A$ be a monomial ideal of $R$ such that $A \subseteq \init_{<}(I)$ and $\init_{<}(N) \subseteq A$ for some monomial order $<$. 
Suppose that there exists $e \in \mathbb{Z}^d$ such that $(I/N)_{\ell}\cong (C/N)_{\ell-e}$ and $(A/\init_{<}(N))_{\ell} \cong (\init_{<}(C)/\init_{<}(N))_{\ell-e}$ as $\mathbb{K}$-vector spaces for all $\ell \in \mathbb{Z}^d$. Then $A = \init_{<}(I)$. 
\end{lemma}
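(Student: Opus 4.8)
The plan is to exploit the standard fact that Gröbner degeneration is a flat degeneration which preserves Hilbert functions, combined with a dimension count forced by the short exact sequences in the hypotheses. First I would record that $\init_<(N) \subseteq A \subseteq \init_<(I)$, so in each graded piece we have the containments of $\mathbb{K}$-vector spaces $(\init_<(N))_\ell \subseteq A_\ell \subseteq (\init_<(I))_\ell$; hence it suffices to prove that $\dim_\mathbb{K} A_\ell = \dim_\mathbb{K} (\init_<(I))_\ell$ for every $\ell \in \mathbb{Z}^d$, since a vector subspace of finite dimension equal to that of the ambient space must coincide with it, and positivity of the grading guarantees each graded piece is finite-dimensional. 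Then equality in every degree gives $A = \init_<(I)$ as ideals.

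Next I would set up the two short exact sequences of $\mathbb{Z}^d$-graded $\mathbb{K}$-vector spaces: from $N \subseteq I$ we get, in degree $\ell$,
\begin{equation*}
0 \to N_\ell \to I_\ell \to (I/N)_\ell \to 0,
\end{equation*}
and analogously $0 \to N_{\ell-e} \to C_{\ell-e} \to (C/N)_{\ell-e} \to 0$; on the initial-ideal side, since $A$ and $\init_<(C)$ are monomial ideals containing the monomial ideal $\init_<(N)$, we likewise get $0 \to (\init_<(N))_\ell \to A_\ell \to (A/\init_<(N))_\ell \to 0$ and $0 \to (\init_<(N))_{\ell-e} \to (\init_<(C))_{\ell-e} \to (\init_<(C)/\init_<(N))_{\ell-e} \to 0$. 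The hypothesis supplies $(I/N)_\ell \cong (C/N)_{\ell-e}$ and $(A/\init_<(N))_\ell \cong (\init_<(C)/\init_<(N))_{\ell-e}$, so taking dimensions and using additivity along these sequences reduces the desired equality $\dim A_\ell = \dim (\init_<(I))_\ell$ to the chain of equalities
\begin{equation*}
\dim (\init_<(I))_\ell - \dim A_\ell = \dim (I/N)_\ell - \dim (A/\init_<(N))_\ell + \dim (\init_<(N))_\ell - \dim N_\ell,
\end{equation*}
which I would then rewrite, using the analogous identity for $C$ shifted by $e$, so that the $I$ versus $A$ discrepancy in degree $\ell$ equals the $C$ versus $\init_<(C)$ discrepancy in degree $\ell - e$.

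The crux is then the input from Gröbner theory: for any homogeneous ideal $J$ in a positively $\mathbb{Z}^d$-graded polynomial ring, $\dim_\mathbb{K} J_\ell = \dim_\mathbb{K} (\init_<(J))_\ell$ for all $\ell$, because the Gröbner degeneration is flat and the standard monomials in each degree form bases of both $(R/J)_\ell$ and $(R/\init_<(J))_\ell$. Applying this to $J = N$, $J = I$, and $J = C$ collapses several of the terms above: $\dim N_\ell = \dim (\init_<(N))_\ell$, $\dim I_\ell = \dim (\init_<(I))_\ell$, and $\dim C_{\ell-e} = \dim (\init_<(C))_{\ell-e}$. Feeding these into the bookkeeping of the previous paragraph forces $\dim (\init_<(I))_\ell - \dim A_\ell = 0$, which is exactly what we need. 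The main obstacle I anticipate is purely organizational rather than conceptual: carefully threading the degree shift $e$ through all four exact sequences and making sure the positivity hypothesis is genuinely used (it is needed both to ensure finite-dimensionality of graded pieces and to legitimately compare Hilbert functions degree-by-degree under the monomial-basis argument). Once the bookkeeping is done correctly, the conclusion $A = \init_<(I)$ is immediate.
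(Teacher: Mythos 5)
Your proof is correct. The paper does not prove this lemma itself---it is quoted directly from Neye \cite{Neye}, which in turn is a multigraded reformulation of Gorla--Migliore--Nagel \cite[Lemma~1.12]{GorlaMN}---and your argument is the standard one underlying that chain of results: reduce to a degree-by-degree dimension count (finite-dimensionality of graded pieces coming from positivity of the grading), use the four short exact sequences together with the two hypothesized graded isomorphisms, and then collapse the remaining terms by invoking that passing to an initial ideal preserves the multigraded Hilbert function of a homogeneous ideal. One minor bookkeeping slip: the displayed identity in your middle paragraph has $\dim(\init_<(I))_\ell$ on the left, but what comes directly out of the short exact sequences is $\dim I_\ell$; the swap already uses the Gr\"obner fact for $I$ that you only invoke in the following paragraph, and the sign on the pair of $N$-terms is reversed. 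Both are harmless since $\dim N_\ell = \dim(\init_<(N))_\ell$ and $\dim I_\ell = \dim(\init_<(I))_\ell$, but it is worth tightening the order of appeals so that each step is justified when it is used.
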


We are almost ready to state and prove Lemma~\ref{lemma: GVD to GB}, but there are two final things to note. Firstly, in the situation in which we wish to apply Lemma~\ref{non_standard_grading}, the monomial ideal $A$ will be an ideal $\tilde{I}$ generated by the initial terms of a proposed Gr\"obner basis. Applying Lemma~\ref{non_standard_grading} will then allow us to conclude that the proposed basis is in fact Gr\"obner. Secondly, to apply Lemma~\ref{non_standard_grading}, we can see from its hypotheses that two separate isomorphisms are needed. The isomorphism between the graded pieces of $C/N$ and $I/N$ arises from the isomorphism $\varphi$ mentioned in Theorem~\ref{theorem: Klein Rajchgot}, and while Theorem~\ref{theorem: Klein Rajchgot} does not require that $\varphi$ be graded or that $I,C$ and $N$ be homogeneous, Lemma~\ref{non_standard_grading} does; in our case, $\varphi$ does respect the appropriate grading. The isomorphisms between the (shifted) graded pieces of $\init_<(C)/\init_<(N)$ and $\tilde{I}/\init_<(N)$, on the other hand,  must be explicitly constructed, and this occupies much of our proof below.

\begin{lemma}\label{lemma: GVD to GB}
Let $R = \mathbb{C}[x_1,\ldots,x_n]$ be a positively $\mathbb{Z}^d$-graded polynomial ring over $\mathbb{C}$. 
Let $I,C$ and $N$ be homogeneous ideals with respect to the given $\mathbb{Z}^d$-grading, such that $N \subseteq I \cap C$. Let $<$ be a $y$-compatible term order for some $y \in \{x_1,\ldots,x_n\}$ and assume that $y$ is a homogeneous element in $R$. Suppose further that $I$ is square-free in $y$ and that no term of any element of the reduced Gr\"obner basis of $N$ is divisible by $y$. Also assume that there exists an isomorphism $\varphi: C/N \stackrel{f/g}{\rightarrow} I/N$ of $R/N$-modules for some $f, g \in R$ not zero-divisors in $R/N$, where $in_y(f)/g = y$, and which shifts degrees by $\deg(y)$. In the notation of Theorem~\ref{theorem: Klein Rajchgot}, suppose that $\{q_1,\ldots,q_k,h_1,\ldots,h_\ell\}$ and $\{h_1,\ldots,h_\ell\}$ are Gr\"obner bases for $C$ and $N$ respectively, with respect to the $y$-compatible monomial order $<$. Suppose $r_i$ for $1 \leq i \leq k$ are polynomials in $R$ which do not contain any $y$'s, and such that $y q_i + r_i \in I$.   Then $\{yq_1+r_1,\ldots, yq_k+r_k, h_1,\ldots, h_\ell\}$ is a Gr\"obner basis for $I$ with respect to $<$. 
\end{lemma}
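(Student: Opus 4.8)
The plan is to verify the two hypotheses of Lemma~\ref{non_standard_grading} with the ideals $C$, $N$ as given, with $A = \tilde I := \langle \init_<(yq_1), \ldots, \init_<(yq_k), \init_<(h_1), \ldots, \init_<(h_\ell)\rangle$, and then conclude $\tilde I = \init_<(I)$; once that is known, the polynomials $\{yq_1+r_1, \ldots, yq_k+r_k, h_1, \ldots, h_\ell\}$ generate $I$ and have initial terms generating $\init_<(I)$, hence form a Gr\"obner basis. First I would check the containments $\init_<(N) \subseteq \tilde I \subseteq \init_<(I)$. The left containment is immediate since $\{h_1,\ldots,h_\ell\}$ is a Gr\"obner basis of $N$. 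For the right containment, note each $yq_i + r_i \in I$ so $\init_<(yq_i+r_i) \in \init_<(I)$; because $<$ is $y$-compatible and $y$ does not divide any term of $r_i$, we have $\init_<(yq_i+r_i) = \init_<(\init_y(yq_i+r_i)) = \init_<(yq_i) = y\init_<(q_i)$, so $\tilde I \subseteq \init_<(I)$. (Here I would also record that $\init_y(yq_i+r_i) = yq_i$, using square-freeness of $I$ in $y$ together with the fact that $y \nmid q_i$, which is part of the Theorem~\ref{theorem: Klein Rajchgot} setup that produced the $q_i$.)

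Next I would establish the two graded-isomorphism conditions. The isomorphism $(I/N)_\ell \cong (C/N)_{\ell - e}$ with $e = \deg(y)$ is exactly the hypothesis that $\varphi\colon C/N \xrightarrow{f/g} I/N$ is an $R/N$-module isomorphism shifting degrees by $\deg(y)$ — I would just remark that multiplication by $f/g$ is degree-preserving up to this shift and that $\varphi$ being an isomorphism of graded modules gives the vector-space isomorphisms on each graded piece. The more substantive task is the second condition: $(\tilde I/\init_<(N))_\ell \cong (\init_<(C)/\init_<(N))_{\ell-e}$ for all $\ell$. Since $\{q_1,\ldots,q_k,h_1,\ldots,h_\ell\}$ is a Gr\"obner basis of $C$, we have $\init_<(C) = \langle \init_<(q_1), \ldots, \init_<(q_k), \init_<(h_1), \ldots, \init_<(h_\ell)\rangle$, while $\tilde I = \langle y\init_<(q_1), \ldots, y\init_<(q_k), \init_<(h_1), \ldots, \init_<(h_\ell)\rangle$. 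I expect to build the isomorphism by exhibiting a monomial $\mathbb{K}$-basis of each side: a monomial $m \notin \init_<(N)$ lies in $\tilde I$ iff it is divisible by some $y\init_<(q_i)$ or some $\init_<(h_j)$, and lies in $\init_<(C)$ iff divisible by some $\init_<(q_i)$ or some $\init_<(h_j)$. Multiplication by $y$ should induce a bijection between the monomials in $\tilde I \setminus \init_<(N)$ and those in $\init_<(C)\setminus \init_<(N)$ that shifts degree by $\deg(y)$ — here I would use that $y$ does not divide any term of the reduced Gr\"obner basis of $N$ (so $y \nmid \init_<(h_j)$ and more generally $y$-divisibility interacts cleanly with $\init_<(N)$), and that $y \nmid q_i$. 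This bijection restricts correctly to give the needed isomorphism of graded pieces after passing to the quotients by $\init_<(N)$.

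The main obstacle I anticipate is the careful bookkeeping in this last step: one must show that the map ``multiply by $y$'' is well-defined and bijective on the relevant sets of monomials modulo $\init_<(N)$, which requires knowing precisely which monomials of $\tilde I$ survive in the quotient and matching them with those of $\init_<(C)$. The subtlety is that $\init_<(N)$ contributes to both $\tilde I$ and $\init_<(C)$, and one needs $y \nmid$ (generators of $\init_<(N)$) to guarantee that multiplication by $y$ does not accidentally land an otherwise-surviving monomial inside $\init_<(N)$ or vice versa; this is exactly where the hypothesis on the reduced Gr\"obner basis of $N$ is used. Once both hypotheses of Lemma~\ref{non_standard_grading} are verified, that lemma gives $\tilde I = \init_<(I)$, and since the proposed generating set of $I$ has initial terms that generate $\init_<(I)$, the standard criterion (e.g.\ \cite[Chapter 2.9, Theorem 3 and the remark following]{CLO}, or simply the definition of a Gr\"obner basis) shows it is a Gr\"obner basis of $I$, completing the proof.
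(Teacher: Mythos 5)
Your proposal is correct and takes essentially the same approach as the paper: both apply Lemma~\ref{non_standard_grading} with $A = \tilde I$, using $y$-compatibility to compute $\init_<(yq_i+r_i)=y\cdot\init_<(q_i)$ and then exhibiting multiplication by $y$ as the degree-$\deg(y)$-shifting isomorphism $\init_<(C)/\init_<(N) \to \tilde I/\init_<(N)$. The only difference is that you spell out the injectivity of the multiplication-by-$y$ map via a monomial bijection (using $y \nmid \init_<(h_j)$), whereas the paper asserts the isomorphism directly from the generator correspondence; your extra care there is sound and matches the intended argument.
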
 

\begin{proof} 
Let $\tilde{I} := \langle \init_<(yq_1+r_1),\ldots, \init_<(yq_k+r_k), \init_<(h_1),\ldots ,\init_<(h_{\ell}) \rangle \subseteq \init_<(I)$. To prove the desired conclusion of the lemma, it would suffice to show $\tilde{I} = \init_<(I)$. By assumption, we know $$\init_<(C) = \langle \init_<(q_1),\ldots,\init_<(q_k),\init_<(h_1),\ldots, \init_<(h_{\ell})\rangle$$ and $$\init_<(N) = \langle \init_<(h_1),\ldots, \init_<(h_{\ell})\rangle.$$
Since $<$ is a $y$-compatible monomial order, we have $\init_<(yq_i+r_i) = y\cdot\init_<(q_i)$ for $1\leq i\leq k$. Since 
$\init_<(C)/\init_<(N)$ is generated by $\langle \init_<(q_1), \ldots,\init_<(q_k)\rangle$ (where we slightly abuse notation and use the same symbols to denote elements in $\init_<(C)$ and their equivalence classes in the quotient) and similarly $\tilde{I}/\init_<(N)$ is generated by $\langle \init_<(yq_1+r_1), \ldots, \init_<(yq_k+r_k) \rangle$, the equality $\init_<(yq_i+r_i) = y\cdot\init_<(q_i)$ implies that the graded $R/\init_<(N)$-module map  $[\init_<(C)/\init_<(N)](-\mathrm{deg}(y)) \rightarrow \tilde{I}/\init_<(N)$ defined by multiplication by $y$ is an isomorphism, where $\deg(y) \in \mathbb{Z}^d$ is the degree of $y$ in the given $\mathbb{Z}^d$-grading. 

By assumption, we have the necessary isomorphisms of graded pieces of $C/N$ and $I/N$, so we may now apply Lemma \ref{non_standard_grading} with $A = \tilde{I}$ and $e = \mathrm{deg}(y)$ to conclude that $\tilde{I} = \init_<(I)$. This proves that $\{yq_1+r_1,\ldots,yq_k+r_k, h_1,\ldots,h_\ell\}$  is a Gr\"obner basis for $I$ with respect to $<$, as was to be shown. 
\end{proof}

We will use Theorem~\ref{theorem: Klein Rajchgot} and Lemma~\ref{lemma: GVD to GB} in the arguments below in an inductive process. More precisely, in order to prove that a certain set of generators for our Hessenberg patch ideals is a Gr\"obner basis, we will build a sequence of choices of $y, C, N$ etc., where at each stage we can show the relevant isomorphism, thus enabling us to apply the above results inductively to prove that our generating set is Gr\"obner.

 Finally, we record here -- for future use -- a version of a result of Klein and Rajchgot for the case of non-standard gradings. In fact, we could have used the result below in order to prove the results in the later sections, but we chose to use Lemma~\ref{lemma: GVD to GB} instead. 

\begin{proposition}
Let $R$ be a positively $\mathbb{Z}^d$-graded polynomial ring over an arbitrary field $\mathbb{K}$. Let $I = \langle yq_1+r_1,\dots, yq_k+r_k,h_1,\dots, h_\ell \rangle$ be a homogeneous ideal of $R$ with respect to the given $\mathbb{Z}^d$-grading, with $y = x_j$ some variable of $R$ and $y$ not dividing any term of any $q_i$ for $1 \leq i \leq k$ nor of any $h_j$ for $1 \leq j \leq \ell$. Fix a term order $<$, and suppose that $\mathcal{G}_C = \{q_1,\dots, q_k,h_1,\dots, h_\ell\}$ and $\mathcal{G}_N = \{h_1,\dots, h_\ell\}$ are Gr\"obner bases for the ideals they generate, which we call $C$ and $N$, respectively.  Assume that $\text{in}_<(yq_i+r_i) = y \cdot \text{in}_<(q_i)$ for all $1 \leq i \leq k$. Assume also that $\hgt(I)$, $\hgt(C)>\hgt(N)$ and that $N$ is unmixed.  Let $M = \begin{pmatrix}
q_1& \cdots & q_k\\
r_1& \cdots & r_k
\end{pmatrix}.$ If the ideal of $2$-minors of $M$ is contained in $N$, then the given generators of $I$ are a Gr\"obner basis.
\end{proposition}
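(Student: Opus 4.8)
The plan is to deduce this Proposition from Theorem~\ref{theorem: Klein Rajchgot} together with Lemma~\ref{lemma: GVD to GB}, so the main task is to produce the $R/N$-module isomorphism $\varphi: C/N \to I/N$ that both results require as a hypothesis. The key observation is that the $2\times 2$ minors of $M$ being contained in $N$ means that, modulo $N$, the rows $(q_1,\dots,q_k)$ and $(r_1,\dots,r_k)$ of $M$ are proportional; more precisely, working in $R/N$ (or rather in a suitable localization), one can find $f, g \in R$ that are not zero-divisors modulo $N$ and such that $g r_i \equiv f q_i \pmod N$ for all $i$, where the condition $\mathrm{in}_<(yq_i+r_i) = y\cdot\mathrm{in}_<(q_i)$ forces $\mathrm{in}_y(f)/g = y$. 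First I would make this precise: the vanishing of the $2$-minors says the matrix $\overline M$ over $R/N$ has rank $\le 1$; since $N$ is unmixed and $\mathrm{ht}(C) > \mathrm{ht}(N)$, the ideal $\langle \overline{q_1},\dots,\overline{q_k}\rangle$ is nonzero in $R/N$, so some $q_i$ is a non-zero-divisor mod $N$, and the proportionality gives the desired $f,g$ with $y q_i + r_i \equiv g^{-1}(g y + f) q_i = g^{-1}(gy+f)q_i$, i.e. the element $gy+f$ (or its class) serves as the multiplier $f/g$ in the statement of Theorem~\ref{theorem: Klein Rajchgot}.

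Once the multiplier is identified, I would set up the map $\varphi: C/N \to I/N$ by $\overline{q_i} \mapsto \overline{yq_i+r_i}$ (and $\overline{h_j}\mapsto \overline{h_j}$, which are zero in both quotients), and check that this is exactly multiplication by $f/g$ as an $R/N$-module map, hence well-defined; injectivity and surjectivity follow because $f/g$ acts invertibly on the relevant torsion-free module over $R/N$ localized appropriately — this is the standard "liaison via a non-zero-divisor" argument, and it is precisely the content that Lemma~\ref{non_standard_grading} and Lemma~\ref{lemma: GVD to GB} are designed to exploit. The hypotheses that $I$ is square-free in $y$ (immediate, since $y$ does not divide any $q_i$ or $h_j$ and the generators of $I$ are $y$-linear), that no term of the reduced Gröbner basis $\mathcal{G}_N$ of $N$ is divisible by $y$ (which we may assume after noting $y$ divides no term of any $h_j$, and the reduced Gröbner basis can be taken inside $\mathbb{K}[\mathbf{x}\setminus y][y]$ with no $y$ appearing), and $y$-compatibility of $<$, all need to be verified or arranged; these are routine given the stated assumptions.

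With $\varphi$ in hand, Theorem~\ref{theorem: Klein Rajchgot} gives that $\mathrm{in}_y(I) = C \cap (N + \langle y\rangle)$ is a geometric vertex decomposition, and then Lemma~\ref{lemma: GVD to GB} — applied with the given Gröbner bases $\mathcal{G}_C$ and $\mathcal{G}_N$, the polynomials $r_i$, and the fact that $\varphi$ shifts degrees by $\deg(y)$ (which holds because $I$ is homogeneous, $yq_i+r_i$ is homogeneous, and $q_i$ is homogeneous, forcing $\deg(yq_i+r_i) = \deg(y)+\deg(q_i)$) — immediately yields that $\{yq_1+r_1,\dots,yq_k+r_k,h_1,\dots,h_\ell\}$ is a Gröbner basis for $I$ with respect to $<$, as claimed. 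The main obstacle I anticipate is the bookkeeping around zero-divisors: extracting honest elements $f,g\in R$ (not merely in a localization) that are non-zero-divisors modulo $N$ and satisfy $g r_i \equiv f q_i \pmod N$ simultaneously for all $i$ from the hypothesis that the $2$-minors lie in $N$ requires the unmixedness of $N$ and the height inequalities in an essential way, and making that step rigorous — rather than the subsequent formal deductions — is where the real work lies.
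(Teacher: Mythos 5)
Your high-level plan matches the route the paper takes: the paper's proof simply instructs the reader to follow \cite[Corollary~4.13]{KleRaj} line-by-line, substituting Lemma~\ref{non_standard_grading} for the standard-graded linkage lemma and replacing the degree shift of $1$ by $\deg(y)$, and what you have sketched is an outline of exactly that argument --- use the $2$-minors condition to manufacture the $R/N$-module isomorphism $\varphi\colon C/N \to I/N$ given by multiplication by a fraction whose initial $y$-form is $y$, and then run the graded linkage argument (Lemma~\ref{lemma: GVD to GB}, which rests on Lemma~\ref{non_standard_grading}). Note in passing that once $\varphi$ is in hand, Lemma~\ref{lemma: GVD to GB} alone yields the Gr\"obner basis conclusion; invoking Theorem~\ref{theorem: Klein Rajchgot} separately is not needed for the claim as stated.

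The genuine gap is the step you yourself flag as ``the real work,'' and it does need to be filled. Two concrete issues. First, the inference ``$\langle \overline{q_1},\dots,\overline{q_k}\rangle$ is nonzero in $R/N$, so some $q_i$ is a non-zero-divisor mod $N$'' is not valid: a nonzero ideal can consist entirely of zero-divisors (take $N = \langle x_1 x_2\rangle$, $q_1 = x_1$, $q_2 = x_2$). What unmixedness of $N$ together with $\hgt(C)>\hgt(N)$ actually give, via prime avoidance, is that a suitable \emph{combination} $g = \sum a_i q_i + \sum b_j h_j \in C$ is a non-zero-divisor modulo $N$; setting $f := \sum a_i r_i$, the $2$-minors hypothesis then gives $g\, r_m \equiv f\, q_m \pmod N$ for all $m$. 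Second, the multiplier for $\varphi$ is $(gy+f)/g$, and its numerator $gy+f \equiv \sum a_i(yq_i+r_i) \pmod N$ lies in $I$ modulo $N$; but you must argue that this \emph{particular} element is also a non-zero-divisor modulo $N$, and the fact that $I$ meets the complement of every associated prime of $N$ (from $\hgt(I)>\hgt(N)$ and unmixedness) guarantees only that $I$ contains \emph{some} non-zero-divisor, not this one. Making $g$ and $gy+f$ simultaneously non-zero-divisors requires a joint prime-avoidance choice of the coefficients; this is precisely what the paper delegates to \cite[Corollary~4.13]{KleRaj}, and until it is carried out your proposal remains an outline rather than a proof.
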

\begin{proof}
Follow the proof of \cite[Corollary 4.13]{KleRaj} line-by-line, using Lemma \ref{non_standard_grading} in place of \cite[Lemma 4.12]{KleRaj} and changing the degree shifts from $1$ to $\deg(y)$.
\end{proof}

\subsection{Induction for regular nilpotent Hessenberg varieties}\label{subsec: induction liaison}

As we mentioned above, our liaison-theoretic argument proceeds by an induction on $n$. Before the generalities, we present an example which will illustrate the idea.

\begin{example}\label{example: inductive ideals} 
Let $n=5$ and $h=(2,3,4,5,5)$. 
Recall that we visualize the polynomials $f^{w_0}_{k,\ell}$ as the $(k,\ell)$-th matrix entries in a $5 \times 5$ matrix as follows:
\begin{equation*}
\begin{bmatrix}
0 & 0 & 0 & 0 & 0\\
1 & 0 & 0 & 0 & 0\\
f^{w_0}_{3,1} & 1 & 0 & 0 & 0\\
f^{w_0}_{4,1} & f^{w_0}_{4,2} & 1  &  0 &  0 \\
f^{w_0}_{5,1} & f^{w_0}_{5,2} & f^{w_0}_{5,3} & 1 & 0 \\
\end{bmatrix}
\end{equation*}
Since $h=(2,3,4,5,5)$ and the ideal $I_{w_0,h}$ is generated by the polynomials $f^{w_0}_{k,\ell}$ with $k>h(\ell)$, we have 
$$
I_{w_0,h} = \langle f^{w_0}_{3,1}, f^{w_0}_{4,1}, f^{w_0}_{4,2}, f^{w_0}_{5,1}, f^{w_0}_{5,2}, f^{w_0}_{5,3} \rangle.
$$ 
Now we can visualize a sequence of ideals $I_{w_0,h}(m)$ by considering a sequence of matrices with ``crossed out'' entries as follows
\begin{equation*}
\resizebox{.95\textwidth}{!}{    
$\begin{bmatrix}
0 & 0 & 0 & 0 & 0\\
1 & 0 & 0 & 0 & 0\\
f^{w_0}_{3,1} & 1 & 0 & 0 & 0\\
f^{w_0}_{4,1} & f^{w_0}_{4,2} & 1  &  0 &  0 \\
\xcancel{f^{w_0}_{5,1}} & \xcancel{f^{w_0}_{5,2}} & \xcancel{f^{w_0}_{5,3}} & 1 & 0 \\
\end{bmatrix}
\longrightarrow
\begin{bmatrix}
0 & 0 & 0 & 0 & 0\\
1 & 0 & 0 & 0 & 0\\
f^{w_0}_{3,1} & 1 & 0 & 0 & 0\\
f^{w_0}_{4,1} & f^{w_0}_{4,2} & 1  &  0 &  0 \\
\xcancel{f^{w_0}_{5,1}} & \xcancel{f^{w_0}_{5,2}} & f^{w_0}_{5,3} & 1 & 0 \\
\end{bmatrix}
\longrightarrow
\begin{bmatrix}
0 & 0 & 0 & 0 & 0\\
1 & 0 & 0 & 0 & 0\\
f^{w_0}_{3,1} & 1 & 0 & 0 & 0\\
f^{w_0}_{4,1} & f^{w_0}_{4,2} & 1  &  0 &  0 \\
\xcancel{f^{w_0}_{5,1}} & f^{w_0}_{5,2} & f^{w_0}_{5,3} & 1 & 0 \\
\end{bmatrix}
\longrightarrow
\begin{bmatrix}
0 & 0 & 0 & 0 & 0\\
1 & 0 & 0 & 0 & 0\\
f^{w_0}_{3,1} & 1 & 0 & 0 & 0\\
f^{w_0}_{4,1} & f^{w_0}_{4,2} & 1  &  0 &  0 \\
f^{w_0}_{5,1} & f^{w_0}_{5,2} & f^{w_0}_{5,3} & 1 & 0 \\
\end{bmatrix}$
}
\end{equation*}
We can now define the ideals $I_{w_0,h}(m)$ where the integer $m$ indicates the number of crossed-out entries, and $I_{w_0,h}(m)$ is generated by the subset of generators of $I_{w_0,h}$  which are \emph{not} crossed out. Thus for instance the left-most matrix above, with $3$ crossed-out entries, corresponds to the ideal $I_{w_0,h}(3)$ generated by $f^{w_0}_{3,1}, f^{w_0}_{4,1}$ and $f^{w_0}_{4,2}$. 
It is evident from this description that the $4$ matrices above corresponds to an increasing sequence of ideals
\[
I_{w_0,h}(3) \subset I_{w_0,h}(2) \subset I_{w_0,h}(1) \subset I_{w_0,h}(0)\]
and that $I_{w_0,h}(0)=I_{w_0,h}$.

\end{example}

We now formalize the construction given in Example~\ref{example: inductive ideals}. Suppose that $n \in {\mathbb{Z}}$ and $n \geq 3$. Let $h:[n]\rightarrow [n]$ be an indecomposable Hessenberg function. To avoid the trivial case in which the Hessenberg variety is equal to the whole flag variety, we additionally assume throughout this section that $h \neq (n,n,\ldots,n)$. In particular, there exists some $L \in [n]$ such that $h(L) <n$. Then
\begin{equation}\label{eq: def mu}
\mu(h):=\max\{L \, \mid \, h(L)<n\}
\end{equation}
is well-defined. 
Suppose now that $m$ is an integer such that $0\leq m \leq \mu(h)$. 
Next we define 
\begin{equation}\label{eq: Hhk}
H(h,m) := \{(k,\ell)\in [n]\times [n] \, \mid \,  h(\ell)<k<n\} \sqcup \{(k,\ell)\in [n]\times [n]: k=n, h(\ell)<n, \ell>m\}
\end{equation}
and also define 
\begin{equation}\label{eq: Iw0hk}
I_{w_0,h}(m) := 
\langle f^{w_0}_{k,\ell} \, \mid \, (k,\ell) \in H(h,m) \rangle.
\end{equation}
Notice that if 
$m < \mu(h)$ then 
there is at least one pair $(k,\ell)$ which is contained in the 
second set described in the RHS of~\eqref{eq: Hhk}, whereas if $m=\mu(h)$ then 
the second set is empty. 
Put another way, if $m<\mu(h)$ then $I_{w_0,h}(m)$ contains at least one polynomial $f^{w_0}_{k,\ell}$ for which the first index $k$ is equal to $n$, while there is no such generator for $I_{w_0,h}(m=\mu(h))$. Furthermore, it is easy to see that $I_{w_0,h} = I_{w_0,h}(0)$.

\begin{example} 
Continuing in the $n=5$ setting of Example~\ref{example: inductive ideals}, we see that $\mu(h)=3$ in that case. Moreover, as observed above, the ideal $I_{w_0,h}(3)$ contains no generators of the form $f^{w_0}_{n,\ell} = f^{w_0}_{5,\ell}$ for any $\ell$, because all such generators have been ``crossed off''. It is also easy to see that $I_{w_0,h}(0),I_{w_0,h}(1),I_{w_0,h}(2)$ still contain generators of the form $f^{w_0}_{5,\ell}$. 
\end{example}

As we just saw, the generators $f^{w_0}_{n,\ell}$ do not appear in $I_{w_0,h}(m)$ when $m=\mu(h)$. This allows us to make an inductive argument connecting $I_{w_0,h}(m)$ with an analogous ideal for the $n-1$ case, and it is the recursive structure of these ideals which allows us to prove our main results. 
We make this more precise in the next lemma.

\begin{lemma}\label{lemma: relabeling}
Suppose that $m=\mu(h)$ and $n>3$. Let  $\bar{h}:[n-1]\rightarrow [n-1]$ be the Hessenberg function on $[n-1]$ defined by $\bar{h}(\ell) = h(\ell)$ if $h(\ell)<n$ and $\bar{h}(\ell) =n-1$ otherwise. Denote the longest element of $S_{n-1}$ by $\overline{w_0}$. Then 
\begin{enumerate} 
\item the injective ring homomorphism 
$\varphi_{n-1,n}: \CC [\mathbf{x}_{\overline{w_0}}]\rightarrow \CC [\mathbf{x}_{w_0}]$ which sends $x_{i,j}$ to $x_{i+1,j}$ satisfies $ \varphi_{n-1,n}(f^{\overline{w_0}}_{k,\ell}) = f^{w_0}_{k,\ell}$  and 
\item the generators of $I_{w_0,h}(m)$ lie in the image of $\varphi_{n-1,n}$ and the ideal generated in $\CC[\mathbf{x}_{\overline{w_0}}]$ by their (unique) preimages under $\varphi_{n-1,n}$ is the ideal $I_{\overline{w_0}, \bar{h}}$ corresponding to the smaller Hessenberg function $\bar{h}$. 
\end{enumerate} 
\end{lemma}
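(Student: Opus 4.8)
The plan is to prove the two statements in order, with part (1) doing most of the real work and part (2) following by bookkeeping.

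For part (1), the key observation is the block structure already noted in the proof of Proposition~\ref{prop: fij inductive}: the lower-left $(n-1)\times(n-1)$ submatrix of $w_0 M$ (obtained by deleting the first row and last column) is, after relabeling, exactly a generic matrix of the form $\overline{w_0}\overline{M}$ parametrizing $\mathcal{N}_{\overline{w_0}} \subseteq \flag(\mathbb{C}^{n-1})$, where the variable $x_{i,j}$ of $\CC[\mathbf{x}_{\overline{w_0}}]$ corresponds to $x_{i+1,j}$ of $\CC[\mathbf{x}_{w_0}]$. First I would make this identification precise: the map $\varphi_{n-1,n}$ sending $x_{i,j}\mapsto x_{i+1,j}$ is manifestly an injective ring homomorphism onto the subring of $\CC[\mathbf{x}_{w_0}]$ generated by the variables $x_{i,j}$ with $i\geq 2$. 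Next I would verify that this identification is compatible with taking inverses: since the upper-right $k\times k$ submatrix of $(w_0M)^{-1}$ inverts the lower-left $k\times k$ submatrix of $w_0M$ (as used in the proof of Proposition~\ref{prop: fij inductive}), one gets $\varphi_{n-1,n}(y^{\overline{w_0}}_{i,j}) = y^{w_0}_{i,j}$ for the relevant entries, most cleanly seen via the cofactor formula of Lemma~\ref{lemma: y minor} together with the fact that the corresponding minors of $w_0M$ literally \emph{are} (up to the deleted first row / last column, which contribute the boundary $1$'s and $0$'s) the minors of $\overline{w_0}\overline{M}$. Finally, feeding this into the explicit formula of Lemma~\ref{lemma: fij formula}, equation~\eqref{eq: fkl in x and y}, for $f^{w_0}_{k,\ell}$ in terms of the $x$'s and $y$'s — and noting the formula for $f^{\overline{w_0}}_{k,\ell}$ in $\flag(\mathbb{C}^{n-1})$ has the identical shape with $n$ replaced by $n-1$ and indices shifted — gives $\varphi_{n-1,n}(f^{\overline{w_0}}_{k,\ell}) = f^{w_0}_{k,\ell}$ for all $k>\ell+1$ with $k\leq n-1$. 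The main obstacle here is purely the index bookkeeping: matching the ranges of summation in~\eqref{eq: fkl in x and y} for the two values $n$ and $n-1$ and checking the degree/boundary conventions line up exactly; there is no conceptual difficulty, just care needed with the $n+1-k$, $n+2-k$, $n-\ell$ shifts.

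For part (2), I would argue as follows. By definition~\eqref{eq: Hhk} with $m=\mu(h)$, the generators of $I_{w_0,h}(\mu(h))$ are precisely the $f^{w_0}_{k,\ell}$ with $h(\ell)<k<n$, so every such generator has first index $k\leq n-1$; by part (1) each of these lies in the image of $\varphi_{n-1,n}$ and its unique preimage is $f^{\overline{w_0}}_{k,\ell}$. It then remains to identify which pairs $(k,\ell)$ occur: the condition $h(\ell)<k<n$ together with the definition $\bar h(\ell)=h(\ell)$ when $h(\ell)<n$ and $\bar h(\ell)=n-1$ otherwise shows that $\{(k,\ell): h(\ell)<k<n\}$ is exactly $\{(k,\ell)\in[n-1]\times[n-1]: k>\bar h(\ell)\}$ — one checks this by splitting on whether $h(\ell)<n$ (in which case $h(\ell)<k\leq n-1 \iff \bar h(\ell)<k\leq n-1$) or $h(\ell)=n$ (in which case there is no $k$ with $h(\ell)<k<n$ on the left, and $\bar h(\ell)=n-1$ forces no $k$ on the right either). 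One should also record that $\bar h$ as defined is genuinely a Hessenberg function on $[n-1]$ (monotone, $\bar h(\ell)\geq \ell$ — the latter because $h(\ell)\geq \ell$ and if $h(\ell)=n$ then $\bar h(\ell)=n-1\geq \ell$ since $\ell\leq n-1$) and is indecomposable whenever $h$ is. Hence the ideal generated in $\CC[\mathbf{x}_{\overline{w_0}}]$ by the preimages is $\langle f^{\overline{w_0}}_{k,\ell}\mid k>\bar h(\ell)\rangle = I_{\overline{w_0},\bar h}$, as claimed. This part is essentially a combinatorial unwinding of the index sets and carries no real obstruction once part (1) is in hand.
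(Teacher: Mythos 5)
Your overall strategy---exploiting the block structure noted in the proof of Proposition~\ref{prop: fij inductive}, then feeding the cofactor description of the $y$'s into the explicit formula~\eqref{eq: fkl in x and y} for $f^{w_0}_{k,\ell}$---is exactly the content behind the paper's terse assertion that Claim~(1) is ``straightforward from the explicit descriptions,'' and your handling of part~(2) is the pure index-set unwinding the paper intends. However, there is a genuine index error in your intermediate claim about the $y$'s. You assert $\varphi_{n-1,n}(y^{\overline{w_0}}_{i,j}) = y^{w_0}_{i,j}$, but the correct identity is $\varphi_{n-1,n}(y^{\overline{w_0}}_{i,j}) = y^{w_0}_{i+1,j}$. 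Recall that $y^{w_0}_{i,j}$ sits at position $(n+1-i,\,n+1-j)$ of $(w_0M)^{-1}$. Under the identification of the upper-right $(n-1)\times(n-1)$ block of $(w_0M)^{-1}$ (rows $1,\ldots,n-1$, columns $2,\ldots,n$) with $(\overline{w_0}\overline{M})^{-1}$, the entry $y^{\overline{w_0}}_{i,j}$ at position $(n-i,\,n-j)$ of the small inverse lands at position $(n-i,\,n-j+1)=(n+1-(i+1),\,n+1-j)$ of $(w_0M)^{-1}$, which is $y^{w_0}_{i+1,j}$, not $y^{w_0}_{i,j}$. Equivalently, via Lemma~\ref{lemma: y minor}: expanding $\det M'_{n+1-j,\,n-i}$ along its last column (a standard basis vector) picks up a factor $(-1)^n$, and comparing the global signs $(-1)^{n(n-1)/2}$ vs.\ $(-1)^{(n-1)(n-2)/2}$ (whose exponents differ by $n-1$) gives the same $i\mapsto i+1$ shift.

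This matters for the conclusion: if $\varphi_{n-1,n}(y^{\overline{w_0}}_{i,j})$ really equaled $y^{w_0}_{i,j}$, substituting into~\eqref{eq: fkl in x and y} for $f^{\overline{w_0}}_{k,\ell}$ and applying $\varphi_{n-1,n}$ would produce terms $y^{w_0}_{n-k,\,\cdot}$ where $f^{w_0}_{k,\ell}$ requires $y^{w_0}_{n+1-k,\,\cdot}$, and the desired equality $\varphi_{n-1,n}(f^{\overline{w_0}}_{k,\ell})=f^{w_0}_{k,\ell}$ would fail. With the corrected identity, the reindexing $s'=s+1$ in the summation of~\eqref{eq: fkl in x and y} lines up exactly and the claim goes through. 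This is precisely the ``index bookkeeping'' you flagged as the only obstacle, but the check must actually be carried out: a slip here changes the statement being proven. (A slightly cleaner alternative, which sidesteps the $y$-indices entirely, is to observe directly that for $1\leq k,\ell\leq n-1$ the entry $f^{w_0}_{k,\ell}$ equals $(A^{-1}\bar{\mathsf{N}}A)_{k,\ell}$, where $A$ is the lower-left $(n-1)\times(n-1)$ block of $w_0M$ and $\bar{\mathsf{N}}$ the regular nilpotent of size $n-1$; then $\varphi_{n-1,n}(f^{\overline{w_0}}_{k,\ell}) = f^{w_0}_{k,\ell}$ follows immediately from $\varphi_{n-1,n}(\overline{w_0}\overline{M})=A$.) Part~(2) is correct as written.
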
 

\begin{proof} 
Claim (1) is straightforward to see from the explicit descriptions of the generators $f^{w_0}_{k,\ell}$ given in Section~\ref{subsec: reg nilp Hess}. Claim (2) then follows from Claim (1) from the definition of $\varphi_{n-1,n}$ and the assumption that $m=\mu(h)$. 
\end{proof} 

The identification of the generators
for $I_{w_0,h}(m) \subseteq \CC [\mathbf{x}_{w_0}]$ with those of 
$I_{\overline{w_0}, \bar{h}} \subseteq \CC [\mathbf{x}_{\overline{w_0}}]$ as described in Lemma~\ref{lemma: relabeling} will be a key component of our arguments. We give a simple example to illustrate the idea. 

\begin{example}\label{injection_example}
For the purpose of this example, let $w_0$ denote the longest element in $S_5$ and $\overline{w_0}$ denote the longest element in $S_4$. We can compare the polynomials  $f_{k,\ell}^{\overline{w_0}}$ and $f_{k,\ell}^{w_0}$ explicitly in this case. For $n=4$ we can compute that $f^{\overline{w_0}}_{3,1} = x_{3,1} - x_{2,2}, f^{\overline{w_0}}_{4,1} = x_{2,1}  - x_{1,2}- x_{1,3}(x_{3,1}-x_{2,2})$ and $f^{\overline{w_0}}_{4,2} = x_{2,2} - x_{1,3}$, whereas for $n=5$ we have $f^{w_0}_{3,1} = x_{4,1}-x_{3,2}, f^{w_0}_{4,1} = x_{3,1} - x_{2,2} - x_{2,3}(x_{4,1} - x_{3,2})$ and 
$f^{w_0}_{4,2} = x_{3,2} - x_{2,3}$. This illustrates the claim of the above lemma that if  the variables $x_{i,j}$ for the $n=4$ case get sent to $x_{i+1,j}$ in the $n=5$ case then the polynomials $f^{\overline{w_0}}_{k,\ell}$ map to $f^{w_0}_{k,\ell}$. 
\end{example}

We also need the following result of the first author, Cummings, Rajchgot, and Van Tuyl \cite{CDSRVT}. 

\begin{theorem}\label{theorem: extension GVDs}(\cite[Theorem 2.9]{CDSRVT})
Let $I \subsetneq R=k[x_1,\ldots,x_n]$ and $J \subsetneq S = k[y_1,\ldots,y_m]$ be two proper ideals. Then $I$ and $J$ are geometrically vertex decomposable if and only if $I+J$ is geometrically vertex decomposable in $R \otimes_k S = k[x_1,\ldots, x_n, y_1, \ldots, y_m]$. 
\end{theorem}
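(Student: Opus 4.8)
The plan is to prove both implications simultaneously by induction on the total number of variables $n+m$, peeling off one variable at a time, the point being that the variables of $R$ and those of $S$ never interact (the base of the induction is subsumed by the terminal cases below). First dispose of the terminal cases of Definition~\ref{def: g v decomposable}: if $I$ and $J$ are each generated by a (possibly empty) list of indeterminates then $I+J$ is generated by the union of those lists; conversely, if $I+J=\langle x_{i_1},\dots ,x_{i_a},y_{j_1},\dots ,y_{j_b}\rangle$, then contracting to $k[\mathbf x]$ gives $I\subseteq\langle x_{i_1},\dots ,x_{i_a}\rangle$, while each relation $x_{i_s}=p+q$ with $p\in I$ and $q\in J$ forces $q\in k[\mathbf x]\cap k[\mathbf y]=k$, hence $q=0$ since $J$ is proper, hence $x_{i_s}=p\in I$; so $I=\langle x_{i_1},\dots ,x_{i_a}\rangle$ is generated by indeterminates, and likewise $J$. (The case $I+J=\langle 1\rangle$ does not arise, since $I$ and $J$ are proper.)

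The recursive step rests on a handful of elementary facts about ideals living in disjoint sets of variables, which I would record as preliminary lemmas. Write $T:=R\otimes_k S$. (i) $I+J$ is proper if and only if $I$ and $J$ are both proper, and $I+J$ is unmixed if and only if $I$ and $J$ are both unmixed; the unmixedness statement comes from the additivity $\dim T/(P+Q)=\dim R/P+\dim S/Q$ (and $\dim T/(I+J)=\dim R/I+\dim S/J$) together with the behaviour of associated primes under $\otimes_k$, namely $\{P+Q: P\in\ass(I),\,Q\in\ass(J)\}\subseteq\ass(I+J)$ and, conversely, that every associated prime of $I+J$ has this form. (ii) For a monomial order $<$ on $T$ restricting to orders on $R$ and $S$, the union of Gr\"obner bases of $I$ and of $J$ is a Gr\"obner basis of $I+J$, since the $S$-pairs across the two sets have coprime leading monomials; consequently, if $y=x_i$ is a variable of $R$ and $<$ is $x_i$-compatible, then $\init_{x_i}(I+J)=\init_{x_i}(I)+J$, $C_{x_i,I+J}=C_{x_i,I}+J$, and $N_{x_i,I+J}=N_{x_i,I}+J$ (the generators of $J$ carry $x_i$-degree $0$, so they join the $N$-part). (iii) For ideals $A,B\subseteq R$ one has $(A\cap B)+J=(A+J)\cap(B+J)$ in $T$, by flatness of $R\otimes_k(S/J)$ over $R$. (iv) If $A,B\subseteq R$ and $A+J=B+J$ in $T$ with $J$ proper, then $A=B$, by contracting to $k[\mathbf x]$ and using $(A+J)\cap k[\mathbf x]=A$. (v) $N_{x_i,I}$ is a proper ideal whenever $I$ is proper, because its generators are precisely the members of the chosen Gr\"obner basis of $I$ that involve no $x_i$, and these genuinely lie in $I$. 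Throughout I would write $C'$ and $N'$ for the contractions of $C_{x_i,\cdot}$ and $N_{x_i,\cdot}$ to $k[\mathbf x\setminus x_i]$; since these ideals are already generated by polynomials free of $x_i$, the ``contraction'' changes nothing but the ambient ring.

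For the forward direction, assume $I$ and $J$ are GVD; by (i), $I+J$ is unmixed. If both are generated by indeterminates we are done by the first paragraph, so (relabelling if needed) assume $I$ is not; since $I$ is proper it is not $\langle 1\rangle$, so by Definition~\ref{def: g v decomposable} it has a geometric vertex decomposition $\init_{x_i}(I)=C_{x_i,I}\cap(N_{x_i,I}+\langle x_i\rangle)$ with $C'_{x_i,I}$ and $N'_{x_i,I}$ GVD. Fix an $x_i$-compatible order on $T$ extending one that realizes this. By (ii) and (iii),
\[
\init_{x_i}(I+J)=\init_{x_i}(I)+J=\bigl(C_{x_i,I}\cap(N_{x_i,I}+\langle x_i\rangle)\bigr)+J=C_{x_i,I+J}\cap\bigl(N_{x_i,I+J}+\langle x_i\rangle\bigr),
\]
a geometric vertex decomposition of $I+J$ with respect to $x_i$. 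Its contractions to $k[\mathbf x\setminus x_i,\mathbf y]$ (a ring with $n-1+m$ variables) are $C'_{x_i,I}+J$ and $N'_{x_i,I}+J$; each is GVD, either trivially when the $R$-side contraction is $\langle 1\rangle$ (then the sum is $\langle 1\rangle$), or by the inductive hypothesis applied to the two proper GVD ideals involved. Hence $I+J$ is GVD.

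For the backward direction, assume $I+J$ is GVD; by (i), $I$ and $J$ are unmixed (and proper, by hypothesis). Dispose of the terminal cases as before; otherwise $I+J$ has a geometric vertex decomposition with respect to some variable, which we may take to be a variable $x_i$ of $R$ (swapping $I$ and $J$ otherwise). By (ii) this decomposition reads $\init_{x_i}(I)+J=(C_{x_i,I}+J)\cap(N_{x_i,I}+\langle x_i\rangle+J)$, and by (iii) the right-hand side is $\bigl(C_{x_i,I}\cap(N_{x_i,I}+\langle x_i\rangle)\bigr)+J$; so (iv) yields $\init_{x_i}(I)=C_{x_i,I}\cap(N_{x_i,I}+\langle x_i\rangle)$, a geometric vertex decomposition of $I$. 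For the contractions: $C'_{x_i,I}+J=C'_{x_i,I+J}$ is GVD, so if it equals $\langle 1\rangle$ then (since $J$ is proper) $C'_{x_i,I}=\langle 1\rangle$ is GVD, and otherwise the inductive hypothesis makes $C'_{x_i,I}$ GVD; likewise $N'_{x_i,I}+J=N'_{x_i,I+J}$ is GVD, and by (v) and (i) it is a sum of two proper ideals in disjoint variables, hence proper, so the inductive hypothesis makes both $N'_{x_i,I}$ and $J$ GVD. Thus $I$ satisfies the GVD recursion with respect to $x_i$ and $J$ is GVD, completing the induction. The step I expect to require the most care is fact (i): associated primes and dimension of $R\otimes_k S$ are well behaved over any field, but making precise the direction ``every associated prime of $I+J$ has the form $P+Q$'' takes a little care with the base change $\kappa(P)\otimes_k\kappa(Q)$, especially over imperfect fields; a secondary, pervasive source of friction is the bookkeeping for degenerate steps (where some $C_{y,\cdot}$ equals $\langle 1\rangle$) and the check that $C_{y,I}$ and $N_{y,I}$ do not depend on the chosen Gr\"obner basis, so that the identities in (ii) are unambiguous.
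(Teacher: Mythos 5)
The paper does not prove Theorem~\ref{theorem: extension GVDs}; it is imported verbatim from \cite[Theorem 2.9]{CDSRVT}, so there is no in-paper argument to compare against. Taken on its own terms, your strategy — a simultaneous double induction on the total number of variables, reducing both implications to a handful of ``disjoint-variable'' facts together with the intrinsic (Gr\"obner-basis-independent) description of $C_{y,\cdot}$ and $N_{y,\cdot}$ — is the natural route and the overall architecture is sound. Facts (ii)--(v) are correct as stated, and the way you recombine them via Lemma~\ref{theorem: Klein Rajchgot}-style bookkeeping in the inductive step is convincing in both directions.

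Two places need repair or genuine work. First, the converse terminal case is argued incorrectly: from $x_{i_s}\in I+J$ (an ideal of $T=R\otimes_k S$) you cannot in general write $x_{i_s}=p+q$ with $p\in I\subset R$ and $q\in J\subset S$ as \emph{polynomials}; one only has $T$-linear combinations of generators, so the deduction $q=x_{i_s}-p\in k[\mathbf x]\cap k[\mathbf y]$ is unjustified. The conclusion is nonetheless correct and follows immediately from your own fact (iv): since $S/J\neq 0$, the map $R/I\to (R/I)\otimes_k(S/J)\cong T/(I+J)$ is injective, hence $(I+J)\cap R=I$, and contracting $\langle x_{i_1},\dots,x_{i_a},y_{j_1},\dots,y_{j_b}\rangle$ to $R$ gives $I=\langle x_{i_1},\dots,x_{i_a}\rangle$ (and symmetrically for $J$). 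Second, fact (i) on unmixedness is, as you anticipate, the real gap and cannot be left as a remark: the inclusion $\{P+Q\}\subseteq\ass(I+J)$ fails as stated over non-algebraically-closed fields because $(R/P)\otimes_k(S/Q)$ need not be a domain (e.g.\ $\CC\otimes_{\mathbb R}\CC\cong\CC\times\CC$). What you actually need — and what is true — is that every $\mathfrak q\in\ass(T/(I+J))$ lies over some $P+Q$ with $P\in\ass(R/I)$, $Q\in\ass(S/J)$, and that every prime minimal over $P+Q$ has height $\hgt P+\hgt Q$, because $(R/P)\otimes_k(S/Q)$ is equidimensional of dimension $\dim R/P+\dim S/Q$; this must be written out carefully (or cited). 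Finally, the well-definedness of $C_{y,\cdot}$ and $N_{y,\cdot}$ that underlies (ii) should be made explicit by recalling that, by \cite[Theorem 2.1]{KMY}, these are determined intrinsically by $\init_y(\cdot)$ (as a saturation and a contraction of $\init_y(\cdot)$ respectively) rather than by a choice of Gr\"obner basis.
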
 

We can now state and prove the first main result of this section.

\begin{theorem}\label{GVD_main}
Let $n$ be a positive integer with $n \geq 3$.  Let $h: [n] \to [n]$ be an indecomposable Hessenberg function, and let $0\leq m\leq \mu(h)$. Then the ideal $I_{w_0,h}(m)$ is geometrically vertex decomposable.
\end{theorem}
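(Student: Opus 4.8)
The plan is to induct on $n$. The base case $n = 3$ is small enough to handle directly: for $n=3$ the only indecomposable Hessenberg functions are $(2,3,3)$ and $(3,3,3)$, and in each case the ideals $I_{w_0,h}(m)$ are generated by a subset of the $f^{w_0}_{k,\ell}$, each of which is (up to sign) a single indeterminate plus lower-order terms; one checks directly from the explicit formulas that these are triangular complete intersections, hence $\init_{<_3}$ is an ideal of indeterminates, hence GVD by Lemma~\ref{lem:initGVD} (or Corollary~\ref{corollary: lci triangular gvd}). For the inductive step, suppose the result holds for $n-1$ and fix $n > 3$, an indecomposable $h$, and $m$ with $0 \le m \le \mu(h)$; we may assume $h \neq (n,\ldots,n)$ so that $\mu(h)$ is defined, since otherwise $I_{w_0,h}(m)$ is the zero ideal, which is trivially GVD.

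I would split into two cases according to whether $m = \mu(h)$ or $m < \mu(h)$. When $m = \mu(h)$: by Lemma~\ref{lemma: relabeling}, the generators of $I_{w_0,h}(m)$ lie in the image of the injection $\varphi_{n-1,n}\colon \CC[\mathbf{x}_{\overline{w_0}}] \to \CC[\mathbf{x}_{w_0}]$ and their preimages generate $I_{\overline{w_0},\bar h}$ in the smaller ring. By the induction hypothesis (applied with $m = 0$, using that $I_{\overline{w_0},\bar h} = I_{\overline{w_0},\bar h}(0)$), the ideal $I_{\overline{w_0},\bar h}$ is GVD. Now $\CC[\mathbf{x}_{w_0}]$ is a polynomial ring in the $\varphi_{n-1,n}$-image variables together with the ``extra'' variables $x_{1,1}, x_{1,2}, \ldots, x_{1,n-1}$ (i.e. the new top row), and $I_{w_0,h}(m)$ is exactly the extension of $I_{\overline{w_0},\bar h}$ along the inclusion of the smaller polynomial ring into the larger one, with no generators involving the extra variables. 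So $I_{w_0,h}(m) = J + \langle 0 \rangle$ where $J$ is GVD in the subring; applying Theorem~\ref{theorem: extension GVDs} with the second ideal being the zero ideal of $\CC[x_{1,1},\ldots,x_{1,n-1}]$ (or a trivially GVD ideal) yields that $I_{w_0,h}(m)$ is GVD in $\CC[\mathbf{x}_{w_0}]$.

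When $m < \mu(h)$: here $I_{w_0,h}(m)$ contains at least one generator $f^{w_0}_{n,\ell}$ with $\ell = m+1 \le \mu(h)$ and $h(\ell) < n$, and I would geometrically vertex decompose with respect to the variable $y := x_{1,m+1}$, which by Lemma~\ref{lemma: initial term and one variable}(1) is (up to sign) the initial term of $f^{w_0}_{n,m+1}$. From the recursion of Proposition~\ref{prop: fij inductive}, $f^{w_0}_{n,m+1} = -y + (\text{terms not involving }y)$, while by Lemma~\ref{lemma: y info ADGH}(1) and Lemma~\ref{lemma: initial term and one variable}(3), no other generator of $I_{w_0,h}(m)$ involves $y$ at all. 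Thus $I_{w_0,h}(m)$ is square-free in $y$, $\init_y$ is $<_n$-compatible near $f^{w_0}_{n,m+1}$, and in the notation of Definition~\ref{def: GVD} one gets $C_{y,I} = N_{y,I} + \langle 1 \rangle$-type behavior: more precisely, $C_{y,I_{w_0,h}(m)}$ is generated by $1$ (coming from the $y$-coefficient of $f^{w_0}_{n,m+1}$, which is a unit) together with the other generators, so $C_{y,I_{w_0,h}(m)} = \langle 1 \rangle$, while $N_{y,I_{w_0,h}(m)}$ is generated by the remaining generators — and those remaining generators, together with the relation $y = (\text{stuff})$, show $I_{w_0,h}(m)$ maps isomorphically onto $I_{w_0,h}(m+1)$ after eliminating $y$. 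Concretely, since the $y$-coefficient of $f^{w_0}_{n,m+1}$ is a unit, we may use that generator to solve for $y$ and substitute; the resulting ideal in $\CC[\mathbf{x}_{w_0} \setminus y]$ is precisely $I_{w_0,h}(m+1)$ (the crossed-out-entry picture of Example~\ref{example: inductive ideals}), which is GVD by the ``downward'' induction on $\mu(h) - m$. Because this is a degenerate geometric vertex decomposition of the type $C_{y,I} = \langle 1 \rangle$, the definition of GVD (Definition~\ref{def: g v decomposable}) requires only unmixedness of $I_{w_0,h}(m)$ and that the contractions of $N_{y,I}$ and $C_{y,I}$ be GVD; $C_{y,I} = \langle 1 \rangle$ is GVD, $N_{y,I}$ contracts to $I_{w_0,h}(m+1)$ which is GVD by the inner induction, and unmixedness follows because $I_{w_0,h}(m)$ is a complete intersection (Remark~\ref{remark: complete intersection} applies, since the argument of Lemma~\ref{lemma: initial term and one variable} shows each $I_{w_0,h}(m)$ is a triangular complete intersection with respect to $<_n$), hence Cohen–Macaulay, hence unmixed.

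\textbf{The main obstacle} I anticipate is verifying cleanly that the geometric vertex decomposition in the case $m < \mu(h)$ really has the asserted form — i.e. that $C_{y,I_{w_0,h}(m)}$, $N_{y,I_{w_0,h}(m)}$, and the initial ideal $\init_y$ fit together as $\init_y(I) = C_{y,I} \cap (N_{y,I} + \langle y \rangle)$, and that the contraction of $N_{y,I}$ is exactly $I_{w_0,h}(m+1)$. The cleanest route is probably not to check the intersection identity by hand but to invoke Theorem~\ref{theorem: Klein Rajchgot} (or its non-standard-graded analogue) with an explicitly constructed module isomorphism $\varphi\colon C/N \to I/N$; since $C = \langle 1\rangle$ here this is nearly trivial, but one must be careful that $N = N_{y,I}$ is generated by a Gröbner basis none of whose terms involve $y$, which follows from Corollary~\ref{cor: fkl are a GB} (applied to $I_{w_0,h}(m+1)$, itself a triangular complete intersection) together with Lemma~\ref{lemma: initial term and one variable}. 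An alternative, and perhaps more robust, packaging of the whole argument would be to first establish that \emph{every} $I_{w_0,h}(m)$ is a triangular complete intersection with respect to $<_n$ — which follows from essentially the same reasoning as Lemma~\ref{lemma: initial term and one variable} and Remark~\ref{remark: is lci}, since the defining conditions of Definition~\ref{definition: lci triangular} only care about which subset of the $f^{w_0}_{k,\ell}$ one takes — and then conclude GVD directly from Corollary~\ref{corollary: lci triangular gvd}, bypassing the liaison machinery entirely; I would present the inductive/liaison argument as the intended one (matching the section's stated purpose) but remark that this shortcut exists.
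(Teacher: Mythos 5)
Your overall strategy matches the paper's quite closely: both arguments split on whether $m = \mu(h)$ or $m < \mu(h)$, both handle the $m = \mu(h)$ case by Lemma~\ref{lemma: relabeling} plus Theorem~\ref{theorem: extension GVDs} and induction on $n$, and both handle the $m < \mu(h)$ case by a degenerate geometric vertex decomposition with $C = \langle 1 \rangle$ and $N = I_{w_0,h}(m+1)$, appealing to Theorem~\ref{theorem: Klein Rajchgot} to get the decomposition and decrementing the ``crossed-out'' count to conclude by an inner induction. Your base case ($n=3$ outright) differs from the paper's base case ($r = \lvert H(h,m)\rvert = 1$ for any $n$), but either is a valid way to bottom out the double recursion.

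There is one concrete indexing error and one substantive methodological difference. The error: you take $y := x_{1,m+1}$ and assert it is (up to sign) $\init_{<_n}(f^{w_0}_{n,m+1})$, but by Lemma~\ref{lemma: initial term and one variable}(1), with $k=n$ and $\ell = m+1$, the initial term is $-x_{n+1-k,\ell+1} = -x_{1,m+2}$; the paper correctly sets $y = x_{1,m+2}$. The methodological difference: for unmixedness you invoke the triangular complete intersection property (via Remark~\ref{remark: complete intersection}) to conclude $I_{w_0,h}(m)$ is a complete intersection, hence Cohen--Macaulay, hence unmixed. That is correct, but it re-imports exactly the machinery from Sections~\ref{sec: background GVD}--\ref{sec: fij} that the liaison argument is explicitly designed to bypass (so that the method can be generalized to $w \neq w_0$, where $I_{w,h}$ need not be a triangular complete intersection). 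The paper instead argues unmixedness intrinsically within the liaison framework: since the decomposition is degenerate with $C = \langle 1 \rangle$, the variable $y$ can be eliminated, giving $R/I_{w_0,h}(m) \cong R/(N + \langle y \rangle)$, and then one shows $N + \langle y \rangle$ is unmixed because $N$ is unmixed by the inner induction and no term of its reduced Gr\"obner basis involves $y$. If you keep the triangular-CI unmixedness shortcut, you lose the independence of the argument; if your goal is to mirror the section's intent (as you say), you should replace it with the elimination-of-$y$ argument. Similarly, to see that no term of the reduced Gr\"obner basis of $N$ involves $y$, you do not need to invoke Corollary~\ref{cor: fkl are a GB} (which anyway applies to $I_{w_0,h}$, not directly to the partial ideals): it suffices that $y$ appears in none of the given generators of $N$, so Buchberger's algorithm never introduces it.
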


\begin{proof}
Let $r := \lvert H(h,m) \rvert$, i.e., the number of generators $f^{w_0}_{k,\ell}$ defining $I_{w_0,h}(m)$.
We will prove the result using a double induction argument on $n\geq 3$ and on $r :=|H(h,m)|$. 

First, we show that the claim of the theorem is true for $r=1$ and any $n\geq 3$. To do this, we must check the conditions of Definition~\ref{def: g v decomposable}. We first address the unmixedness condition. In the case $r=1$, the ideal $I_{w_0,h}(m)$ is principal, generated by a single element $f^{w_0}_{k,\ell}$.  Being a principal ideal, it is immediate in this case that $I_{w_0,h}(m)$ is a complete intersection and hence unmixed (see \cite[Proposition 18.13 \& Corollary 18.14]{Eisenbud}). Next, we need to show that $I_{w_0,h}(m)$ satisfies either condition (1) or (2) of Definition~\ref{def: g v decomposable}. Since $I_{w_0,h}(m)$ is neither $\langle 1 \rangle$ nor generated by indeterminates, we must show that it satisfies condition (2). To do this, note that the only way that $r=1$ can occur is if either $m=\mu(h)-1$ and the unique generator is $f^{w_0}_{n, \mu(h)}$, or, $m=\mu(h)$ and the unique generator is $f^{w_0}_{n-1,1}$. In either case, we note that the ideal being principal implies that the generator also forms a Gr\"obner basis with respect to $<_n$. 

We take cases. Suppose $m=\mu(h)-1$ and the unique generator is $f^{w_0}_{n, \mu(h)}$. By Lemma~\ref{lemma: initial term and one variable} we know $\mathrm{in}_{<_n}(f^{w_0}_{n,\mu(h)}) = - x_{1, \mu(h)+1}$. (Note that $\mu(h)<n-1$ since we assume $h$ is indecomposable. Hence $\mu(h)+1<n$ and thus $x_{1,\mu(h)+1}$ is a valid variable in $\mathbf{x}_{w_0}$.) 
This implies that if we choose $y=x_{1,\mu(h)+1}$ in the construction outlined in Section~\ref{sec: background GVD}, then the corresponding ideals are $C_{y,I_{w_0,h}(m)} =\langle 1\rangle$ and $N_{y,I_{w_0,h}(m)} =\langle 0\rangle$, both of which are geometrically vertex decomposable. 
Moreover, $\init_{<_n}(I_{w_0,h}(m)) = \langle x_{1,\mu(h)+1} \rangle
= C_{y, I_{w_0,h}(m)} \cap (N_{y, I_{w_0, h}(m)} + \langle y=x_{1,\mu(h)+1}\rangle)$, so we obtain a geometric vertex decomposition by Definition~\ref{def: g v decomposable}. (Note that since $f^{w_0}_{n,m+1}$ is square-free in $y$, this also follows from \cite[Theorem 2.1]{KMY}). Now we take the other case; suppose the unique generator of $I_{w_0,h}(m)$ is $f^{w_0}_{n-1,1}$. Note that by assumption on indecomposability, this case can only occur if $n-1=k>\ell+1=2$, i.e., $n>3$. Now by Lemma~\ref{lemma: initial term and one variable} we know $\mathrm{in}_{<_n}(f^{w_0}_{n-1,1}) = x_{2,2}$ which is a valid variable in $\mathbf{x}_{w_0}$ since $n>3$. Choosing $y=x_{2,2}$ and proceeding with the argument as in the previous case yields the desired claim. This concludes the proof for the cases in which $r=1$, for any $n \geq 3$.

We now proceed with the inductive argument. Let $r \geq 2$ and fix an $n \geq 3$. We assume that the claim holds for any $n \geq 3$ and for $r-1$. Suppose $h$ and $m$ are such that $\lvert H(h,m) \rvert = r$ and consider the ideal $I_{w_0,h}(m)$. Unlike the base cases, we do not a priori have a Gr\"obner basis for $I_{w_0,h}(m)$, so checking the conditions of a geometric vertex decomposition using the Gr\"obner basis construction of Definition~\ref{def: GVD} is not as immediate. Thus, instead of working directly with Gr\"obner bases, we will use the result of Klein and Rajchgot recorded in Theorem~\ref{theorem: Klein Rajchgot} to construct a geometric vertex decomposition.

We consider two cases. 
Suppose first that $m\leq\mu(h)-1$ and set $I=I_{w_0,h}(m)$.  We define $C := \langle 1\rangle$ and $N := I_{w_0,h}(m+1)$. Clearly $N\subset C\cap I_{w_0,h}(m)$ as $C=R=\CC [\mathbf{x}_{w_0}]$.  Since $h$ is indecomposable, we have $\mu(h) \leq n-2$,
and since $m \leq \mu(h)-1$ by assumption we have $m+2 < n$. Hence $x_{1,m+2}\in\CC[\mathbf{x}_{w_0}]$ and we can set $y := x_{1,m+2}$.  We know from its definition that $I_{w_0,h}(m)$ contains a generator of the form $f^{w_0}_{n,m+1}$ where $f^{w_0}_{n,m+1} \neq 1$ (since $n> (m+1)+1=m+2$). In this situation, by Lemma~\ref{lemma: initial term and one variable}, $I_{w_0,h}(m)$ is square-free in $y=x_{1,m+2}$, and the generator $f^{w_0}_{n,m+1}$ is the only element in the set of generators $f^{w_0}_{k,\ell}$ of $I_{w_0,h}(m)$ in which the variable $y$ appears. Therefore, it follows from the standard constructions of Gr\"obner bases that no term of any element of the reduced Gr\"obner basis of $N=I_{w_0,h}(m+1)$ with respect to $<_n$ is divisible by $y$.   Moreover, we can see that $I/N$ is a rank one $R/N$-module generated by $f^{w_0}_{n,m+1}$, and it is a free module since $f^{w_0}_{n,m+1}$ contains a $y$ whereas no generator in $N$ contains a $y$ (ie. $f^{w_0}_{n,m+1}$ is not a zero-divisor in $R/N$). We can also see that $C/N$ is the rank one $R/N$-module generated by $1$. 
It follows that multiplication by $-f^{w_0}_{n,m+1}$ defines an isomorphism $R/N \cong C/N\rightarrow I/N$, where we know that $\init_{y}(-f^{w_0}_{n,m+1}) = y$. Applying Theorem~\ref{theorem: Klein Rajchgot}, we conclude that these choices of $y, C, N$ define a geometric vertex decomposition $\init_y(I_{w_0, h}(m)) = C \cap (N + \langle y \rangle)$ of $I_{w_0,h}(m)$. Now note that $N$ corresponds to an ideal with $r-1$ generators, so by induction on $r$, $N$ is geometrically vertex decomposable, and $C=\langle 1 \rangle$ is geometrically vertex decomposable by definition. To complete the proof that $I_{w_0,h}(m)$ is geometrically vertex decomposable, the only thing that remains to prove is that $I_{w_0,h}(m)$ is unmixed. 

To see that $I_{w_0,h}(m)$ is unmixed, observe that $\init_y(I_{w_0, h}(m)) = C \cap (N + \langle y \rangle)$ is a degenerate geometric vertex decomposition since $C=\langle 1\rangle$.
By the discussion in \cite{KleRaj} before \cite[Proposition 2.4]{KleRaj}, this implies that there is a unique element in the reduced Gr\"obner basis of $I$ of the form $uy+g$ where $u$ is a unit and $g$ does not contain $y$. In particular this means that $y$ can be written in terms of the other variables, and thus $R/I_{w_0,h}(m)$ is isomorphic to $R/(\langle y \rangle + N)$. Now observe that since $N$ is geometrically vertex decomposable by induction, it must be unmixed. Furthermore, $y$ does not divide any term of its reduced Gr\"obner basis. Therefore, if $\cap_iP_i$ is a primary decomposition of $N$, then $\cap_i(P_i+\langle y\rangle)$ is a primary decomposition of $N+\langle y \rangle$. But in this case the dimension conditions for unmixedness remain true, so $N+\langle y\rangle$ is also unmixed. Therefore, $I_{w_0,h}(m)$ is unmixed as well.

 We take a moment to note that, since $h$ is indecomposable, the pairs $(k,\ell)$ that correspond to potential generators $f_{k,\ell}^{w_0}$ for the ideals $I_{w_0,h}(m)$ for any value of $m$ must satisfy $k>\ell+1$. Hence for a given value of $n$ (with $n \geq 3$), the values of $r$ that can occur -- for any value of $m$ -- have an a priori upper bound of $n(n-1)/2$. Therefore, we may proceed by showing that the claim of the theorem holds for any allowed value of $r$ for both $n=3$ and $n=4$, and then induct on both the value of $n$ and on $r$. In particular, in the argument that follows, we can assume that the claim is true for $n-1$ and for any allowed value of $m$ for $n-1$. 

Let us now consider the remaining case, when $m=\mu(h)$. In this case, 
the ideal $I_{w_0,h}(m)=I_{w_0,h}(\mu(h))$ does not contain any generators of the form $f^{w_0}_{n,\ell}$ for any $\ell$. By Lemma~\ref{lemma: relabeling}, we know that the generators of $I_{w_0,h}(m=\mu(h))$ are precisely the images under the map $\varphi_{n-1,n}: \CC[\mathbf{x}_{\overline{w_0}}] \to \CC[\mathbf{x}_{w_0}]$ of the analogous generators $f^{\overline{w_0}}_{k,\ell}$ of the ideal $I_{\overline{w_0}, \bar{h}}$. Note that $I_{\overline{w_0}, \bar{h}}$ is a special case of an ideal of the form $I_{\overline{w_0}, \bar{h}}(m)$, and it is associated to a smaller value of $n$, since $\bar{h}$ is a Hessenberg function on $[n-1]$, not $[n]$. Thus, by the induction hypothesis on $n$, we may assume that $I_{\overline{w_0}, \bar{h}}$ is geometrically vertex decomposable in $\CC[\mathbf{x}_{\overline{w_0}}]$.
Now, by applying Theorem~\ref{theorem: extension GVDs} to the case $I=I_{\overline{w_0}, \bar{h}}$ and $J=0$, where $R=\CC[\mathbf{x}_{\overline{w_0}}]$ and $S$ is the polynomial ring generated by $\mathbf{x}_{w_0} \setminus \varphi_{n-1,n}(\mathbf{x}_{\overline{w_0}})$,  we may conclude that the ideal $I_{w_0,h}(\mu(h))$ is also geometrically vertex decomposable in $\CC[\mathbf{x}_{w_0}]$. 
This completes the induction step and hence the proof. 
\end{proof}

Since the ideals $I_{w_0,h}$ are special cases of the ideals $I_{w_0,h}(m)$, the following is immediate.

\begin{corollary}\label{corollary: GVD_main}
Let $n$ be a positive integer with $n \geq 3$. Let $h: [n] \to [n]$ be an indecomposable Hessenberg function. 
Then the Hessenberg patch ideal $I_{w_0,h}$ of $\hess(\mathsf{N},h)$ in the $w_0$-chart is geometrically vertex decomposable.
\end{corollary}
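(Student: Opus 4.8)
The plan is to derive this corollary as an immediate specialization of Theorem~\ref{GVD_main}, after disposing of the one degenerate case excluded by the standing hypotheses of Section~\ref{subsec: induction liaison}. First I would treat the case $h = (n, n, \ldots, n)$, for which $\hess(\mathsf{N},h) = \flag(\mathbb{C}^n)$: here there are no pairs $(k,\ell)$ with $k > h(\ell)$, so $I_{w_0,h} = \langle 0 \rangle$ is the zero ideal; being generated by the empty list of indeterminates (and trivially unmixed), it is geometrically vertex decomposable by case (1) of Definition~\ref{def: g v decomposable}.

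Next, assuming $h \neq (n,n,\ldots,n)$, there is some $L$ with $h(L) < n$, so the integer $\mu(h) = \max\{L \mid h(L) < n\}$ of \eqref{eq: def mu} is well-defined and nonnegative, and all the hypotheses of Section~\ref{subsec: induction liaison} are in force. Taking $m = 0$ in the definitions \eqref{eq: Hhk} and \eqref{eq: Iw0hk}, the constraint $\ell > m$ becomes vacuous, so $H(h,0) = \{(k,\ell) \mid h(\ell) < k \leq n\}$ and hence $I_{w_0,h}(0) = I_{w_0,h}$, as already observed immediately following \eqref{eq: Iw0hk}. Since $0 \leq 0 \leq \mu(h)$, Theorem~\ref{GVD_main} applied with $m = 0$ gives that $I_{w_0,h}$ is geometrically vertex decomposable, completing the argument.

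There is essentially no obstacle at the level of the corollary itself: it is a one-line consequence of the theorem. All of the real work -- the double induction on $n$ and on $r = \lvert H(h,m) \rvert$, the verification of the hypotheses of Theorem~\ref{theorem: Klein Rajchgot} at each stage (the $R/N$-module isomorphism given by multiplication by $-f^{w_0}_{n,m+1}$, square-freeness in the chosen variable $y = x_{1,m+2}$, and the bookkeeping showing that $N + \langle y \rangle$, and hence $I_{w_0,h}(m)$, is unmixed), together with the reduction to the $(n-1)$-case via Lemma~\ref{lemma: relabeling} and Theorem~\ref{theorem: extension GVDs} in the base case $m = \mu(h)$ -- has already been carried out in the proof of Theorem~\ref{GVD_main}.
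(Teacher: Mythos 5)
Your proof is correct and takes essentially the same approach as the paper, which likewise deduces the corollary directly from Theorem~\ref{GVD_main} via the observation that $I_{w_0,h} = I_{w_0,h}(0)$. You are in fact slightly more careful than the paper in explicitly handling the trivial case $h = (n,n,\ldots,n)$, which is excluded by the standing hypothesis of Section~\ref{subsec: induction liaison} (so that $\mu(h)$ is well-defined) yet falls under the corollary's stated hypotheses; your disposal of it via case (1) of Definition~\ref{def: g v decomposable} is the right fix.
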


\begin{example}\label{example: demo module iso}

Continuing with Example~\ref{n5_example}, we take $n=5, h=(2,3,4,5,5)$ and $w_0=[5 \, 4 \, 3 \, 2 \, 1]$. Set $R=\CC[\mathbf{x}_{w_0}]$. We outline in more detail how our proof above shows that $I_{w_0,h}=I_{w_0,h}(0)$ is geometrically vertex decomposable, assuming that we have shown geometric vertex decomposability for the $n=4$ cases.
We begin the explanation by constructing a geometric vertex decomposition for $I_{w_0,h}(2)$. The matrix  $(w_0M)^{-1}\mathsf{N}(w_0M)$ 
is of the form 
\[
  \begin{bmatrix}
    0 & 0 & 0 & 0 & 0\\
    1 & 0 & 0 & 0 & 0\\
    f^{w_0}_{3,1} & 1 & 0 & 0 & 0\\
    f^{w_0}_{4,1} & f^{w_0}_{4,2} & 1  &  0 &  0 \\
    * & * & f^{w_0}_{5,3} & 1 & 0 \\
  \end{bmatrix}
\]
\noindent where
\begin{align*}
    & f^{w_0}_{5,3} = -x_{1,4}+x_{2,3}\\
    & f^{w_0}_{4,1} = -x_{2,2}+x_{2,3}(x_{3,2}-x_{4,1})+x_{3,1}\\
    & f^{w_0}_{4,2} =-x_{2,3}+x_{3,2}\\
    & f^{w_0}_{3,1} =-x_{3,2}+x_{4,1}. 
\end{align*}
and by definition we have $I_{w_0,h}(2) =\langle f^{w_0}_{3,1},f^{w_0}_{4,1},f^{w_0}_{4,2},f^{w_0}_{5,3}\rangle$. Following the inductive procedure suggested by the above discussion and the proof of Theorem~\ref{GVD_main}, we start by taking the initial term of $-f^{w_0}_{5,3}$ which is $y_2 :=x_{1,4}$ and we define the ideals \[N_2 := I_{w_0,h}(3) :=  \langle f^{w_0}_{3,1},f^{w_0}_{4,1},f^{w_0}_{4,2}\rangle \, \textup{ and } \]
\[ C_2 := \langle 1 \rangle.\]

\noindent Since $C_2=\langle 1 \rangle = \CC[\mathbf{x}_{w_0}]$ and hence $I_{w_0,h}(2) \cap C_2 = I_{w_0,h}(2)$, it immediately follows that $N_2\subset I_{w_0,h}(2)\cap C_2$.  Moreover, $I_{w_0,h}(2)$ is square-free in $y_2$, and no term of the reduced Gr\"obner basis for $N_2$ with respect to $<_5$ is divisible by $y_2$. Next, observe that \[I_{w_0,h}(2)/N_2 \cong f^{w_0}_{5,3}R/N_2 \] 
since the only generator of $I_{w_0,h}(2)$ not contained in $N_2$ is $f^{w_0}_{5,3}$. It is also free (of rank $1$) as an $R/N_2$-module since there is a term in $f^{w_0}_{5,3}$ which contains the variable $y_2$, and thus (the equivalence class of) $f^{w_0}_{5,3}$ is not a zero-divisor in $R/N_2$. Since
$C_2=R$, we clearly have $C_2/N_2 = R/N_2$, so there exists an isomorphism $C_2/N_2\rightarrow I_{w_0,h}(2)/N_2 \cong f^{w_0}_{5,3}R/N_2$ given by multiplication by $-f^{w_0}_{5,3}$. Since $\init_{<_5}(-f^{w_0}_{5,3}) = y_2$, we can now apply Theorem  \ref{theorem: Klein Rajchgot} to conclude that \[\init_{y_2}(I_{w_0,h}(2)) = C_2\cap(N_2 +\langle y_2 \rangle)\] defines a geometric vertex decomposition of $I_{w_0,h}(2)$.

To see that $I_{w_0,h}(2)$ is geometrically vertex decomposable, we must show next that the contractions of $C_2$ and $N_2$ to $\CC[\mathbf{x}_{w_0} \setminus y_2] = \CC[\mathbf{x}_{w_0} \setminus \{x_{1,4}\}]$ are both geometrically vertex decomposable. Since $C_2=\langle 1 \rangle = R$, it follows that its contraction is also the unit ideal, so it is geometrically vertex decomposable. We now observe that, upon changing variable labels as explained in Example~\ref{injection_example}, $N_2=I_{w_0,h}(3)$ can be interpreted as the Hessenberg patch ideal of the regular nilpotent Hessenberg variety with  $\bar{h}=(2,3,4,4)$ in the $\overline{w_0}$-chart where $n=4$. Interpreted in this way, $N_2$ is geometrically vertex decomposable by induction on $n$. Applying Theorem~\ref{theorem: extension GVDs} to $I=N_2$ (interpreted in $\CC[\mathbf{x}_{\overline{w_0}}]$) and $J=0$ as in the proof of Theorem~\ref{GVD_main}, we see that $N_2$ (interpreted in $\CC[\mathbf{x}_{w_0}])$ is geometrically vertex decomposable. From this we may conclude that $I_{w_0,h}(2)$ is geometrically vertex decomposable. 

We may now repeat this process as follows to show that $I_{w_0,h}(1)$ is geometrically vertex decomposable. Namely, we may choose $y_1 = x_{1,3}$ and $N_1=I_{w_0,h}(2)$ and $C_1=\langle 1\rangle$. By analogous arguments, $\init_{y_1}(I_{w_0,h}(1)) = C_1 \cap (N_1 + \langle y_1 \rangle)$, and $C_1$ (being the unit ideal) is geometrically vertex decomposable. Also, we just showed $N_1$ is geometrically vertex decomposable. 

Finally, following the same procedure, we can see that $I_{w_0,h} = I_{w_0,h}(0)$ is geometrically vertex decomposable by taking $y_0=x_{1,2}$, $N_0 = I_{w_0,h}(1)$ and $C_0=\langle 1\rangle$.
\end{example}

\bigskip

We have just seen that the ideals $I_{w_0,h}$ are GVD and gone through a specific example. Lemma~\ref{lemma: GVD to GB} now allows us to conclude that we have a Gr\"obner basis. 

\begin{theorem}\label{Hess_Grobner}
Let $n \geq 3$ be a positive integer, and let $h: [n] \to [n]$ be an indecomposable Hessenberg function. 
Then the set of elements $f^{w_0}_{k,\ell}$, which generate the Hessenberg patch ideal $I_{w_0,h}$ of $\hess(\mathsf{N},h)$ in the $w_0$-chart, form a Gr\"obner basis for $I_{w_0,h}$ with respect to the monomial order $<_n$. Moreover,  $\init_{<_n}(I_{w_0,h})$ is the ideal of indeterminates given by  \[\init_{<_n}(I_{w_0,h}) = \langle x_{n-i+1,j+1} |(i,j)\in H(h,0)\rangle. \]
\end{theorem}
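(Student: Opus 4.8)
The plan is to extract the Gr\"obner basis statement from the geometric vertex decomposition machinery that has already been set up, using Lemma~\ref{lemma: GVD to GB} as the inductive engine. First I would observe that the ideals $I_{w_0,h}(m)$ are homogeneous with respect to the positive nonstandard $\mathbb{Z}$-grading on $\CC[\mathbf{x}_{w_0}]$ of Section~\ref{subsec: nonstandard} (Lemma~\ref{homogeneous}), so that the homogeneity hypotheses of Lemma~\ref{non_standard_grading} and Lemma~\ref{lemma: GVD to GB} are satisfied. The induction will mirror exactly the structure of the proof of Theorem~\ref{GVD_main}: a double induction on $n \geq 3$ and on $r = |H(h,m)|$, where at each stage I claim that $\{f^{w_0}_{k,\ell} \mid (k,\ell) \in H(h,m)\}$ is a Gr\"obner basis for $I_{w_0,h}(m)$ with respect to $<_n$, and that $\init_{<_n}(I_{w_0,h}(m)) = \langle x_{n-i+1,j+1} \mid (i,j) \in H(h,m)\rangle$.

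For the base case $r=1$, the ideal is principal and a single polynomial is trivially its own reduced Gr\"obner basis; the claim about the initial ideal is immediate from Lemma~\ref{lemma: initial term and one variable}(1). For the inductive step with $m \leq \mu(h)-1$, I would reuse the geometric vertex decomposition constructed in the proof of Theorem~\ref{GVD_main}: $y = x_{1,m+2}$, $C = \langle 1 \rangle$, $N = I_{w_0,h}(m+1)$, with the module isomorphism $C/N \to I/N$ given by multiplication by $-f^{w_0}_{n,m+1}$, which shifts degrees by $\deg(y) = \deg(f^{w_0}_{n,m+1})$ in the nonstandard grading (this is where Lemma~\ref{homogeneous} is used). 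By the induction hypothesis on $r$, $\{f^{w_0}_{k,\ell} \mid (k,\ell) \in H(h,m+1)\}$ is a Gr\"obner basis for $N$; since $C = \langle 1\rangle$ has the empty Gr\"obner basis $\{\}$ together with the relation $yq_i + r_i$ coming from the single extra generator, I would apply Lemma~\ref{lemma: GVD to GB} with $k=1$, $q_1 = 1$ (the generator of $C/N$), $r_1$ the $y$-free part of $-f^{w_0}_{n,m+1}$, and $\{h_1,\ldots,h_\ell\}$ the Gr\"obner basis of $N$. The hypotheses of Lemma~\ref{lemma: GVD to GB} — $y$-compatibility of $<_n$ (which holds since $\init_{<_n}(-f^{w_0}_{n,m+1}) = y$ and $<_n$ is lexicographic with $y$ large in its block), square-freeness of $I$ in $y$, and $y$ not dividing any term of the reduced Gr\"obner basis of $N$ — were all verified inside the proof of Theorem~\ref{GVD_main}. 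The conclusion is that $\{-f^{w_0}_{n,m+1}\} \cup \{f^{w_0}_{k,\ell} \mid (k,\ell)\in H(h,m+1)\}$ is a Gr\"obner basis for $I_{w_0,h}(m)$, and then $\init_{<_n}(I_{w_0,h}(m)) = \langle y \rangle + \init_{<_n}(N) = \langle x_{1,m+2}\rangle + \langle x_{n-i+1,j+1}\mid (i,j)\in H(h,m+1)\rangle = \langle x_{n-i+1,j+1}\mid (i,j)\in H(h,m)\rangle$, using that $H(h,m) = H(h,m+1)\sqcup\{(n,m+1)\}$ and $n-n+1 = 1$, $(m+1)+1 = m+2$.

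For the remaining inductive step $m = \mu(h)$, the ideal $I_{w_0,h}(\mu(h))$ contains no generator with first index $n$, and by Lemma~\ref{lemma: relabeling} its generators are the images under $\varphi_{n-1,n}\colon x_{i,j}\mapsto x_{i+1,j}$ of the generators of $I_{\overline{w_0},\bar h}$ in $\CC[\mathbf{x}_{\overline{w_0}}]$, a Hessenberg patch ideal for $n-1$. Here I would check that $\varphi_{n-1,n}$ carries the monomial order $<_{n-1}$ on $\CC[\mathbf{x}_{\overline{w_0}}]$ to the restriction of $<_n$ to the subring $\varphi_{n-1,n}(\CC[\mathbf{x}_{\overline{w_0}}]) \subseteq \CC[\mathbf{x}_{w_0}]$ — this is a direct comparison of the two orderings of variables in Definition~\ref{def: order} — and that the free variables $\mathbf{x}_{w_0}\setminus\varphi_{n-1,n}(\mathbf{x}_{\overline{w_0}})$ (the top row $x_{1,j}$) do not appear in any generator of $I_{w_0,h}(\mu(h))$. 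By the induction hypothesis on $n$, the $f^{\overline{w_0}}_{k,\ell}$ form a Gr\"obner basis for $I_{\overline{w_0},\bar h}$ with the stated initial ideal; since a Gr\"obner basis of an ideal generated in a polynomial subring (in disjoint variables) remains a Gr\"obner basis in the larger ring, the $f^{w_0}_{k,\ell}$ form a Gr\"obner basis for $I_{w_0,h}(\mu(h))$, and applying $\varphi_{n-1,n}$ to the initial-ideal formula (which shifts $x_{n-1-i+1,j+1}$ to $x_{n-i+1,j+1}$) gives exactly $\langle x_{n-i+1,j+1}\mid (i,j)\in H(h,\mu(h))\rangle$. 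Taking $m=0$ in the conclusion and noting $I_{w_0,h} = I_{w_0,h}(0)$ finishes the proof.

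The main obstacle I anticipate is the careful bookkeeping in applying Lemma~\ref{lemma: GVD to GB} in the $m \leq \mu(h)-1$ case: one must be certain that the module isomorphism $\varphi$ genuinely respects the nonstandard grading (so that it shifts degrees by $\deg(y)$ as required by the hypothesis of Lemma~\ref{lemma: GVD to GB}, not merely that it exists as in Theorem~\ref{theorem: Klein Rajchgot}), and that the single extra Gr\"obner basis element produced really is $yq_1 + r_1$ in the normal form demanded by the lemma, with $\init_{<_n}(yq_1+r_1) = y\cdot\init_{<_n}(q_1) = y$. A secondary subtlety, in the $m=\mu(h)$ case, is confirming that $<_n$ restricts correctly under $\varphi_{n-1,n}$; this is routine but must be stated, since the relabeling $x_{i,j}\mapsto x_{i+1,j}$ only obviously preserves the ordering within a fixed row-block, and one needs the full lexicographic comparison to descend.
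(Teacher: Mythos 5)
Your proposal is correct and follows the same approach as the paper: invoke the double induction from Theorem~\ref{GVD_main} and apply Lemma~\ref{lemma: GVD to GB} at each step. In fact, your writeup is more careful than the paper's one-line proof on the very points you flag as potential obstacles — the paper asserts the hypotheses of Lemma~\ref{lemma: GVD to GB} were already checked in the proof of Theorem~\ref{GVD_main}, but that proof only verified the hypotheses of Theorem~\ref{theorem: Klein Rajchgot}; the extra graded-isomorphism hypothesis (that the map $C/N \to I/N$ shifts degrees by exactly $\deg(y)$) and the compatibility of $<_{n-1}$ with $<_n$ under $\varphi_{n-1,n}$ in the $m=\mu(h)$ case are indeed needed and you correctly supply them from Lemma~\ref{homogeneous} and Definition~\ref{def: order}.
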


\begin{proof}
In the proof of Theorem~\ref{GVD_main} we have checked all the hypotheses of Lemma \ref{lemma: GVD to GB}. Hence by Theorem~\ref{GVD_main} and Lemma~\ref{lemma: GVD to GB}, we may conclude that $f^{w_0}_{k,\ell}\in I_{w_0,h}$ define a Gr\"obner basis with respect to $<_n$.  The second statement follows from the fact that $\init_{<_n}(f^{w_0}_{k,\ell}) = - x_{n-k+1,\ell+1}$.
\end{proof}

\end{document}